\numberwithin{equation}{section}
\providecommand{\U}[1]{\protect\rule{.1in}{.1in}}
\providecommand{\U}[1]{\protect \rule{.1in}{.1in}}
\newtheorem{theorem}{Theorem}[section]
\newtheorem{lemma}[theorem]{Lemma}
\newtheorem{proposition}[theorem]{Proposition}
\newtheorem{remark}[theorem]{Remark}
\newenvironment{proof}[1][Proof]{\noindent \textbf{#1.} }{\  \rule{0.5em}{0.5em}}
\DeclareMathOperator*{\esssup}{ess\,sup}
\DeclareMathOperator*{\essinf}{ess\,inf}
\def \E{\mathsf{E}}
\def \P{\mathsf{P}}
\def \R{\mathbb{R}}
\def\d{\mathrm{d}}
\begin{document}
	\title{Optimal Consumption with Intertemporal Substitution under Knightian Uncertainty}
	\author{ Giorgio Ferrari\thanks{Center for Mathematical Economics, Bielefeld University, giorgio.ferrari@uni-bielefeld.de.}
		\and	Hanwu Li\thanks{Center for Mathematical Economics, Bielefeld University,		hanwu.li@uni-bielefeld.de.}
		\and Frank Riedel \thanks{Center for Mathematical Economics, Bielefeld University,		frank.riedel@uni-bielefeld.de.}}
\date{\today}
	\maketitle
	
\begin{abstract}
We study an intertemporal consumption and portfolio choice problem under Knightian uncertainty   in which agent's preferences exhibit local intertemporal substitution. We also allow for market frictions in the sense that the pricing functional is  nonlinear. We prove existence and uniqueness of the optimal consumption plan,  and we derive a set of sufficient first-order conditions for optimality. With the help of a backward equation, we are able to determine the structure of   optimal consumption plans. We  obtain explicit solutions  in a   stationary setting in which the  financial market has  different risk premia for short and long positions.
\end{abstract}

\textbf{Key words}: Hindy-Huang-Kreps preferences; Knightian uncertainty; $g$-expectation; ambiguity aversion; singular stochastic control

\textbf{MSC classification}: 93E20, 91B42, 60H30, 65C30.

\textbf{JEL classification}: C61, D11, D81, G11.


\section{Introduction}
\label{sec:intro}

In the seminal papers \cite{HH0, HHK}, a fundamental critique to the standard time-additive framework for optimal consumption problems was raised. A.\ Hindy, C.F.\ Huang and D.\ Kreps noticed that time-additive utility functionals, and more generally all utility functionals directly depending on the rate of consumption, are not robust with respect to slight shifts of consumption in time. To overcome such an undesired feature,  these authors  proposed to measure utility arising from consumption through the so-called level of satisfaction, a suitably  weighted average of past consumption\footnote{This represents only one of many possible ways in which to capture effects of intertemporal substitution of preferences and/or habit formation. We refer the reader to \cite{EnglezosK}, \cite{Riedel}, and \cite{Watson} and references therein.} With such a preference the agent measures her felicity by taking into account also the history of her consumption plan and considers consumption at adjacent times as similar alternatives. As a consequence, the agent consumes not necessarily in rates, but also in a singular manner or in gulps.

In \cite{HH} the authors consider an investment-consumption model for an agent who faces Hindy-Huang-Kreps (HHK) preferences and invests in a financial market without transaction costs. When the agent has an hyperbolic absolute risk aversion (HARA) instantaneous utility function and is active in a Black-Scholes market, explicit consumption and allocation choices have been derived. The case of general utility functions was then studied in \cite{Alvarez}, and later extended also to markets with jumps (and possibly transaction costs) in \cite{Benth1, Benth2, Benth3} and \cite{BR}. While \cite{Benth1, Benth2, Benth3} employ a dynamic programming approach to solve the considered optimal consumption problem, no Markovian structure is needed in \cite{BR}. The approach followed in \cite{BR} (see also \cite{BR00} for the deterministic case and \cite{BaKau} for a general convex-analytic treatment) indeed exploits the concave structure of the consumption problem and derives suitable necessary and sufficient Kuhn-Tucker-like first-order conditions for optimality (FOCs). Via such an alternative method, it shown that the optimal consumption plan is such to keep the level of satisfaction always above an endogenously determined minimal level solving a backward stochastic equation arising from the FOCs (see \cite{BE} for the study of such a class of equations).

In \cite{BR} (and actually also in all the other aforementioned references) there is the implicit assumption that, when picking the consumption plan maximizing the expected utility, the agent has complete knowledge about the probability distribution of all those random factors that affect her choice. Such an assumption can be clearly debatable in situations in which the agent is exposed to new phenomena affecting her preferences and for which there are not sufficient data available to conjecture the probability distribution with good confidence. Moreover, the characterization and construction of the optimal consumption plan is made in \cite{BR} under the assumption that the underlying financial market is frictionless. In this paper we consider the optimal consumption problem of an agent with preferences of HHK type, that faces Knightian uncertainty about the random factors affecting her utility from consumption, and that evaluates the costs of her consumption under a nonlinear expectation. In particular, such a nonlinear evaluation of the consumption's costs can be motivated by thinking that the agent finances her consumption plan in a financial market with frictions (see Section \ref{sec:explicit solution} for more details). We show that such a setting can be well encompassed by considering nonlinear expectations for the evaluation of utility/cost in the form of $g$-expectations. The theory of the latter was initiated by S.\ Peng in \cite{P97} and they are shown to be naturally related to backward stochastic differential equations (BSDEs) and to variational preferences. As a matter of fact, the $g$-expectation $\mathcal{E}^g[\,\cdot\,]$ is defined in terms of the first component of a solution to a BSDE with driver $g$, and can be represented as a variational expectation when $g$ is concave (in the second component of the BSDE's solution).

Following the arguments already developed in \cite{BR} in the linear case, we also show that existence of an optimal consumption plan can be obtained in our nonlinear setting by a suitable application of Koml\'os' theorem (see \cite{K67}) in the version of Y.\ Kabanov \cite{K99}. Moreover, if the felicity function $u$ is strictly concave in the level of satisfaction, also uniqueness of the optimal consumption plan can be established by exploiting the fact that the $g$-expectation satisfies the strict comparison property.

With the aim of providing a characterization of the optimal consumption plan, we then derive a set of sufficient FOCs for optimality. These are clearly different from those obtained in \cite{BR} where it is assumed that the underlying financial market is complete, and in fact degenerate to those if we take $g=0$. In our framework, the FOCs involve the so-called worst-case-scenarios, probability measure $\P_i$ with Girsanov kernel $\xi^i$ (with respect to the given reference probability measure $\P_0$) under which the lowest expected utility and the largest expected cost of consumption are realized. The sufficient FOCs can be used as a verification tool in order to check the actual optimality of a candidate optimal solution. In this sense our approach might then be seen a counterpart in our general not necessarily Markovian and nonlinear setting of the verification theorem usually employed in the study of Markovian optimal control problems addressed via the dynamic programming approach. Inspired by \cite{BR}, we then show that, for any given and fixed probability measures $\P_1$ and $\P_2$ under which expected utility and consumption's cost are evaluated, there exists a minimal level of satisfaction $L^{\P_1,\P_2}$ that solves a certain kind of backward equation studied in \cite{BE}. The consumption plan that tracks $L^{\P_1,\P_2}$ -- denoted by $C^{L^{\P_1,\P_2}}$ -- prescribes to consume just enough in order to keep the satisfaction's level above $L^{\P_1,\P_2}$ at all times. However, in order to find the optimal consumption a daunting fixed point problem has to be solved; in fact, $\P_1$ and $\P_2$ should be chosen in such a way that they realize the expected lowest utility largest expected cost of consumption.

In the generality of our framework, the complete study of such a fixed point problem would require a separate detailed analysis. This is why in a final section of our paper we consider a specific, yet relevant, setting in which a complete solution to the problem can be obtained by guessing, and then verifying, through the FOCs, the worst-case scenarios for utility and cost. We assume that the utility is of power-type, the time horizon is infinite, and that the set of multiple priors are constituted by measures that are equivalent to the given reference measure $\P_0$ and have Girsanov kernel belonging to suitable bounded intervals. We show that such a choice of the set of priors corresponds to the case in which the consumption plan is financed via investment in an underlying financial market in which the risk premia for short and long positions are different. Within this setting we provide an explicit form of the optimal consumption plan and we also determine the financing portfolio. Like in the classical Merton's problem, this prescribes to invest a fixed (in time) fraction of wealth in the risky asset traded in the financial market. Moreover, we observe that an increase in the risk and risk aversion leads the agent to invest less in the financial market. On the other hand, an increase of the discrepancy between the agent's beliefs about the evaluations of utility and cost from consumption has a non definite effect on the portfolio strategy which can either increase or decrease depending on the model's parameters.  Finally, we show that in the case in which the multiple priors for utility and cost have a common element, then the optimal minimal level of satisfaction is deterministic and, as a consequence, the agent will not invest in the risky asset at all.

In a series of remarks throughout the paper we also show how our results can be generalized to the case in which the evaluations of utility and cost of consumption are not made via $g$-expectations but through variational preferences induced by appropriate multiple priors and penalty functions.  In this framework, existence of an optimal consumption plan and the sufficient FOCs still hold.

The paper is organized as follows. In Section \ref{sec:problem} we formulate the utility maximization problem under Knightian uncertainty. We establish in Section \ref{sec:existence} existence and uniqueness of the optimal consumption plan, and we provide in Section \ref{FOCs} the sufficient FOCs for optimality. Section \ref{sec:structure} shows the time-consistency property of the optimal consumption plan and gives its general structure. The explicit solution in a specific setting is obtained in Section \ref{sec:explicit solution}
, while definition and properties of $g$-expectation as well as technical results are presented in \ref{app} and \ref{appB}, respectively.


\section{The Knightian intertemporal utility maximization problem}
\label{sec:problem}

Consider a filtered probability space $(\Omega,\mathcal{F}_T,(\mathcal{F}_t)_{t\in[0,T]},\P_0)$ satisfying the usual conditions of right-continuity and completeness and in which $B=\{B_t\}_{t\in[0,T]}$ is a $d$-dimensional Brownian motion. Throughout this paper, $\E[\,\cdot\,]$ will denote the expectation taken under the probability $\P_0$, and measurability properties (like progressive measurability or adaptedness) will be always with respect to $(\mathcal{F}_t)_{t\in[0,T]}$, as otherwise stated.

We aim at studying the optimal consumption choice of an agent facing Knightian ambiguity and whose preferences are of the Hindy-Huang-Kreps (HHK) type. We assume that the agent is ambiguity adverse and also that her consumption is financed via investment in a financial market which possibly exhibits frictions. As it will be clear in the sequel (see the setting of Section \ref{sec:explicit solution}), those agent's and market's characteristics can be well modeled through the use of nonlinear expectations; namely, via the $g$-expectation $\mathcal{E}^g[\,\cdot\,]$ initiated by S.\ Peng in \cite{P97}. As it is discussed in detail in \ref{app}, this is formally defined as the first component of the solution to a backward stochastic differential equation (BSDE) with driver $g:\Omega\times[0,T]\times \mathbb{R}^d\rightarrow\mathbb{R}$ satisfying the following requirements:
\begin{description}
	\item[(A1)] For any $z\in\mathbb{R}^d$, $g(\cdot,\cdot,z)$ is progressively measurable and
	\begin{displaymath}
	\E\bigg[\int_0^T |g(t,z)|^2dt\bigg]<\infty;
	\end{displaymath}
	
	\item[(A2)] There exists some constant $\kappa>0$, such that for any $(\omega,t)\in \Omega\times[0,T]$ and $z,z'\in\mathbb{R}^d$,
	\begin{displaymath}
	|g(\omega,t,z)-g(\omega,t,z')|\leq \kappa|z-z'|;
	\end{displaymath}
	
	\item[(A3)] $g(\omega, t, \cdot)$ is concave for any $(\omega,t)\in\Omega\times [0,T]$;
	
    \item[(A4)] $g(\omega,t,0)=0$ for any $(\omega,t)\in\Omega\times [0,T]$.
\end{description}

The Knightian intertemporal optimal consumption problem under study is defined as follows. Introduce the set
\begin{align*}
\mathcal{X}:=\big\{C\,\big|\,& C \textrm{ is the distribution function of a nonnegative optional random measure on $[0,T]$}\big\},
\end{align*}
let $r=\{r_t\}_{t\in[0,T]}$ be a bounded, progressively measurable process, and $g$ and $h$ be two functions satisfying (A1)-(A4). An agent with initial wealth $w>0$ will pick a consumption plan from the budget feasible set
\begin{equation}
\label{eq:setAhw}
\mathcal{A}_h(w):=\Big\{C\in\mathcal{X}\,|\,\Psi(C):=\widetilde{\mathcal{E}}^h\bigg[\int_0^T \gamma_t\d C_t\bigg]\leq w\Big\},
\end{equation}
where $\gamma_t:=\exp{(-\int_0^t r_sds)}$ and $\widetilde{\mathcal{E}}^h[\,\cdot\,]=-\mathcal{E}^h[\,-\cdot\,]=\mathcal{E}^{\widetilde{h}}[\,\cdot\,]$ (we define $\widetilde{h}(t,z):=-h(t,-z)$). It is easy to check that $\mathcal{A}_h(w)$ is nonempty when $h$ satisfies (A1)-(A4) since $\widetilde{\mathcal{E}}^h[0]=0$. Moreover,
 $\Psi(C)$ can be seen as the minimal initial capital needed to finance a consumption plan $C$ via investing in a financial market.

For a fixed consumption plan $C \in \mathcal{X}$, the agent's level of satisfaction is given by
\begin{equation}
\label{eq:Y}
Y_t^C=\eta_t+\int_0^t \theta_{t,s}\d C_s,
\end{equation}
where $\eta:[0,T]\rightarrow\mathbb{R}_+$ and $\theta:[0,T]^2\rightarrow\mathbb{R}_+$ are continuous. The quantity $\theta_{t,s}$ can be seen as the weight assigned at time $t$ to consumption made at time $s\leq t$ and $\eta_t$ describes the exogenous level of satisfaction that the agent has at time $t$. In \eqref{eq:Y}, and in the following, we interpret the integrals with respect to the optional random measure $\d C$ on $[0,T]$ in the Lebesgue-Stieltjes sense as $\int_0^{\cdot} (\,\cdot\,)\d C_t = \int_{[0,\cdot]} (\,\cdot\,)\d C_t$. In such a way, a possible initial jump of the process $C$ (i.e.\ an initial consumption gulp) is taken into account in the integral.

\begin{remark}
\label{oc17}
Typical $\eta$ and $\theta$ are $\eta_t:=\eta e^{-\beta t}$ and $\theta_{t,s}=\beta e^{-\beta(t-s)}$ with constants $\eta\geq 0,\beta>0$.
\end{remark}

We assume that the agent's utility is of the HHK type and depends on the current level of satisfaction (hence on the past consumption as well) via a certain instantaneous felicity function $u:\Omega \times [0,T]\times\mathbb{R}_+\rightarrow\mathbb{R}$. In particular, we have
\begin{displaymath}
U(C)=\int_0^T u(t,Y_t^C)\d t,
\end{displaymath}
where $u(\omega, \cdot,\cdot)$ is continuous, $u(\omega, t,\cdot)$ is increasing and concave and such that, for any $y \in \R_+$, $(\omega,t) \mapsto u(\omega,t,y)$ is progressively measurable. The goal of the ambiguity's adverse agent is then that of maximizing her expected utility over all budget feasible consumption plans; that is, of finding
\begin{equation}
\label{oc1}
v_{g,h}(w):=\sup_{C\in\mathcal{A}_h(w)}V(C)=\sup_{C\in\mathcal{A}_h(w)}\mathcal{E}^{g}[U(C)].
\end{equation}
In order to simplify notation, in the following we shall omit the subscripts $g,h$ in $v_{g,h}$, as well as its dependence on $w$.



\section{Existence and uniqueness of the optimal consumption plan}
\label{sec:existence}

In this section, we prove existence and uniqueness of a consumption plan solving problem \eqref{oc1}.  For this purpose, we need the following assumption on the budget feasible set.
\begin{description}
	\item[(H1)] The family of budget feasible utilities $\{U(C), C\in\mathcal{A}_h(w)\}$ is uniformly $\P_0$-square-integrable.
\end{description}

Notice that thanks to Assumption (H1) and a priori estimates for BSDEs (cf.\ Proposition 2.1 in  \cite{EPQ}), one has
	\begin{displaymath}
	v =\sup_{C\in\mathcal{A}_h(w)}V(C)<\infty.
	\end{displaymath}

Before moving on with the existence and uniqueness result, we recall the following technical lemma from \cite{BR}.
\begin{lemma}[\cite{BR}, Lemma 2.5]
\label{ocl1}
	\begin{description}
		\item[(i)] There exists some constant $K > 0$, such that for any $C\in\mathcal{X}$ and $t\in[0,T]$,
		\begin{displaymath}
		Y_t^C\leq K(1+C_t);
		\end{displaymath}
		\item[(ii)] If $\{C^n\}_{n=1}^\infty\subset \mathcal{X}$ converges almost surely to $C\in\mathcal{X}$ in the weak topology of measures on $[0,T]$, then we have almost surely $Y_t^{C^n}\rightarrow Y_t^C$ for $t=T$ and for every point of continuity $t$ of $C$.
\end{description}
\end{lemma}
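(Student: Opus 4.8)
The plan is to prove the two items by separate arguments: (i) follows from compactness, while (ii) is a pathwise weak-convergence statement whose only real difficulty is a boundary term at the upper integration limit.

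For (i), I would use that $\eta$ and $\theta$ are continuous on the compact sets $[0,T]$ and $[0,T]^2$, hence bounded; writing $M_\eta:=\sup_{t\in[0,T]}\eta_t<\infty$ and $M_\theta:=\sup_{(t,s)\in[0,T]^2}\theta_{t,s}<\infty$, and recalling that under the convention $\int_0^\cdot=\int_{[0,\cdot]}$ the distribution function satisfies $\int_0^t\d C_s=C_t$, one gets
\begin{displaymath}
Y_t^C=\eta_t+\int_0^t\theta_{t,s}\,\d C_s\le M_\eta+M_\theta\,C_t\le K(1+C_t),\qquad K:=\max\{M_\eta,M_\theta\},
\end{displaymath}
which is exactly (i).

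For (ii), I would fix $\omega$ in the full-measure event on which $\d C^n\to\d C$ in the weak topology of finite measures on $[0,T]$ and argue for this fixed path. Since $\eta_t$ does not depend on $C$, everything reduces to the convergence of $\int_{[0,t]}\theta_{t,s}\,\d C^n_s$. The obstacle is that, rewriting this as $\int_{[0,T]}\theta_{t,s}\mathbf 1_{[0,t]}(s)\,\d C^n_s$, the integrand $\theta_{t,\cdot}\,\mathbf 1_{[0,t]}$ is bounded but discontinuous at $s=t$, whereas weak convergence only tests against continuous functions. For $t=T$ there is nothing to do: $s\mapsto\theta_{T,s}$ is continuous and bounded, so weak convergence applies directly.

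The heart of the matter, and the step I expect to be the main obstacle, is removing the jump at $s=t$ for a continuity point $t<T$ of $C$. I would sandwich the indicator between continuous functions, choosing for small $\epsilon>0$ piecewise-linear $\phi_\epsilon^-\le\mathbf 1_{[0,t]}\le\phi_\epsilon^+$ with $\phi_\epsilon^+\equiv 1$ on $[0,t]$ and supported in $[0,t+\epsilon]$, and $\phi_\epsilon^-\equiv1$ on $[0,t-\epsilon]$ and vanishing on $[t,T]$. Using $\theta\ge0$ to preserve the inequalities,
\begin{displaymath}
\int_{[0,T]}\theta_{t,s}\phi_\epsilon^-(s)\,\d C^n_s\le\int_{[0,t]}\theta_{t,s}\,\d C^n_s\le\int_{[0,T]}\theta_{t,s}\phi_\epsilon^+(s)\,\d C^n_s.
\end{displaymath}
Letting $n\to\infty$ replaces $\d C^n$ by $\d C$ in the two outer integrals (the test functions $\theta_{t,\cdot}\phi_\epsilon^\pm$ being continuous), and then letting $\epsilon\downarrow0$ I would use monotone convergence to collapse both bounds. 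The subtlety is exactly here: the upper bound tends to $\int_{[0,t]}\theta_{t,s}\,\d C_s$ automatically, but the lower bound tends to $\int_{[0,t)}\theta_{t,s}\,\d C_s$, and it is precisely the continuity-point hypothesis $C(\{t\})=0$ that identifies the two and forces the squeeze to close, yielding $Y_t^{C^n}\to Y_t^C$. The remaining estimates are routine.
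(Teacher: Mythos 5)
Your proof is correct, but note that the paper itself does not prove this statement at all: it is imported verbatim as Lemma 2.5 of \cite{BR}, so there is no internal argument to compare against. What you have written is a correct, self-contained reconstruction. Part (i) is exactly the intended compactness argument (continuity of $\eta$ and $\theta$ on compact domains plus $\int_{[0,t]}\d C_s=C_t$). Part (ii) correctly isolates the only delicate point, namely the discontinuity of the test function $s\mapsto\theta_{t,s}\mathbf{1}_{[0,t]}(s)$ at $s=t$, and your sandwich between continuous functions, combined with $C(\{t\})=0$ at continuity points, is precisely a hands-on proof of the relevant instance of the Portmanteau theorem: since $\theta_{t,\cdot}\,\d C^n\to\theta_{t,\cdot}\,\d C$ weakly and the boundary $\{t\}$ of $[0,t]$ is null for the limit measure, the masses of $[0,t]$ converge. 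You could shorten the argument by invoking Portmanteau in that form directly, which is presumably what \cite{BR} does, but the explicit squeeze is equally valid and arguably more transparent; the case $t=T$ is, as you say, immediate because no truncation of the integrand is needed there.
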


A sufficient condition for Assumption (H1) to hold is given by the next result, whose proof can be found in \ref{appB}.

\begin{lemma}
\label{lem:suffcond}
Suppose that the function $h$ satisfies (A1)-(A4). The family of budget feasible utilities $\{U(C), C\in\mathcal{A}_h(w)\}$ is uniformly $\P_0$-square-integrable if there exists $\alpha\in(0,\frac{1}{2})$ and $K>0$ such that the felicity function $u$ satisfies the power growth condition
	\begin{displaymath}
	\sup_{t\in[0,T]}|u(t,y)|\leq K(1+y^\alpha) \textrm{ for all } y\geq 0.
	\end{displaymath}
\end{lemma}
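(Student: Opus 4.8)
The plan is to prove the stronger assertion that the family $\{U(C):C\in\mathcal{A}_h(w)\}$ is bounded in $L^p(\P_0)$ for some $p>2$; once this is shown, $\{U(C)^2\}$ is bounded in $L^{p/2}$ with $p/2>1$, and uniform $\P_0$-integrability of $\{U(C)^2\}$ (i.e.\ uniform $\P_0$-square-integrability of the family) follows from the de la Vall\'ee-Poussin criterion. To obtain the $L^p$ bound I first derive a pathwise estimate on $U(C)$. Since $C\in\mathcal{X}$ is nondecreasing, Lemma \ref{ocl1}(i) gives $Y_t^C\le K(1+C_t)\le K(1+C_T)$; combining this with the power growth of $u$ and the elementary inequality $(1+x)^\alpha\le 1+x^\alpha$ valid for $\alpha\in(0,1)$ and $x\ge0$, I get a constant $C_1>0$ with $|U(C)|\le\int_0^T|u(t,Y_t^C)|\,\d t\le C_1(1+C_T^\alpha)$. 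Because $\alpha<\tfrac12$ I may fix an exponent $p$ with $2<p<1/\alpha$, so that $p>2$ and $\delta:=p\alpha\in(0,1)$; raising the bound to the $p$-th power yields $|U(C)|^p\le C_2(1+C_T^\delta)$ for some $C_2>0$. Everything thus reduces to a uniform bound on $\E[C_T^\delta]$ with the fractional exponent $\delta\in(0,1)$.

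To control $\E[C_T^\delta]$ I would exploit the budget constraint. Set $\xi:=\int_0^T\gamma_t\,\d C_t\ge0$ and recall $\widetilde{\mathcal{E}}^h=\mathcal{E}^{\widetilde h}$ with $\widetilde h(t,z)=-h(t,-z)$, which again satisfies (A2) and (A4) with the same Lipschitz constant $\kappa$. Linearizing the driver of the defining BSDE in the standard way (writing $\widetilde h(s,\widetilde Z_s)=\langle\beta_s,\widetilde Z_s\rangle$ with $|\beta_s|\le\kappa$ through a measurable selection, using only $\widetilde h(t,0)=0$ and the Lipschitz property, cf.\ \ref{app}) and applying Girsanov's theorem, I obtain the representation $\widetilde{\mathcal{E}}^h[\xi]=\E^{\Q^\beta}[\xi]$, where $\Q^\beta$ is the equivalent measure with density $M_T^\beta=\exp\!\big(\int_0^T\beta_s\,\d B_s-\tfrac12\int_0^T|\beta_s|^2\,\d s\big)$ and $|\beta|\le\kappa$. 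Since $r$ is bounded, $\gamma_t\ge\gamma_{\min}:=e^{-\|r\|_\infty T}>0$, so $\xi\ge\gamma_{\min}C_T$, and the constraint $\Psi(C)=\widetilde{\mathcal{E}}^h[\xi]\le w$ forces $\E^{\Q^\beta}[C_T]\le w/\gamma_{\min}$. Note this yields only a \emph{first}-order bound on $C_T$, and under the prior $\Q^\beta$ rather than $\P_0$.

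It remains to transfer this fractional moment back to $\P_0$. Writing $\E[C_T^\delta]=\E^{\Q^\beta}\big[C_T^\delta\,(M_T^\beta)^{-1}\big]$ and applying H\"older's inequality with the conjugate exponents $1/\delta$ and $1/(1-\delta)$, which is legitimate precisely because $\delta=p\alpha<1$, I get $\E[C_T^\delta]\le\big(\E^{\Q^\beta}[C_T]\big)^{\delta}\big(\E^{\Q^\beta}[(M_T^\beta)^{-1/(1-\delta)}]\big)^{1-\delta}$. The first factor is at most $(w/\gamma_{\min})^\delta$. For the second, $\E^{\Q^\beta}[(M_T^\beta)^{-b}]=\E[(M_T^\beta)^{1-b}]$ with $b=1/(1-\delta)>1$, a negative exponential moment of a density whose kernel is bounded by $\kappa$; by a Novikov-type estimate (factoring out $\exp(\tfrac{(s+s^2)\kappa^2T}{2})$ for $s=\delta/(1-\delta)$ and using that the remaining stochastic exponential has $\P_0$-expectation one) this is bounded by a constant depending only on $\kappa,\delta,T$. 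All constants depend only on $\kappa,T,\|r\|_\infty,w,\alpha$ and not on $C$ or $\beta$, so $\sup_{C\in\mathcal{A}_h(w)}\E[C_T^\delta]<\infty$, and hence $\sup_{C\in\mathcal{A}_h(w)}\E[|U(C)|^p]<\infty$ with $p>2$, which gives the claim.

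The main obstacle is exactly this last transfer of moments between $\Q^\beta$ and $\P_0$: the budget delivers only the first moment $\E^{\Q^\beta}[C_T]\le w/\gamma_{\min}$, so a naive Cauchy--Schwarz would demand a second $\Q^\beta$-moment of $C_T$ that is simply not available. The hypothesis $\alpha<\tfrac12$ is what makes the scheme close, since it guarantees an admissible exponent $p>2$ with $\delta=p\alpha<1$; this in turn makes $1/\delta$ an admissible H\"older exponent that lands precisely on the only controllable moment of $C_T$, while the change-of-measure density contributes only a uniformly bounded exponential factor. I would therefore organize the write-up around these four steps, flagging the choice of $p$ and the H\"older exponent $1/\delta$ as the crucial points.
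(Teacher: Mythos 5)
Your proof is correct, and its overall skeleton coincides with the paper's: both reduce the claim to a uniform bound on a fractional moment $\E[C_T^{\alpha p}]$ with $2<p<1/\alpha$ (this is exactly where $\alpha<\tfrac12$ enters), both then apply H\"older with the conjugate exponents $1/(\alpha p)$ and $1/(1-\alpha p)$ so as to land on the single available first moment of $C_T$ under an equivalent measure, and both close the argument with a uniform bound on negative exponential moments of a Girsanov density whose kernel is bounded by $\kappa$ (the paper's Lemma \ref{l1}, your Novikov-type factorization). Where you genuinely diverge is in how the nonlinear budget constraint is converted into a linear first-moment bound. The paper invokes the variational representation of Proposition \ref{A1} (which requires concavity (A3)) with an arbitrary fixed $\xi\in D_h$, obtaining $\E^{\P^\xi}[\int_0^T\gamma_t\,\d C_t]\leq \widetilde{\mathcal{E}}^h[\int_0^T\gamma_t\,\d C_t]+\E^{\P^\xi}[\int_0^T\ell(s,\xi_s)\,\d s]$, and must then spend an extra step (their inequality \eqref{bound f}) bounding the penalty integral. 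You instead linearize the Lipschitz driver $\widetilde h$ (using only (A2) and the normalization (A4)) and apply Girsanov to get the exact, penalty-free identity $\widetilde{\mathcal{E}}^h[\int_0^T\gamma_t\,\d C_t]=\E^{\Q^\beta}[\int_0^T\gamma_t\,\d C_t]\leq w$; the price is that $\Q^\beta$ depends on $C$, which is harmless since, as you note, every estimate is uniform over kernels bounded by $\kappa$. Your route buys two things: it dispenses with concavity (A3) entirely, and it avoids the penalty term, at the cost of justifying the linearization/martingale step of the BSDE representation (standard, but worth one sentence on why the stochastic integral is a true $\Q^\beta$-martingale); the paper's route keeps the measure fixed independent of $C$ and leans on machinery (Proposition \ref{A1}) it needs elsewhere anyway.
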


\begin{theorem}
\label{oct1}
Suppose that the functions $g,h$ satisfy (A1)-(A4). Under Assumption (H1), the utility maximization problem \eqref{oc1} has a solution. Moreover, if $u(\omega,t,\cdot)$ is strictly concave for every $t\in[0,T]$ and $C\rightarrow Y^C$ is injective up to $\P_0$-indistinguishability, such a solution is unique.
\end{theorem}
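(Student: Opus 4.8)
The plan is to follow the convex-analytic, Koml\'os-type scheme of \cite{BR}, replacing the classical dominated-convergence, Fatou and monotonicity arguments for the linear expectation by their nonlinear counterparts for the $g$-expectation (monotonicity, strict comparison, $L^2$-continuity via the a priori BSDE estimates, and the variational representation), all of which are available from the properties collected in \ref{app}. Throughout I will use that $C\mapsto Y^C$ is affine, that $u(\omega,t,\cdot)$ is concave, and that $g,h$ concave make $\mathcal{E}^{g}$ concave and $\widetilde{\mathcal{E}}^{h}=\mathcal{E}^{\widetilde h}$ convex; consequently $V=\mathcal{E}^{g}\big[\int_0^T u(t,Y^{\cdot}_t)\,\d t\big]$ is concave and $\Psi$ is convex.

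For existence I would first take a maximizing sequence $\{C^n\}\subset\mathcal{A}_h(w)$ with $V(C^n)\to v$, the supremum being finite by (H1). To invoke Koml\'os I need an $L^1$-bound on total consumption. Since $r$ is bounded, $\gamma_t\ge c>0$ for some constant $c$, so $\int_0^T\gamma_t\,\d C^n_t\ge c\,C^n_T$; using the variational representation of $\widetilde{\mathcal{E}}^{h}$ (valid because $\widetilde h$ is convex, Lipschitz and vanishes at $0$, so the representing Girsanov kernels are bounded by $\kappa$) one fixes a single equivalent measure $\Q\sim\P_0$ with finite penalty and obtains $c\,\E_\Q[C^n_T]\le \Psi(C^n)+\mathrm{const}\le w+\mathrm{const}$. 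Hence $\sup_n\E_\Q[C^n_T]<\infty$, and Kabanov's version \cite{K99} of Koml\'os' theorem yields convex combinations $\widehat C^n\in\mathrm{conv}(C^n,C^{n+1},\dots)$ converging $\P_0$-a.s., in the weak topology of measures on $[0,T]$, to some $C^\ast\in\mathcal{X}$. Concavity of $V$ gives $V(\widehat C^n)\to v$ as well. For feasibility, convexity of $\Psi$ gives $\Psi(\widehat C^n)\le w$; since $\gamma$ is pathwise continuous, the portmanteau theorem gives $\int_0^T\gamma_t\,\d\widehat C^n_t\to\int_0^T\gamma_t\,\d C^\ast_t$ a.s., and the Fatou property of $\widetilde{\mathcal{E}}^{h}$ (the integrands being nonnegative) yields $\Psi(C^\ast)\le\liminf_n\Psi(\widehat C^n)\le w$, i.e.\ $C^\ast\in\mathcal{A}_h(w)$. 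For optimality, Lemma \ref{ocl1}(ii) gives $Y^{\widehat C^n}_t\to Y^{C^\ast}_t$ for $t=T$ and at every continuity point of $C^\ast$, hence for a.e.\ $t$; continuity of $u$ together with the bound $Y^{\widehat C^n}_t\le K(1+\widehat C^n_T)$ from Lemma \ref{ocl1}(i) (the sequence $\widehat C^n_T$ being a.s.\ bounded in $n$, by convergence at $t=T$) allows dominated convergence in $t$ and gives $U(\widehat C^n)\to U(C^\ast)$ $\P_0$-a.s. Assumption (H1) upgrades this to $L^2(\P_0)$-convergence via Vitali's theorem, and the $L^2$-continuity of $\mathcal{E}^{g}$ then gives $V(C^\ast)=\lim_n V(\widehat C^n)=v$, so $C^\ast$ is optimal.

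For uniqueness, suppose $C^1,C^2$ are both optimal and $Y^{C^1}\neq Y^{C^2}$ (which follows from $C^1\neq C^2$ by the assumed injectivity of $C\mapsto Y^C$). Set $\bar C=\tfrac12(C^1+C^2)\in\mathcal{X}$. Then $\bar C\in\mathcal{A}_h(w)$ by convexity of $\Psi$, and $Y^{\bar C}=\tfrac12(Y^{C^1}+Y^{C^2})$ by affinity of $C\mapsto Y^C$. Strict concavity of $u(\omega,t,\cdot)$ together with $Y^{C^1}\neq Y^{C^2}$ on a set of positive $\d t\otimes\d\P_0$-measure yields, after integrating in $t$ and applying Fubini, $U(\bar C)\ge\tfrac12\big(U(C^1)+U(C^2)\big)$ $\P_0$-a.s.\ with strict inequality on a set of positive $\P_0$-measure. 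The strict comparison property of $\mathcal{E}^{g}$ then gives $\mathcal{E}^{g}[U(\bar C)]>\mathcal{E}^{g}[\tfrac12(U(C^1)+U(C^2))]$, while concavity of $\mathcal{E}^{g}$ gives $\mathcal{E}^{g}[\tfrac12(U(C^1)+U(C^2))]\ge\tfrac12(V(C^1)+V(C^2))=v$. Hence $V(\bar C)>v$ with $\bar C$ feasible, contradicting optimality; thus the optimizer is unique.

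The genuinely new difficulty relative to the linear setting of \cite{BR} is the passage to the limit through the two \emph{nonlinear} functionals. Both the closedness of the budget set (lower semicontinuity of $\Psi$) and the continuity of the objective $V$ must be derived from the nonlinear Fatou property, the $L^2$-continuity, and the variational representation of $g$-expectations rather than from the classical convergence theorems; in particular one must extract the $L^1$-bound that feeds Kabanov's theorem directly from the nonlinear budget constraint, which is where I expect the main technical care to be needed.
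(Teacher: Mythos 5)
Your proposal is correct and follows essentially the same route as the paper's proof: the uniform bound on $C^n_T$ under an equivalent measure extracted from the variational representation of $\widetilde{\mathcal{E}}^h$, Kabanov's version of Koml\'os' theorem, concavity/convexity to keep the convex combinations maximizing and feasible, Portmanteau plus the nonlinear Fatou property for closedness of the budget set, (H1) and the a priori BSDE estimates for convergence of the objective, and strict concavity combined with the strict comparison theorem for uniqueness. The minor variations (general convex combinations instead of Ces\`aro averages, explicit invocation of Vitali and of the domination from Lemma \ref{ocl1}(i)) do not change the argument.
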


\begin{proof}
Let $\{C^n\}_{n=1}^\infty\subset \mathcal{A}_h(w)$ such that $\sup_{C\in\mathcal{A}(w)}V(C)=\lim_{n\rightarrow\infty}V(C^n)$. Denote by $\ell$ the convex dual of $h$ and bear in mind Proposition \ref{A1} in Appendix \ref{app}. Since the interest rate is bounded, there exists a constant $K>0$ such that for any $\xi\in D_h$ we have
	\begin{align*}
	\sup_{C\in\mathcal{A}(w)}\E^{\P^\xi}[C_T]&\leq K\sup_{C\in\mathcal{A}(w)}\E^{\P^\xi}\bigg[\int_0^T \gamma_t \d C_t\bigg] \leq K\bigg\{\sup_{C\in\mathcal{A}(w)}\widetilde{\mathcal{E}}^h\bigg[\int_0^T \gamma_t d\ C_t\bigg]+\E^{\P^\xi}\bigg[\int_0^T \ell(s,\xi_s)\d s\bigg]\bigg\}<\infty,
	\end{align*}
	where we have employed \eqref{bound f} in the last inequality. Then, by version of Koml\'{o}s' theorem due to Y.\ Kabanov (Lemma 3.5 in \cite{K99}), there exists a subsequence, for simplicity still denoted by $\{C^n\}$, such that $\P^\xi$-a.s.
	\begin{displaymath}
	\widetilde{C}^n_t:=\frac{1}{n}\sum_{k=1}^{n}C^k_t\rightarrow C^*_t, \ \textrm{as }n\rightarrow\infty
	\end{displaymath}
	for $t=T$ and for every point of continuity $t$ of $C^*$. Since $\P^\xi$ is equivalent to $\P_0$, the above convergence holds $\P_0$-a.s. We claim that $\{\widetilde{C}^n\}$ is also a maximizing sequence for problem \eqref{oc1}. Indeed, the convexity of $\widetilde{\mathcal{E}}^h[\,\cdot\,]$ implies that $\widetilde{C}^n\in \mathcal{A}_h(w)$, for any $n\in\mathbb{N}$. Therefore, we have $V(\widetilde{C}^n)\leq \sup_{C\in\mathcal{A}_h(w)}V(C)$. On the other hand, it is easy to check that
	\begin{displaymath}
	Y^{\widetilde{C}^n}_t=\frac{1}{n}\sum_{k=1}^{n}Y_t^{C^k},
	\end{displaymath}
	and since $u(\omega, t,\cdot)$ and $\mathcal{E}^{g}[\cdot]$ are both concave, we then obtain that
	\begin{displaymath}
	V(\widetilde{C}^n)=\mathcal{E}^{g}[U(\widetilde{C}^n)]\geq \frac{1}{n}\sum_{k=1}^n \mathcal{E}^{g}[U(C^k)]=\frac{1}{n}\sum_{k=1}^n V(C^k).
	\end{displaymath}
	Noting that $\{C^n\}$ is a maximizing sequence, it follows that $\liminf_{n\rightarrow\infty} V(\widetilde{C}^n)\geq \sup_{C\in\mathcal{A}_h(w)}V(C)$, and the claim thus holds.
	
	We then show that $C^*$ is optimal for problem \eqref{oc1}. Since $\gamma$ is continuous, we have by Portmanteau's theorem
	\begin{displaymath}
	\lim_{n\rightarrow\infty}\int_0^T \gamma_t\d\widetilde{C}^n_t=\int_0^T \gamma_t\d C^*_t,\  \P_0\textrm{-a.s.},
	\end{displaymath}
	and by Fatou's lemma and the convexity of $\widetilde{\mathcal{E}}^h[\,\cdot\,]$ it follows that
	\begin{displaymath}
	\widetilde{\mathcal{E}}^h\bigg[\int_0^T \gamma_t\d C^*_t\bigg]\leq \liminf_{n\rightarrow\infty}\widetilde{\mathcal{E}}^h\bigg[\int_0^T \gamma_t\d\widetilde{C}^n_t\bigg]\leq \liminf_{n\rightarrow\infty}\frac{1}{n}\sum_{k=1}^n\widetilde{\mathcal{E}}^h\bigg[\int_0^T \gamma_t\d C^n_t\bigg]\leq w,
	\end{displaymath}
	which implies that $C^*\in\mathcal{A}_h(w)$. Besides, by Lemma \ref{ocl1}, we have $U(\widetilde{C}^n)\rightarrow U(C^*)$, $\P_0$-a.s. Invoking now Assumption (H1) yields that $\E[|U(\widetilde{C}^n)-U(C^*)|^2]\rightarrow 0$, and by estimates for BSDEs (cf.\ Proposition 2.1 in \cite{EPQ}), we obtain that
	\begin{displaymath}
	V(\widetilde{C}^n)=\mathcal{E}^{g}[U(\widetilde{C}^n)]\rightarrow \mathcal{E}^{g}[U(C^*)]=V(C^*).
	\end{displaymath}
	Recalling that $\{\widetilde{C}^n\}$ is a maximizing sequence of problem \eqref{oc1}, it follows that $C^*$ is optimal.
	
	It remains to prove uniqueness. If $C_1$ and $C_2$ are both optimal and they are not indistinguishable, then their associated levels of satisfaction $Y^1=Y^{C^1}$ and $Y^2=Y^{C^2}$ are not indistinguishable neither. By arguments similar to those employed in the proof of Theorem 2.3 in \cite{BR}, on a set with positive probability, $Y^1$ and $Y^2$ differ on an open time interval. By the strict concavity of $u(\omega,t,\cdot)$ and the strict comparison theorem for solutions to BSDEs (cf.\ Therorem 2.2 in \cite{EPQ}), this gives
	\begin{align*}
& V(\frac{1}{2}(C^1+C^2)) =\mathcal{E}^{g}\bigg[\int_0^T u\big(t,\frac{1}{2}(Y^1_t+Y^2_t)\big)\d t\bigg] >\mathcal{E}^{g}\bigg[\int_0^T \frac{1}{2}\big(u(t,Y^1_t)+u(t,Y^2_t)\big)\d t\bigg]\\
	&\geq \frac{1}{2}\bigg(\mathcal{E}^{g}\bigg[\int_0^T u(t,Y^1_t)\d t\bigg]+\mathcal{E}^{g}\bigg[\int_0^T u(t,Y^2_t)\d t\bigg]\bigg) =\frac{1}{2}\big(V(C^1)+V(C^2)\big)=\sup_{C\in\mathcal{A}_h(w)}V(C),
	\end{align*}
	which gives the desired contradiction.
\end{proof}

\begin{remark}
We may check that if $\theta_{t,s}=\theta^1_t\theta^2_s$ for some strictly positive, continuous functions $\theta^1,\theta^2:[0,T]\rightarrow\mathbb{R}^+$, then the mapping $C\mapsto Y^C$ is injective. In particular, if $\theta$ is of the form presented in Remark \ref{oc17}, the injectivity follows. Besides, Theorem \ref{oct1} holds true even if $g$ does not satisfy (A4).
\end{remark}

\begin{remark}\label{general}
We could also consider the utility maximization problem under the general nonlinear expectations
\begin{displaymath}
\label{variational preference}
\begin{split}
&\mathcal{E}[X]:=\inf_{\P\in\mathcal{P}_1}\big(\E^\P[X]+c_1(\P)\big),\quad \text{and} \quad \widetilde{\mathcal{E}}[Y]:=\sup_{\P\in\mathcal{P}_2}\big(\E^\P[Y]-c_2(\P)\big).
\end{split}\end{displaymath}
For this purpose, for any fixed constant $p>1$, we assume that the multiple priors and penalty functions satisfy the following assumptions:		
\begin{description}
\item[(i)]$\sup_{\P\in\mathcal{P}_1}\E\big[\big|\frac{\d\P}{\d\P_0}\big|^p\big]<\infty$, where $p>1$;
		\item[(ii)] $0\leq \inf_{\P\in\mathcal{P}_2}c_2(\P)\leq \sup_{\P\in\mathcal{P}_2}c_2(\P)<\infty$,
\end{description}
and we define the budget feasible set as
\begin{displaymath}
	\widehat{\mathcal{A}}(w):=\Big\{C\in\mathcal{X}\,\big|\, \widetilde{\mathcal{E}}\bigg[\int_0^T \gamma_t \d C_t\bigg]\leq w\Big\}.
\end{displaymath}
Here, $\gamma$ is the discount factor associated with a bounded, progressively measurable interest rate $r$ and $\mathcal{X}$ is the collection of all distribution functions of nonnegative optional random measures as in Section 2. By Assumption  (ii), $\mathcal{A}(w)$ is non-empty for any given initial wealth $w>0$. The level of satisfaction $Y^C$ and the agent's utility $U(C)$ are as in Section 2. The agent aims to maximize her expected utility over all budget feasible consumption plans and the value function is now defined as
\begin{equation}\label{e71}
	\widehat{v}:=\sup_{C\in \widehat{\mathcal{A}}(w)}\mathcal{E}[U(C)].
\end{equation}

Supposing that the family of budget feasible utilities $\{U(C), C\in\widehat{\mathcal{A}}(w)\}$ is uniformly $p^*$-integrable under $\P_0$, where $p^*=p/(p-1)$, we can still show that the utility maximization problem \eqref{e71} has a solution. However, due to the lack of the strict comparison property for $\mathcal{E}[\,\cdot\,]$, we loose uniqueness.
\end{remark}


\section{Sufficient first-order conditions for optimality}
\label{FOCs}

This section is devoted to  the proof of  first-order conditions for optimality. For any two functions $g,h$ satisfying (A1)-(A3), let $f,\ell$ denote their respective convex duals. Now, for any budget feasible consumption plan $C$ of problem \eqref{oc1} set
\begin{align*}
\mathcal{P}_1(C)&:=\Big\{\P^\xi\,\big|\,\xi\in D_g, \mathcal{E}^{g}[U(C)]=\E^{\P^\xi}\bigg[U(C)+\int_0^T f(s,\xi_s)\d s\bigg]\Big\},\\
\mathcal{P}_2(C)&:=\Big\{\P^\xi\,\big|\,\xi\in D_h, \widetilde{\mathcal{E}}^h\bigg[\int_0^T \gamma_t \d C_t\bigg]=\E^{\P^\xi}\bigg[\int_0^T \gamma_t\d C_t-\int_0^T \ell(s,\xi_s)\d s\bigg]\Big\}.
\end{align*}
In fact, in light of Proposition \ref{A1} in Appendix \ref{app}, $\mathcal{P}_1(C)$ can be regarded as the collection of lowest-utility probabilities and $\mathcal{P}_2(C)$ the collection of largest-cost probabilities for the consumption plan $C$. In order to obtain the first-order condition, we need the following additional assumption on the felicity function.

\begin{description}
\item[(H2)] The felicity function $u$ is such that $u(\omega,t,\cdot)$ is strictly concave and differentiable for any $(\omega,t) \in \Omega \times [0,T]$.
\end{description}

\begin{theorem}
\label{oct2}
	Suppose that the functions $g,h$ satisfies Assumptions (A1)-(A4). Under Assumptions (H1)-(H2), a consumption plan $C^*$ solves the utility maximization problem \eqref{oc1} if  there exist some $\P_i:=\P^{\xi^i}\in\mathcal{P}_i(C^*)$, $i=1,2$, such that
	\begin{description}
		\item[(1)] $\displaystyle \widetilde{\mathcal{E}}^h\bigg[\int_0^T \gamma_t\d C^*_t\bigg]=w$;
		\item[(2)] $\displaystyle {\E}^{\P_2}_t\bigg[\frac{\d\P_1}{\d\P_2}\int_t^T \partial_y u(s,Y_s^*)\theta_{s,t}\d s\bigg]\leq M\gamma_t$ for any $t\in[0,T]$ a.s.;
		\item[(3)] $\displaystyle {\E}^{\P_1}\bigg[\int_0^T \Big(\int_t^T \partial_y u(s,Y_s^*)\theta_{s,t}\d s\Big) \d C^*_t\bigg]=M \E^{\P_2}\bigg[\int_0^T \gamma_t\d C^*_t\bigg]$,
	\end{description}
	where $M>0$ is a finite Lagrange multiplier and $Y^*=Y^{C^*}$.
\end{theorem}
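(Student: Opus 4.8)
The plan is to run a convex-analytic verification argument that exploits the concavity of both the objective $C\mapsto V(C)=\mathcal{E}^g[U(C)]$ and the constraint functional $C\mapsto\widetilde{\mathcal{E}}^h[\int_0^T\gamma_t\,\d C_t]$, with $M$ in the role of a Kuhn--Tucker multiplier and $\P_1,\P_2$ the worst-case measures that linearize the two nonlinear functionals. Fix an arbitrary competitor $C\in\mathcal{A}_h(w)$; the goal is to show $V(C)\le V(C^*)$. Since $\mathcal{E}^g$ admits the variational representation underlying the definition of $\mathcal{P}_1(C^*)$ (Proposition \ref{A1}), for every $\xi\in D_g$ one has $\mathcal{E}^g[U(C)]\le\E^{\P^\xi}[U(C)+\int_0^T f(s,\xi_s)\,\d s]$, while for $\P_1=\P^{\xi^1}\in\mathcal{P}_1(C^*)$ this holds with equality at $C^*$. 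Subtracting and cancelling the common penalty term $\int_0^T f(s,\xi^1_s)\,\d s$ yields the linearized bound $V(C)-V(C^*)\le\E^{\P_1}[U(C)-U(C^*)]$.

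Next I would pass from utilities to levels of satisfaction. Under (H2) the felicity $u(\omega,t,\cdot)$ is concave and differentiable, so the gradient inequality gives $u(t,Y_t^C)-u(t,Y_t^*)\le\partial_y u(t,Y_t^*)\,(Y_t^C-Y_t^*)$ pointwise; integrating in $t$ and taking the $\P_1$-expectation gives $V(C)-V(C^*)\le\E^{\P_1}[\int_0^T\partial_y u(t,Y_t^*)(Y_t^C-Y_t^*)\,\d t]$. Writing $Y_t^C-Y_t^*=\int_0^t\theta_{t,s}\,\d(C_s-C_s^*)$ from \eqref{eq:Y} and applying Fubini's theorem to interchange the $t$- and $s$-integrations (legitimate once the requisite integrability, supplied by (H1) and Lemma \ref{ocl1}, is checked), this becomes $\E^{\P_1}[\int_0^T\Gamma_t\,\d(C_t-C_t^*)]$, where $\Gamma_t:=\int_t^T\partial_y u(s,Y_s^*)\theta_{s,t}\,\d s$ is exactly the marginal-utility kernel appearing in conditions (2)--(3).

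The crux is then to convert this $\P_1$-integral into the $\P_2$-cost functional so as to bring in budget feasibility. I would change measure via $\frac{\d\P_1}{\d\P_2}$ (both measures being equivalent to $\P_0$ by construction) and then replace the integrand $\frac{\d\P_1}{\d\P_2}\Gamma_t$ by its $\P_2$-optional projection, using that integration against the optional random measure $\d C$ only sees the optional projection of the integrand, so that $\E^{\P_1}[\int_0^T\Gamma_t\,\d C_t]=\E^{\P_2}[\int_0^T\E^{\P_2}_t[\frac{\d\P_1}{\d\P_2}\Gamma_t]\,\d C_t]$. Condition (2) bounds the projected integrand by $M\gamma_t$ and, since $\d C\ge0$, delivers $\E^{\P_1}[\int_0^T\Gamma_t\,\d C_t]\le M\E^{\P_2}[\int_0^T\gamma_t\,\d C_t]$, whereas condition (3) is precisely the matching equality for $C^*$. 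Combining these, $V(C)-V(C^*)\le M\big(\E^{\P_2}[\int_0^T\gamma_t\,\d C_t]-\E^{\P_2}[\int_0^T\gamma_t\,\d C_t^*]\big)$.

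Finally I would close the estimate through the dual representation of the cost. For $\P_2=\P^{\xi^2}\in\mathcal{P}_2(C^*)$ that representation gives $\E^{\P_2}[\int_0^T\gamma_t\,\d C_t]\le\widetilde{\mathcal{E}}^h[\int_0^T\gamma_t\,\d C_t]+\E^{\P_2}[\int_0^T\ell(s,\xi^2_s)\,\d s]\le w+\E^{\P_2}[\int_0^T\ell(s,\xi^2_s)\,\d s]$, using $C\in\mathcal{A}_h(w)$; for $C^*$, the attainment property defining $\mathcal{P}_2(C^*)$ together with the binding constraint (1) turns both inequalities into equalities, so $\E^{\P_2}[\int_0^T\gamma_t\,\d C_t^*]=w+\E^{\P_2}[\int_0^T\ell(s,\xi^2_s)\,\d s]$. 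The penalty terms cancel, leaving $\E^{\P_2}[\int_0^T\gamma_t\,\d C_t]-\E^{\P_2}[\int_0^T\gamma_t\,\d C_t^*]\le0$, whence $V(C)-V(C^*)\le0$, as desired. I expect the main obstacle to be the optional-projection identity of the third step: since $\Gamma_t$ is not $\mathcal{F}_t$-measurable (it depends on $Y^*$ over $[t,T]$), making rigorous that integration against the optional measure $\d C$ commutes with $\P_2$-conditional expectation needs care, as does verifying the integrability underlying both the Fubini interchange and the gradient-inequality estimate.
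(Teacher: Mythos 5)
Your proposal is correct and follows essentially the same route as the paper's proof: linearization of $\mathcal{E}^g$ at the worst-case measure $\P_1$ via the variational representation, the concavity gradient inequality, Fubini to obtain the kernel $\Gamma_t=\int_t^T\partial_y u(s,Y^*_s)\theta_{s,t}\,\d s$, the measure change to $\P_2$ combined with the optional-projection identity (the paper cites Theorem 1.33 of Jacod for exactly the step you flag as delicate), and finally conditions (2), (3), (1) together with the dual representation of $\widetilde{\mathcal{E}}^h$ to cancel the penalty terms. The only differences are organizational (the paper computes the two integrals $I$ and $II$ first and invokes concavity last), not mathematical.
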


\begin{proof}
Assume that $C^*$ satisfies conditions (1)-(3) and consider another budget feasible consumption plan $C\in\mathcal{A}_h(w)$. For simplicity, set $Y:=Y^C$, $Y^*:=Y^{C^*}$, and let
	\begin{align*}
	I&:={\E}^{\P_1}\bigg[\int_0^T \Big(\int_0^s \partial_yu(s,Y_s^*)\theta_{s,t}\d C_t^*\Big) \d s\bigg]={\E}^{\P_1}\bigg[\int_0^T \Big(\int_t^T \partial_yu(s,Y_s^*)\theta_{s,t}\d s\Big) \d C_t^* \bigg],\\
	II&:={\E}^{\P_1}\bigg[\int_0^T \Big(\int_0^s \partial_yu(s,Y_s^*)\theta_{s,t}\d C_t\Big) \d s\bigg]={\E}^{\P_1}\bigg[\int_0^T \Big(\int_t^T \partial_yu(s,Y_s^*)\theta_{s,t}\d s\Big) \d C_t \bigg],
	\end{align*}
	where we have used Fubini theorem. Noting that $\P_2\in\mathcal{P}_2(C^*)$, it is easy to check that
	\begin{align*}
	I&=M \E^{\P_2}\bigg[\int_0^T \gamma_t\d C^*_t\bigg]=M \E^{\P_2}\bigg[\int_0^T \gamma_t\d C^*_t+\int_0^T \ell(s,\xi^2_s)\d s-\int_0^T \ell(s,\xi^2_s)\d s\bigg]\\
	&=M\widetilde{\mathcal{E}}^h\bigg[\int_0^T \gamma_t\d C^*_t\bigg]+M\E^{\P_2}\bigg[\int_0^T \ell(s,\xi^2_s)\d s\bigg]=Mw+M\E^{\P_2}\bigg[\int_0^T \ell(s,\xi^2_s)\d s\bigg],
	\end{align*}
	and
	\begin{align*}
	II=&{\E}^{\P_2}\bigg[\int_0^T \frac{\d\P_1}{\d{\P}_2} \Big(\int_t^T \partial_yu(s,Y_s^*)\theta_{s,t}\d s\Big)\d C_t\bigg]={\E}^{\P_2}\bigg[\int_0^T {\E}^{\P_2}_t\bigg[\frac{\d\P_1}{\d{\P}_2}\int_t^T \partial_y u(s,Y_s^*)\theta_{s,t}\d s\bigg]\d C_t\bigg]\\
	\leq &M\E^{{\P}_2}\bigg[\int_0^T \gamma_t\d C_t+\int_0^T \ell(s,\xi^2_s)\d s-\int_0^T \ell(s,\xi^2_s)\d s\bigg]
	= M \widetilde{\mathcal{E}}^h\bigg[\int_0^T \gamma_t\d C_t\bigg]+M\E^{\P_2}\bigg[\int_0^T \ell(s,\xi^2_s)\d s\bigg]\\
	\leq& Mw+M\E^{\P_2}\bigg[\int_0^T \ell(s,\xi^2_s)\d s\bigg],
	\end{align*}
	where Theorem 1.33 in \cite{J79} has been used in order to obtain the second equality above. Noting that $u(\omega,t,\cdot)$ is strictly concave, we finally have
	\begin{align*}
	V(C^*)-V(C)\geq &\E^{\P_1}\bigg[U(C^*)+\int_0^T f(s,\xi^1_s)\d s\bigg]-\E^{\P_1}\bigg[U(C)+\int_0^T f(s,\xi^1_s)\d s\bigg]\\
	=&\E^{\P_1}\bigg[\int_0^T (u(s,Y^*_s)-u(s,Y_s))\d s\bigg]
	\geq \E^{\P_1}\bigg[\int_0^T \partial_y u(s,Y_s^*)(Y_s^*-Y_s)\d s\bigg]\\
	=&\E^{P_1}\bigg[\int_0^T \Big(\int_0^s \partial_y u(s,Y_s^*)\theta_{s,t}(\d C_t^*-\d C_t)\Big)\d s\bigg] = I-II\geq 0,
	\end{align*}
	which clearly completes the proof.
\end{proof}

\subsection{Some remarks}
\label{sec:remarks}

We conclude this section with some comments.

\begin{remark}
\label{ocl5}
\begin{itemize}
\item[(i)] A careful inspection of the previous proof actually reveals that if $g$ does not satisfy requirement (A4), Theorem \ref{oct2} still holds.

\item[(ii)] Let $\phi_t:={\E}^{\P_1}_t\big[\int_t^T \partial_y u(s,Y_s^*)\theta_{s,t}\d s\big]$. Applying the continuous-time Bayes' rule, condition (2) of Theorem \ref{oct2} is equivalent to
	\begin{displaymath}
	\phi_t\leq M\gamma_t \E^{\P_1}_t\bigg[\frac{\d\P_2}{\d\P_1}\bigg], \ \  t\in[0,T],
	\end{displaymath}
where the state-price density $\gamma_t \E^{\P_1}_t\big[\frac{\d\P_2}{\d\P_1}\big]$ appears on the right-hand side.

\item[(iii)] As a matter of fact, if $C^*$, $\P_1$ and ${\P}_2$ satisfy conditions (2) and (3) of Theorem \ref{oct2}, then for any stopping time $S\leq T$, we have
	\begin{equation}\label{oc3}
	{\E}^{{\P}_2}_S\bigg[\int_S^T\widetilde{\phi}_t \d C^*_t\bigg]=M \E_S^{{\P}_2}\bigg[\int_S^T \gamma_t\d C^*_t\bigg],
	\end{equation}
	where
	\begin{displaymath}
	\widetilde{\phi}_t:=\E^{{\P}_2}_t\bigg[\frac{\d\P_1}{\d{\P}_2}\int_t^T\partial_yu(s,Y_s^*)\theta_{s,t}\d s\bigg],\  t\in[0,T].
	\end{displaymath}
This fact can be indeed proved as follows. Notice that conditions (2) and (3) yield that for any stopping time $S\leq T$,
	\begin{displaymath}
	0=\E^{{\P}_2}\bigg[\int_0^T (\widetilde{\phi}_t-M\gamma_t) \d C_t^*\bigg]\leq \E^{{\P}_2}\bigg[\int_S^T (\widetilde{\phi}_t-M\gamma_t) \d C_t^*\bigg]\leq 0,
	\end{displaymath}
	which clearly implies that $\E^{{\P}_2}\big[\int_S^T (\widetilde{\phi}_t-M\gamma_t) \d C_t^*\big]= 0$. It is then easy to check that
	\begin{displaymath}
	\E^{{\P}_2}_S\bigg[\int_S^T \widetilde{\phi}_t\d C^*_t\bigg]\leq M \E^{{\P}_2}_S\bigg[\int_S^T\gamma_t \d C^*_t\bigg].
	\end{displaymath}
	If now \eqref{oc3} does not hold, defining
$$A:=\Big\{\E^{{\P}_2}_S\bigg[\int_S^T \widetilde{\phi}_t\d C^*_t\bigg]< M \E^{{\P}_2}_S\bigg[\int_S^T\gamma_t \d C^*_t\bigg]\Big\},$$
we have $\P_2(A)>0$ and by the strict comparison property for BSDEs (cf.\ Theorem  2.2 in  \cite{EPQ}) this leads to the contradiction
	\begin{displaymath}
	\E^{{\P}_2}\bigg[\int_S^T (\widetilde{\phi}_t-M\gamma_t) \d C_t^*\bigg]< 0.
	\end{displaymath}
	\end{itemize}
\end{remark}

\begin{remark}
\label{general1}
Consider the general utility maximization problem introduced in Remark \ref{general}. Also for this problem we can establish a set of sufficient conditions for optimality. For any budget feasible consumption plan $C\in\widehat{\mathcal{A}}(w)$, we define
\begin{align*}
	&\mathcal{P}^1(C):=\Big\{\P\in\mathcal{P}_1\,\big|\,\mathcal{E}[U(C)]=\E^\P[U(C)]+c_1(\P)\Big\},\\
	&\mathcal{P}^2(C):=\Big\{\P\in\mathcal{P}_2\,\big|\,\widetilde{\mathcal{E}}\bigg[\int_0^T \gamma_t \d C_t\bigg]=\E^\P\bigg[\int_0^T \gamma_t \d C_t\bigg]-c_2(\P)\Big\},
\end{align*}
and let Assumption (H2) hold. Suppose also that the family of budget feasible utilities $\{U(C), C\in\widehat{\mathcal{A}}(w)\}$ is uniformly $p^*$-integrable under $\P_0$ (where $p^*=p/(p-1)$, for any $p>1$). Then, a consumption plan $C^*$ solves the utility maximization problem \eqref{e71} if there exist some $\P_i\in\mathcal{P}^i(C^*)$, $i=1,2$ such that
	\begin{description}
		\item[(1)] $ \displaystyle \widetilde{\mathcal{E}}\bigg[\int_0^T \gamma_t\d C^*_t\bigg]=w$;
		\item[(2)] $\displaystyle {\E}^{\P_2}_t\bigg[\frac{\d\P_1}{\d\P_2}\Big(\int_t^T \partial_y u(s,Y_s^*)\theta_{s,t}\d s\Big)\bigg]\leq M\gamma_t$ for any $t\in[0,T]$ a.s.;
		\item[(3)] $\displaystyle {\E}^{\P_1}\bigg[\int_0^T\Big(\int_t^T \partial_y u(s,Y_s^*)\theta_{s,t}\d s\Big) \d C^*_t\bigg]=M \E^{\P_2}\bigg[\int_0^T \gamma_t\d C^*_t\bigg]$,
	\end{description}
	where $M>0$ is a finite Lagrange multiplier and $Y^*=Y^{C^*}$.
\end{remark}

\begin{remark}
\label{rem:necessity}
Recall that in the linear case treated in \cite{BR}, the FOCs are also shown to be necessary for optimality (cf. Theorem 3.2 in \cite{BR}). In \cite{BR} the proof of this is organized as follows. First, it is shown that the optimal consumption plan $C^*$ solves an auxiliary problem linearized around $C^*$. Then, a characterization of any solution to such a linearized problem is provided.

In our setting, assuming that
 \begin{itemize}
	\item[(A5)] For each $\omega\in\Omega$, $t\in[0,T]$ and $z\in\mathbb{R}$, the equation $g(\omega,t,z)-xz=f(\omega,t,x)$ admits a unique solution $x\in[-\kappa,\kappa]$, denoted by $x(\omega,t,z)$. Furthermore, $z \mapsto x(\omega,t,z)$ is continuous, for any $(\omega,t) \in\Omega\times [0,T]$,
\end{itemize}
and arguing as in the proof of Lemma B3 in \cite{FLR}, we can show that there exists some $\P_1\in\mathcal{P}_1(C^\star)$ such that the optimal $C^*$ for \eqref{oc1} also solves
\begin{equation}\label{semilinear}
\sup_{C\in\mathcal{A}_h(w)}\E^{\P_1}\left[\int_0^T \E^{\P_1}_t\left[\int_t^T \partial_y u(s,Y^\star_s)\theta_{s,t}\d s\right]\d C_t\right],
\end{equation}
where $Y^*:=Y^{C^*}$. However, the fact that the expectation arising in the set $\mathcal{A}_h(w)$ is nonlinear gives rise to technical difficulties when trying to characterize conditions for the above problem \eqref{semilinear}. Fortunately, we shall see in the next sections that the sufficiency of the FOCs does actually suffice as a verification tool for checking the optimality of a given candidate consumption plan.
\end{remark}


\section{Time-consistency and structure of the optimal consumption plan}
\label{sec:structure}

In this section, we first first study the optimal consumption problem dynamically and prove a version of the dynamic programming principle,  which indicates that if a consumption plan is optimal at time zero, then it is also optimal at any later time. Then, we will show how to construct the optimal consumption plan through an auxiliary backward equation.

\begin{proposition}
\label{ocl6}
	Suppose that the functions $g,h$ satisfy (A1)-(A4). Let $S\leq T$ be a stopping time, $C^*$ be the optimal consumption plan for the utility maximization problem \eqref{oc1}, and set
	\begin{displaymath}
	\mathcal{A}_S(C^*):=\Big\{C\in\mathcal{X}\,\big|\,C|_{[0,S)}\equiv C^*|_{[0,S)}, \Psi_S(C)\leq \Psi_S(C^*)\Big\},
	\end{displaymath}
	where
	\begin{displaymath}
	\Psi_S(C):=\frac{1}{\gamma_S}\widetilde{\mathcal{E}}^g_S\bigg[\int_S^T\gamma_t\d C_t\bigg].
	\end{displaymath}
	Consider then the optimal consumption problem starting at time $S$
	\begin{equation}\label{oc5}
	v_S:=\esssup_{C\in\mathcal{A}_S(C^*)}\mathcal{E}^{h}_S[U(C)],
	\end{equation}
and assume that the felicity function $u$ satisfies $u(\omega,t,0)=0$ for any $(\omega,t)\in\Omega \times [0,T]$. Then the value function $v$ is an $\mathcal{E}^{g}$-supermartingale in the strong sense\footnote{A process $X$ is called an $\mathcal{E}^g$-supermartingale in the strong sense if $X_\tau\in L^2(\mathcal{F}_\tau)$ for any stopping time $\tau$ and for any stopping times $\tau$ and $\sigma$ taking values in $[0,T]$ with $\tau\leq \sigma$, we have $\mathcal{E}^g_\tau[X_\sigma]\leq X_\tau$.}. Besides, $C^*$ is optimal for \eqref{oc5}.
\end{proposition}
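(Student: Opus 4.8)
The plan is to read Proposition \ref{ocl6} as a dynamic programming principle for the value \emph{process} $(v_S)$ indexed by stopping times (with $v_0=v$), and to derive everything from three structural properties of the conditional functionals, all recalled in Appendix \ref{app}: time-consistency (the tower property) $\mathcal{E}^g_\tau[\mathcal{E}^g_\sigma[\,\cdot\,]]=\mathcal{E}^g_\tau[\,\cdot\,]$ for $\tau\leq\sigma$ (and likewise for $\widetilde{\mathcal{E}}^h$); cash-invariance with respect to $\mathcal{F}_\sigma$-measurable increments, which holds because the drivers do not depend on the $y$-variable; and the local property $\mathbf{1}_A\mathcal{E}^g_S[X]=\mathbf{1}_A\mathcal{E}^g_S[\mathbf{1}_A X]$ for $A\in\mathcal{F}_S$, which follows from (A4) via $\mathcal{E}^g_S[0]=0$. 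The assumption $u(\omega,t,0)=0$, together with monotonicity of $u$ and $Y^C\geq\eta\geq0$, will be used to guarantee $U(C)\geq0$, so that the family under the $\esssup$ is bounded below and the locality identities apply at the level of $U$.

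First I would establish the nesting $\mathcal{A}_\sigma(C^*)\subseteq\mathcal{A}_\tau(C^*)$ for $\tau\leq\sigma$. Splitting $\int_{[\tau,T]}\gamma_t\,\d C_t=\int_{[\tau,\sigma)}\gamma_t\,\d C_t+\int_{[\sigma,T]}\gamma_t\,\d C_t$, the first term is $\mathcal{F}_\sigma$-measurable, so tower plus cash-invariance give $\gamma_\tau\Psi_\tau(C)=\widetilde{\mathcal{E}}^h_\tau\big[\int_{[\tau,\sigma)}\gamma_t\,\d C_t+\gamma_\sigma\Psi_\sigma(C)\big]$; for $C\in\mathcal{A}_\sigma(C^*)$ one has $C=C^*$ on $[0,\sigma)$ and $\Psi_\sigma(C)\leq\Psi_\sigma(C^*)$, whence monotonicity of $\widetilde{\mathcal{E}}^h_\tau$ yields $\Psi_\tau(C)\leq\Psi_\tau(C^*)$, while agreement on $[0,\tau)$ is immediate. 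I would also record stability under $\mathcal{F}_S$-pasting: for $C_1,C_2\in\mathcal{A}_S(C^*)$ and $A\in\mathcal{F}_S$, the plan equal to $C^*$ on $[0,S)$ with increment $\mathbf{1}_A\,\d C_1+\mathbf{1}_{A^c}\,\d C_2$ on $[S,T]$ lies again in $\mathcal{A}_S(C^*)$, by locality of $\widetilde{\mathcal{E}}^h_S$. Using these two facts I would verify the lattice (upward-directedness) property: setting $A:=\{\mathcal{E}^g_S[U(C_1)]\geq\mathcal{E}^g_S[U(C_2)]\}\in\mathcal{F}_S$ and pasting gives $C_3\in\mathcal{A}_S(C^*)$ with, pathwise, $U(C_3)=\mathbf{1}_A U(C_1)+\mathbf{1}_{A^c}U(C_2)$, so locality of $\mathcal{E}^g_S$ yields $\mathcal{E}^g_S[U(C_3)]=\max\big(\mathcal{E}^g_S[U(C_1)],\mathcal{E}^g_S[U(C_2)]\big)$. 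Hence there is a sequence $C_n\in\mathcal{A}_S(C^*)$ with $\mathcal{E}^g_S[U(C_n)]\uparrow v_S$; integrability $v_S\in L^2(\mathcal{F}_S)$ follows from (H1) and the a priori BSDE estimates (Proposition 2.1 in \cite{EPQ}).

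For the strong supermartingale inequality, fix $\tau\leq\sigma$. For any $C\in\mathcal{A}_\sigma(C^*)\subseteq\mathcal{A}_\tau(C^*)$ the tower property gives $\mathcal{E}^g_\tau[U(C)]=\mathcal{E}^g_\tau[\mathcal{E}^g_\sigma[U(C)]]\leq v_\tau$. Taking the increasing sequence $C_n\in\mathcal{A}_\sigma(C^*)$ with $\mathcal{E}^g_\sigma[U(C_n)]\uparrow v_\sigma$ and invoking continuity from below (monotone convergence) of $\mathcal{E}^g_\tau$, I obtain $\mathcal{E}^g_\tau[v_\sigma]=\lim_n\mathcal{E}^g_\tau[\mathcal{E}^g_\sigma[U(C_n)]]=\lim_n\mathcal{E}^g_\tau[U(C_n)]\leq v_\tau$, which is exactly the supermartingale property in the strong sense. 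For optimality of $C^*$ in \eqref{oc5}, note $C^*\in\mathcal{A}_S(C^*)$ gives $v_S\geq\mathcal{E}^g_S[U(C^*)]$; I would argue equality by contradiction. If $\P_0(v_S>\mathcal{E}^g_S[U(C^*)])>0$, directedness produces $C\in\mathcal{A}_S(C^*)$ and $A\in\mathcal{F}_S$ with $\P_0(A)>0$ on which $\mathcal{E}^g_S[U(C)]>\mathcal{E}^g_S[U(C^*)]$; pasting $C$ on $A$ and $C^*$ on $A^c$ after $S$ gives $\widehat{C}$ with $\Psi(\widehat{C})\leq\Psi(C^*)\leq w$ by the cost computation above, so $\widehat{C}\in\mathcal{A}_h(w)$, while locality and tower give $\mathcal{E}^g[U(\widehat{C})]=\mathcal{E}^g_0\big[\mathbf{1}_A\mathcal{E}^g_S[U(C)]+\mathbf{1}_{A^c}\mathcal{E}^g_S[U(C^*)]\big]$. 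The strict comparison theorem for BSDEs (Theorem 2.2 in \cite{EPQ}), applied on the positive-probability set $A$, then forces $\mathcal{E}^g[U(\widehat{C})]>\mathcal{E}^g[U(C^*)]=v$, contradicting the global optimality of $C^*$; hence $v_S=\mathcal{E}^g_S[U(C^*)]$.

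I expect the genuinely delicate point to be the interchange of the $\esssup$ with the nonlinear conditional expectation $\mathcal{E}^g_\tau$ in the supermartingale step: it is precisely there that the upward-directedness (hence locality, hence (A4) together with $u(\omega,t,0)=0$) and the continuity from below of $g$-expectations are indispensable, in contrast to the linear case where conditional expectation commutes with suprema over directed families more transparently. The remaining bookkeeping — splitting the Lebesgue--Stieltjes integrals at the stopping times, keeping track of a possible atom of $C$ at $\sigma$, and checking the $\mathcal{F}_\sigma$-measurability of the already-committed cost $\int_{[\tau,\sigma)}\gamma_t\,\d C_t$ — is routine and I would carry it out only to the extent needed to justify the tower/cash-invariance manipulations.
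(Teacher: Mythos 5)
Your proof is correct, and its first two stages coincide with the paper's own Steps 1 and 2: the pasting/locality argument showing that $\{\mathcal{E}^g_S[U(C)]:\, C\in\mathcal{A}_S(C^*)\}$ is upward directed, the nesting $\mathcal{A}_\sigma(C^*)\subseteq\mathcal{A}_\tau(C^*)$ obtained from the tower property plus translation invariance of $\widetilde{\mathcal{E}}^h$, and the supermartingale inequality obtained by passing an increasing maximizing sequence through $\mathcal{E}^g_\tau$. Where you genuinely depart is the final claim that $C^*$ attains $v_S$. The paper re-runs the existence-and-uniqueness argument of Theorem \ref{oct1} conditionally at time $S$ to produce a (unique) optimizer $\widehat{C}^{S}$ of \eqref{oc5}, and then derives a contradiction from strict comparison if $\widehat{C}^{S}$ and $C^*$ are distinguishable on $[S,T]$. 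You avoid constructing any conditional optimizer: you take a member $C$ of the directed family whose conditional value exceeds $\mathcal{E}^g_S[U(C^*)]$ on a set $A\in\mathcal{F}_S$ of positive probability, paste $C$ on $A$ with $C^*$ on $A^c$ after $S$, verify budget feasibility of the pasted plan via locality together with the nesting $\mathcal{A}_S(C^*)\subseteq\mathcal{A}_0(C^*)\subseteq\mathcal{A}_h(w)$, and invoke strict comparison once at time $0$. This buys two things: you need neither a conditional Koml\'os argument (the delicate part of transplanting Theorem \ref{oct1} to time $S$) nor uniqueness of the conditional optimizer, which in the paper tacitly rests on strict concavity and injectivity hypotheses not listed among the proposition's assumptions; your argument runs under exactly the stated hypotheses. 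What the paper's route buys in exchange is the existence of an exact optimizer $\widehat{C}^{S}$ for the conditional problem, which your argument never produces and never needs. Both proofs rest on the same three pillars --- locality coming from (A4), the tower property, and continuity from below of $\mathcal{E}^g_\tau$ along increasing sequences with square-integrable limit (which you rightly flag as the point where directedness is indispensable, and which the paper uses without comment) --- and both implicitly read the statement's $\mathcal{E}^h_S$ and $\widetilde{\mathcal{E}}^g_S$ as the intended $\mathcal{E}^g_S$ and $\widetilde{\mathcal{E}}^h_S$.
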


\begin{proof}
\emph{Step 1.} Let $S\leq T$ be a stopping time. We first show that the family $\{\mathcal{E}^{g}_S[U(C)], C\in\mathcal{A}_S(C^*)\}$ is upward directed, where $S$ is a stopping time. For any $C^i\in \mathcal{A}_S(C^*)$, $i=1,2$, set $C=C^1 \mathds{1}_A+C^2 \mathds{1}_{A^c}$, where $A:=\{\mathcal{E}^{g}_S[U(C^1)]\geq \mathcal{E}^{g}_S[U(C^2)]\}$ is $\mathcal{F}_S$-measurable. Note that
	\begin{displaymath}
	\Psi_S(C)
	=\frac{1}{\gamma_S}\widetilde{\mathcal{E}}^h_S\bigg[\int_S^T \gamma_r \d C^1_r\bigg]\mathds{1}_A+\frac{1}{\gamma_S}\widetilde{\mathcal{E}}^h_S\bigg[\int_S^T \gamma_r \d C^2_r\bigg]\mathds{1}_{A^c}\leq \Psi_S(C^*),
	\end{displaymath}
	which implies that $C\in \mathcal{A}_S(C^*)$. It is also easy to check that for any $s\in[0,T]$, $Y^C_s=Y^{C^1}_s \mathds{1}_A+Y^{C^2}_s \mathds{1}_{A^c}$. Since $u(s,0)=0$, it follows that $u(s,Y_s^c)=u(s,Y^{C^1}_s) \mathds{1}_A+u(s,Y^{C^2}_s)\mathds{1}_{A^c}$. Therefore, we have
	\begin{align*}
	\mathcal{E}^{g}_S[U(C)]=&\mathcal{E}^{g}_S[U(C^1)\mathds{1}_A+U(C^2)\mathds{1}_{A^c}]=\mathcal{E}^{g}_S[U(C^1)]\mathds{1}_A+\mathcal{E}^{g}_S[U(C^2)]\mathds{1}_{A^c}\\=&\mathcal{E}^{g}_S[U(C^1)]\vee\mathcal{E}^{g}_S[U(C^2)];
	\end{align*}
	that is, the family $\{\mathcal{E}^{g}_S[U(C)], C\in\mathcal{A}_S(C^*)\}$ is upward directed. As a consequence, there exists an increasing sequence $\{\mathcal{E}^{g}_S[U(C^n)]\}_{n=1}^\infty$ such that
	\begin{equation}
	\label{eq:increasing}
	v_S=\lim_{n\rightarrow\infty} \mathcal{E}^{g}_S[U(C^n)],
	\end{equation}
	where $\{C^n\}_{n=1}^\infty\subset \mathcal{A}_S(C^*)$.
\vspace{0.15cm}

\emph{Step 2.} For any stopping times $\tau,\sigma$, with $\tau\leq \sigma$, we have $\mathcal{A}_\sigma(C^*)\subset \mathcal{A}_\tau(C^*)$. Indeed, for any $C\in \mathcal{A}_\sigma(C^*)$, a simple calculation yields that
	\begin{align*}
	\Psi_\tau(C)=&\frac{1}{\gamma_\tau}\widetilde{\mathcal{E}}^h_\tau\bigg[\int_\tau^T \gamma_r\d  C_r\bigg]=\frac{1}{\gamma_\tau}\widetilde{\mathcal{E}}^h_\tau\bigg[\int_\tau^\sigma \gamma_r\d C^*_r+\gamma_\sigma\Psi_\sigma(C)\bigg]\\ \leq &\frac{1}{\gamma_\tau}\widetilde{\mathcal{E}}^h_\tau\bigg[\int_\tau^\sigma \gamma_r\d C^*_r+\gamma_\sigma\Psi_\sigma(C^*)\bigg]=\Psi_\tau(C^*).
	\end{align*}
Now, recalling \eqref{eq:increasing} from Step 1 above, for any $\tau\leq S$, it is easy to check that
	\begin{align*}
	\mathcal{E}^{g}_\tau[v_S]&=\mathcal{E}^{g}_\tau\big[\lim_{n\rightarrow\infty}\mathcal{E}_S^{g}[U(C^n)]\big]
=\lim_{n\rightarrow\infty}\mathcal{E}^{g}_\tau\big[\mathcal{E}^{g}_S[U(C^n)]\big] =\lim_{n\rightarrow\infty}\mathcal{E}_\tau^{g}[U(C^n)]\leq \esssup_{C\in\mathcal{A}_\tau(C^*)}\mathcal{E}^{g}_\tau[U(C)]=v_\tau.
	\end{align*}
\vspace{0.15cm}

\emph{Step 3.} It remains to show that $C^*$ is optimal for problem \eqref{oc5}. Proceeding as in the proof of Theorem \ref{oct1}, there exists a unique consumption plan $\widehat{C}^{S}$ which is optimal for problem \eqref{oc5}. Suppose that $\widehat{C}^{S}$ and $C^*$ are distinguishable on $[S,T]$, so that
\begin{displaymath}
\mathcal{E}^g_S[U(\widehat{C}^{S})]>\mathcal{E}^g_S[U(C^*)].
\end{displaymath}
By invoking the strict comparison theorem for $g$-expectation (cf.\ Theorem 2.2 in \cite{EPQ}) we find
\begin{displaymath}
\mathcal{E}^g[U(\widehat{C}^{S})]=\mathcal{E}^g[\mathcal{E}^g_S[U(\widehat{C}^{S})]]>\mathcal{E}^g[\mathcal{E}^g_S[U(C^*)]]
=\mathcal{E}^g[U(C^*)],
\end{displaymath}
which leads to a contradiction.
\end{proof}

We now move on by studying the structure of the optimal consumption plan. As a matter of fact, Theorem \ref{oct1} indicates that the optimal consumption plan $C^*$ exists, while it does not give an explicit form of $C^*$. Inspired by \cite{BR} and the sufficiency of the first-order conditions for optimality, we now construct $C^*$ through a progressively measurable process $L$, called the minimal level of satisfaction, which is the solution to a backward equation (see \eqref{oc9} below).

In the rest of this section we assume the following dynamics for the level of satisfaction.
\begin{description}
	\item[(H3)] The function $\eta:[0,T]\rightarrow\mathbb{R}$ and $\theta:[0,T]^2\rightarrow \mathbb{R}$ are of following forms:
	\begin{displaymath}
	\eta_t=\eta \exp\bigg(-\int_0^t \beta_s \d s\bigg),\ \ \theta_{t,s}=\beta_s \exp\bigg(-\int_s^t\beta_r\d r\bigg), \ \  0\leq s\leq t\leq T,
	\end{displaymath}
	where $\beta=\{\beta_s\}_{s\in[0,T]}$ is a strictly positive, continuous function and $\eta\geq 0$.
\end{description}

For each fixed $\xi^1\in D_g$, $\xi^2\in D_h$, $M>0$ and stopping time $\tau<T$, consider then the backward equation
\begin{equation}\label{oc9}
\E_\tau\bigg[\int_\tau^T \frac{\d \P^1}{\d \P_0}\bigg|_{\mathcal{F}_t}\partial_yu\bigg(t,\sup_{\tau\leq v\leq t}\bigg\{L_v\exp\bigg(-\int_v^t \beta_s\d s\bigg)\bigg\}\bigg)\theta_{t,\tau}\d t\bigg]=M\gamma_\tau \frac{\d{\P^2}}{\d\P^0}\bigg|_{\mathcal{F}_\tau},
\end{equation}
where $\P^i$ is the probability measure whose Girsanov kernel (with respect to $\P_0$) is given by $\xi^i$, $i=1,2$. By employing Theorem 3 in \cite{BE}, it can be shown that for any $\tau < T$ the above equation admits a unique progressively measurable process $L=L^{M,\P^1,{\P^2}}$ with upper right-continuous paths and such that with $L_T=0$. Starting from this we can then construct two processes by setting
\begin{align*}
&Y^{L^{M,\P^1,{\P^2}}}_t:=\exp\bigg(-\int_0^t \beta_s\d s\bigg)\bigg(\eta\vee\sup_{0\leq v\leq t}\bigg\{L_v^{M,\P^1,{\P^2}}\exp\bigg(\int_0^v \beta_s\d s\bigg)\bigg\}\bigg), \quad t\in[0,T],\\
&C^{L^{M,\P^1,{\P^2}}}_t: =\int_0^t Y^{L^{M,\P^1,{\P^2}}}_s\d s+\int_0^t \beta^{-1}_s\d Y^{L^{M,\P^1,{\P^2}}}_s, \quad t\in[0,T], \quad C^{L^{M,\P^1,{\P^2}}}_{0-}=0.
\end{align*}
According to Lemma 3.9 in \cite{BR}, one has that
\begin{description}
	\item[(i)] $Y^{L^{M,\P^1,{\P^2}}}$ is an adapted RCLL process of bounded variation with $Y^{L^{M,\P^1,{\P^2}}}\geq L^{M,\P^1,{\P^2}}$;
	\item[(ii)] $C^{L^{M,\P^1,{\P^2}}}$ is right-continuous, nondecreasing and adapted. In other words, $C^{L^{M,\P^1,{\P^2}}}\in\mathcal{X}$;
	\item[(iii)] The level of satisfaction induced by $C^{L^{M,\P^1,{\P^2}}}$, denoted by $Y^{C^{L^{M,\P^1,{\P^2}}}}$, coincides with $Y^{L^{M,\P^1,{\P^2}}}$ and is minimal in the following sense:
	\begin{displaymath}
	Y^{C^{L^{M,\P^1,{\P^2}}}}_t=Y^{L^{M,\P^1,{\P^2}}}_t=\inf_{C\in\mathcal{X},Y^C\geq L}Y^C_t, \ \ t\in[0,T].
	\end{displaymath}
\end{description}

Following the terminology of \cite{BR}, in the sequel we shall say that the process $C^L$ constructed above is the consumption plan that tracks the level process $L$.

\begin{theorem}
\label{1}
Recall that $f,\ell$ denote the convex dual of the drivers $g$ and $h$, respectively. Suppose that the functions $g,h$ satisfy (A1)-(A4) and let Assumptions (H1)-(H3) hold. Let also $L^{M,\P^1,\P^2}$ be the solution to \eqref{oc9}. If we can find some $\P^i$ with Girsanov kernel (with respect to $\P_0$) $\xi^i$, $i=1,2$, such that
	\begin{align*}
	\mathcal{E}^{g}\bigg[\int_0^T u\big(t,Y^{L^{M,\P^1,{\P^2}}}_t\big)\d t\bigg]&=\E^{\P^1}\bigg[\int_0^T u\big(t,Y^{L^{M,\P^1,{\P^2}}}_t\big)\d t+\int_0^T f(r,\xi^1_r)\d r\bigg],\\
	\widetilde{\mathcal{E}}^h\bigg[\int_0^T\gamma_t\d C^{L^{M,\P^1,{\P^2}}}_t\bigg]&=\E^{{\P^2}}\bigg[\int_0^T \gamma_t\d C^{L^{M,\P^1,{\P^2}}}_t-\int_0^T \ell(r,\xi^2_r)\d r\bigg],
	\end{align*}
then the consumption plan $C^{L^{M,\P^1,\P^2}}$ is optimal for the utility maximization problem \eqref{oc1} with given initial capital $w=\Psi(C^{L^{M,\P^1,\P^2}})$.
\end{theorem}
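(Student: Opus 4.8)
The plan is to verify that the tracking plan $C^*:=C^{L^{M,\P^1,\P^2}}$ fulfils the three sufficient first-order conditions of Theorem \ref{oct2} for the pair of measures $(\P_1,\P_2):=(\P^1,\P^2)$, and then to conclude by invoking that theorem. Writing $L:=L^{M,\P^1,\P^2}$ and recalling property (iii) of the tracking plan, namely $Y^{C^*}=Y^{L}=:Y^*$, one has $U(C^*)=\int_0^T u(t,Y^*_t)\d t$, so the two displayed identities in the statement are exactly the membership relations $\P^1\in\mathcal{P}_1(C^*)$ and $\P^2\in\mathcal{P}_2(C^*)$ demanded by Theorem \ref{oct2}. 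Condition (1) holds by construction, since the initial capital is fixed as $w=\Psi(C^*)=\widetilde{\mathcal{E}}^h[\int_0^T\gamma_t\d C^*_t]$; in particular $C^*\in\mathcal{A}_h(w)$. It thus remains to check conditions (2) and (3).

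For condition (2), I would first pass from $\P_2$ to the reference measure $\P_0$. Introduce the density processes $Z^i_t:=\frac{\d\P^i}{\d\P_0}\big|_{\mathcal{F}_t}$, $i=1,2$. Using the abstract Bayes' rule together with $Z^2_T\frac{\d\P_1}{\d\P_2}=Z^1_T$ and the $\P_0$-martingale property of $Z^1$ (to move $Z^1_T$ inside the $\d s$-integral and replace it by $Z^1_s$), the quantity $\widetilde{\phi}_t$ of Remark \ref{ocl5}(iii) satisfies $Z^2_t\,\widetilde{\phi}_t=\E_t[\int_t^T Z^1_s\,\partial_y u(s,Y^*_s)\,\theta_{s,t}\,\d s]$. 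On the other hand, reading the backward equation \eqref{oc9} at the time $t$ (in the role of $\tau$) gives, after the same rewriting, $\E_t[\int_t^T Z^1_s\,\partial_y u(s,\widehat{Y}^t_s)\,\theta_{s,t}\,\d s]=M\gamma_t Z^2_t$, where $\widehat{Y}^t_s:=\sup_{t\le v\le s}\{L_v\exp(-\int_v^s\beta_r\d r)\}$ is the level of satisfaction restarted at time $t$. Because the supremum defining $Y^*_s=\eta_s\vee\sup_{0\le v\le s}\{L_v\exp(-\int_v^s\beta_r\d r)\}$ runs over the larger index set $[0,s]$ and also competes with $\eta_s\ge 0$, we have $Y^*_s\ge\widehat{Y}^t_s$ for all $s\ge t$; since $u(\omega,t,\cdot)$ is concave, $\partial_y u$ is nonincreasing, so $\partial_y u(s,Y^*_s)\le\partial_y u(s,\widehat{Y}^t_s)$. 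As $Z^1_s,\theta_{s,t}\ge 0$ and $Z^2_t>0$, this yields $\widetilde{\phi}_t\le M\gamma_t$, which is condition (2).

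For condition (3), I would argue exactly as in Remark \ref{ocl5}(iii): invoking the projection identity (Theorem 1.33 in \cite{J79}) as in the proof of Theorem \ref{oct2}, condition (3) is equivalent to $\E^{\P_2}[\int_0^T(\widetilde{\phi}_t-M\gamma_t)\,\d C^*_t]=0$. By condition (2) the integrand is nonpositive, so it suffices to prove $\widetilde{\phi}_t=M\gamma_t$ on the support of $\d C^*$. Here the reflection structure recorded in Lemma 3.9 of \cite{BR} is decisive: $C^*$ increases only on the set $\{Y^*_t=L_t\}$. On that set, both the term $\eta_s$ and the part of the supremum in $Y^*_s$ coming from $v\in[0,t]$ are bounded above by $L_t\exp(-\int_t^s\beta_r\d r)\le\widehat{Y}^t_s$ for every $s\ge t$, whence $Y^*_s=\widehat{Y}^t_s$ on $[t,T]$. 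The inequality used in condition (2) then becomes an equality, so $\widetilde{\phi}_t=M\gamma_t$ wherever $\d C^*$ charges, and condition (3) follows. Theorem \ref{oct2} now gives the optimality of $C^*$.

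I expect the main obstacle to be this last point: reconciling the backward equation \eqref{oc9}, which is written with the restarted supremum $\widehat{Y}^t$, with the first-order condition (3), which is written with the genuine level of satisfaction $Y^*$. Showing that these two coincide over all of $[t,T]$ precisely on the consumption region $\{Y^*=L\}$ is what makes the pointwise inequality of condition (2) tight along $\d C^*$, and it hinges on the explicit Skorokhod-type construction of the tracking plan. By contrast, the Bayes'-rule bookkeeping and the concavity--monotonicity comparison underlying condition (2) are routine.
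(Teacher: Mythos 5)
Your proposal is correct and follows exactly the route the paper intends: the paper omits its own proof by deferring to Theorem 3.13 of \cite{BR}, whose argument is precisely your verification of conditions (1)--(3) of Theorem \ref{oct2} — condition (1) by the choice of $w$, condition (2) by comparing $Y^*_s$ with the restarted supremum $\widehat{Y}^t_s$ in the backward equation \eqref{oc9} via monotonicity of $\partial_y u$, and condition (3) by the flat-off property that $\d C^*$ (equivalently, the running-maximum increment $\beta_t^{-1}\d(Y^*_te^{\int_0^t\beta_s\d s})$) charges only $\{Y^*=L\}$, where that comparison becomes an equality. Your identification of the tightness of condition (2) along the support of $\d C^*$ as the crux matches the structure of the cited proof, so there is nothing substantive to add.
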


\begin{proof}
As the proof follows closely the arguments developed in Theorem 3.13 of \cite{BR}, we omit it in the interest of brevity.
\end{proof}

\begin{remark}
Consider the general utility maximization problem introduced in Remark \ref{general}. Since the sufficiency for optimality still holds as discussed in Remark \ref{general1}, we can then provide the construction for the optimal consumption plan also within such a more general setting. For this purpose, we assume that the dynamics $\eta$ and $\beta$ for the level of satisfaction satisfy (H3). For any constant $M>0$ and any $\P^i\in\mathcal{P}_i$, $i=1,2$, let $L^{M,\P^1,\P^2}$ be the solution to Equation \eqref{oc9} and let $C^{M,\P^1,\P^2}$ be the consumption plan which tracks $L^{M,\P^1,\P^2}$. If $\P^i\in \mathcal{P}^i(C^{M,\P^1,\P^2})$, $i=1,2$, then $C^{M,\P^1,\P^2}$ is optimal for problem \eqref{e71} with initial wealth given by $w=\widetilde{\mathcal{E}}\big[\int_0^T \gamma_t \d C^{M,\P^1,\P^2}_t\big]$.
\end{remark}

\begin{remark}
Existence of the desired minimal level of satisfaction leads to a challenging fixed point problem that we discuss in the following and whose study is left for future research.

Fix the Lagrange multiplier $M$ in Equation \eqref{oc9}. Choose $\xi^{1,1}\in D_g$, $\xi^{2,1}\in D_h$ and let $\P^{i,1}$ be the probability measure whose Girsanov kernel is given by $\xi^{i,1}$, $i=1,2$. Then, there exists a unique solution $L^1$ to Equation \eqref{oc9} with $\P^i=\P^{i,1}$, $i=1,2$. Let $C^1$ be the consumption plan which tracks $L^1$ and $\Gamma(C^1):=\int_0^T \gamma_t \d C_t^1$. Consider then the BSDEs
	\begin{align*}
	&Y^{1,1}_t=U(C^1)+\int_t^T g(s,Z_s^{1,1}) \d s-\int_t^T Z_s^{1,1} \d B_s,\\
	&Y^{2,1}_t=\Gamma(C^1)-\int_t^T h(s,-Z_s^{2,1}) \d s-\int_t^T Z_s^{2,1} \d B_s,
	\end{align*}
	and let $\xi^{1,2}$ and $\xi^{2,2}$ be the solutions to
	\begin{align*}
	&g(s,Z_s^{1,1})-Z_s^{1,1}\xi^{1,2}_s=f(s,\xi^{1,2}_s) \quad \text{and} \quad -h(s,-Z_s^{2,1})-Z_s^{2,1}\xi^{2,2}_s=-\ell(s,\xi^{2,2}_s),
	\end{align*}
	respectively. Then, by Girsanov theorem it is easy to check that
	\begin{align*}
	&\mathcal{E}^g[U(C^1)]=\E^{\P^{1,2}}\bigg[U(C^1)+\int_0^T f(s,\xi^{1,2}_s)\d s\bigg] \quad \text{and} \quad \widetilde{\mathcal{E}}^h[\Gamma(C^1)]=\E^{\P^{2,2}}\bigg[\Gamma(C^1)-\int_0^T \ell(s,\xi^{2,2}_s)\d s\bigg],
	\end{align*}
	where $\P^{i,2}$ is the probability measure with Girsanov kernel $\xi^{i,2}$, $i=1,2$, with respect to $\P_0$.
	
	Defining the mapping $\mathcal{T}:D_g\times D_h\rightarrow D_g\times D_h$ as
	\begin{displaymath}
	\mathcal{T}(\xi^{1,1},\xi^{2,1})=(\xi^{1,2},\xi^{2,2}),
	\end{displaymath}
	we see that if $\mathcal{T}$ has a fixed point $(\xi^1,\xi^2)$, then $C^{\xi^1,\xi^2}$ tracking $L^{\xi^1,\xi^2}$ is optimal for \eqref{oc1}, where $L^{\xi^1,\xi^2}$ is the solution to \eqref{oc9} with $\P^i=\P^{\xi^i}$, $i=1,2$.
\end{remark}


\section{Explicit solution in a stationary homogeneous setting}
\label{sec:explicit solution}

\subsection{Setting and main result}

One can be easily convinced that the sufficient first-order conditions for optimality previously determined still hold when $T=+\infty$. Their proof indeed employs the linear structure of $C \mapsto Y^C$ and the concavity of the instantaneous felicity function, which are clearly not affected by the length of the considered time interval. In this section, we shall use the sufficient optimality conditions in order to provide the explicit solution in an homogeneous setting.

 Consider a financial market with two assets. One of them is a risk-free bond, whose price $S^0$ evolves according to the following equation
\begin{equation}
\label{bond}
\d S^0_t=r S^0_t \d t,
\end{equation}
where $r>0$ is the interest rate. The price for the stock is denoted by $S$ and it satisfies the stochastic differential equation
\begin{equation}
\label{stock}
\d S_t=S^1_t\Big[\mu \d t+ \sigma \d B_t\Big],
\end{equation}
where $\mu$ represents the stock appreciation rates, $\sigma>0$ is the volatility and $B_t$ is a one-dimensional $(\mathcal{F}_t)_t$-Brownian motion. Clearly, there exists a constant $\vartheta\in\mathbb{R}$ such that
 \begin{displaymath}
 \mu-r=\sigma \vartheta.
 \end{displaymath}
 The constant $\vartheta$ is usually referred to as the risk premium.

Imagine now that our agent invests in the financial market and thus selects a portfolio $\pi_t$ at time $t$, where $\pi_t$ is the proportion of her wealth $V_t$ invested in the stock and $\pi^0_t=1- \pi_t$ is the proportion of the wealth invested in the bond. We assume that $\pi$ is predictable, since the agent can only make decisions on the basis of the current amount of available information $\mathcal{F}_t$. The agent can also choose a consumption plan $C \in \mathcal{X}_\infty$, where $C_t$ represents the total amount of consumption made up to time $t$ and
  \begin{align*}
\mathcal{X}_\infty:=\big\{C\,\big|\,& C \textrm{ is the distribution function of a nonnegative optional random measure on $[0,\infty)$}\big\}.
\end{align*}
Also, set $\bar{V}_t:=e^{-rt}V_t$, and suppose that $\limsup_{t\to\infty}\bar{V}_t=0$ a.s.

Let $a,b,a',b'$ be four constants such that $ a'< a<b<b'$. Assume that the risk premia for long and short positions are different and the difference between long and short positions is $a'-a$ (see Example 1.1 in \cite{EPQ}). Then the wealth $V$ associated to the portfolio $\pi$ and consumption plan $C$ evolves as
 \begin{equation}\label{wealth process}
 \d V_t= r V_t \d t +\sigma a' \pi_t V_t \d t+\sigma (a'-a)\pi^-_t V_t \d t +\sigma \pi_t V_t \d B_t- \d C_t,
 \end{equation}
which is clearly equivalent to
\begin{displaymath}
\bar{V}_t= \int_t^\infty \sigma (a\pi^-_s-a' \pi^+_s )\bar{V}_s\d s-\int_t^\infty \sigma \pi_s\bar{V}_s\d B_s +\int_t^\infty e^{-rs}\d C_s.
\end{displaymath}
For any bounded, adapted process $\xi$, set $\varepsilon^\xi_t:=\varepsilon^{1,\xi}_t$, where $\varepsilon^{x_0,\xi}_t:=x_0\exp\big(\int_0^t \xi_s \d B_s-\frac{1}{2}\int_0^t \xi^2_s\d s\big)$ for $x_0>0$. Consequently, we have
\begin{displaymath}
\bar{V}_0=\sup_{\P\in\mathcal{P}^2}\E^\P\left[\int_0^\infty e^{-rs}\d C_s\right],
\end{displaymath}
where \begin{equation}
\label{eq:P2}
\mathcal{P}^2=\Big\{\P^\xi\,\big|\, \xi \textrm{ adapted with values in } [a',a],\, \frac{\d\P^\xi}{\d\P_0}\bigg|_{\mathcal{F}_t}=\varepsilon^\xi_t, 0<t<\infty\Big\},
\end{equation}
which can be interpreted as the set of priors for the cost induced by a consumption plan.

We assume that the felicity function is deterministic and given by
\begin{displaymath}
u(t,y)=e^{-\delta t}\frac{1}{\alpha}y^\alpha,
\end{displaymath}
where $\delta>0$ and $\alpha\in(0,1)$, and $\gamma_t=e^{-rt}$. Moreover, we take a constant interest rate $r>0$ and suppose that the level of satisfaction has dynamics:
\begin{equation}\label{dynamic y}
Y_t^C=\eta e^{-\beta t}+\int_0^t \beta e^{-\beta(t-s)} \d C_s,
\end{equation}
where $\eta,\beta>0$.

Considering now the set of priors for the utility
\begin{equation}
\label{eq:P1}
\mathcal{P}^1:=\Big\{\P^\xi\,\big|\, \xi \textrm{ adapted with values in } [b,b'],\, \frac{\d\P^\xi}{\d\P_0}\bigg|_{\mathcal{F}_t}=\varepsilon^\xi_t, 0<t<\infty\Big\},
\end{equation}
and defining then the nonlinear expectations
\begin{displaymath}
	\mathcal{E}^1[X]:=\inf_{\P\in\mathcal{P}^1}\E^\P[X], \qquad  \mathcal{E}^2[X]:=\sup_{\P\in\mathcal{P}^2}\E^\P[X],
\end{displaymath}
the aim is to solve
\begin{equation}\label{e72}
	\sup_{C\in\mathcal{A}(w)}\mathcal{E}^1\bigg[\int_0^\infty u(t,Y_t^C)\d t\bigg],
\end{equation}
where
$$\mathcal{A}(w):=\Big\{C\in \mathcal{X}_\infty\,\big|\,\mathcal{E}^2\bigg[\int_0^\infty e^{-rt} \d C_t\bigg]\leq w\Big\}.$$

As a matter of fact, we can see from \ref{app} that, by taking $g(z)=b z^+ - b'z^-$ and $h(z)=a' z^+ - az^-$, the representation of $g$-expectation yields
\begin{displaymath}
\mathcal{E}^1[X]=\mathcal{E}^g[X], \quad \text{and} \quad \mathcal{E}^2[X]=\widetilde{\mathcal{E}}^h[X].
\end{displaymath}

In our analysis a crucial role will be played by the backward equation
\begin{equation}
\label{e73}
\E_\tau\bigg[\int_\tau^\infty \beta\exp(-\beta(t-\tau))\varepsilon^{\xi^1}_t \partial_y u\bigg(t,\sup_{\tau\leq v\leq t}\bigg\{L_v \exp(-\beta(t-v))\bigg\}\bigg)\d t\bigg]= M e^{-r\tau} \varepsilon^{\xi^2}_\tau,
\end{equation}
where $\tau$ is any finite stopping time, $M>0$ is a given constant, and $\xi^i$ are the Girsanov kernels (with respect to $\P_0$) such that $\P_i:=\P^{\xi^i}\in\mathcal{P}^i$, $i=1,2$. The solution to \eqref{e73} (if it does exist) is denoted by $L^{M,\P_1,\P_2}$. Moreover, we denote by $C^{{M,\P_1,\P_2}}$ the consumption plan which tracks the level process $L^{M,\P_1,\P_2}$ and by $Y^{{M,\P_1,\P_2}}$ the corresponding level of satisfaction. Similarly to Theorem \ref{1}, we have the following result.

\begin{theorem}\label{t73}
 Suppose that $\P_i\in \mathcal{P}^i(C^{M,\P_1,\P_2})$, $i=1,2$, where for any admissible consumption plan
	\begin{align*}
		&\mathcal{P}^1(C):=\Big\{\P\in\mathcal{P}^1\,\big|\,\mathcal{E}^1\bigg[\int_0^\infty u(t,Y^C_t)\d t\bigg]=\E^\P\bigg[\int_0^\infty u(t,Y^C_t)\d t\bigg]\Big\},\\
		&\mathcal{P}^2(C):=\Big\{\P\in\mathcal{P}^2\,\big|\,\mathcal{E}^2\bigg[\int_0^\infty e^{-rt} \d C_t\bigg]=\E^\P\bigg[\int_0^\infty e^{-rt} \d C_t\bigg]\Big\}.
	\end{align*}
	Then, $C^{M,\P_1,\P_2}$ is an optimal consumption plan for \eqref{e72} with $w=\mathcal{E}^2\big[\int_0^\infty e^{-rt} \d C^{M,\P_1,\P_2}_t\big]$. Consequently, $L^{M,\P_1,\P_2}$ is the optimal minimal level of satisfaction.
\end{theorem}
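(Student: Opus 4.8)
The plan is to verify that the candidate $C^\star:=C^{M,\P_1,\P_2}$, together with the two priors $\P_1,\P_2$ and the multiplier $M$, satisfies the sufficient first-order conditions of Theorem \ref{oct2}. Since $\mathcal{E}^1=\mathcal{E}^g$ and $\mathcal{E}^2=\widetilde{\mathcal{E}}^h$ for the positively homogeneous drivers $g(z)=bz^+-b'z^-$ and $h(z)=a'z^+-az^-$, the associated convex duals $f,\ell$ vanish on the intervals $[b,b']$ and $[a',a]$; hence $\mathcal{P}_i(C)=\mathcal{P}^i(C)$ and the penalty terms in Theorem \ref{oct2} disappear. Moreover, as noted at the beginning of this section, the sufficiency of the FOCs is unaffected by taking $T=+\infty$. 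Thus it suffices to check conditions (1)--(3) of Theorem \ref{oct2} for $C^\star$ and $Y^\star:=Y^{M,\P_1,\P_2}=Y^{L}$, with $L:=L^{M,\P_1,\P_2}$. Condition (1) is immediate from the definition $w=\mathcal{E}^2[\int_0^\infty e^{-rt}\d C^\star_t]=\widetilde{\mathcal{E}}^h[\int_0^\infty e^{-rt}\d C^\star_t]$.

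For condition (2), write $Z^i_t:=\varepsilon^{\xi^i}_t=\frac{\d\P_i}{\d\P_0}|_{\mathcal{F}_t}$ and fix $t\in[0,\infty)$. Using the continuous-time Bayes rule together with the $\P_0$-martingale property of $Z^1$, one obtains
\begin{displaymath}
\E^{\P_2}_t\bigg[\frac{\d\P_1}{\d\P_2}\int_t^\infty \partial_y u(s,Y^\star_s)\theta_{s,t}\,\d s\bigg]=\frac{1}{Z^2_t}\,\E_t\bigg[\int_t^\infty \theta_{s,t}\,Z^1_s\,\partial_y u(s,Y^\star_s)\,\d s\bigg].
\end{displaymath}
On the other hand, evaluating the backward equation \eqref{e73} at the deterministic time $\tau=t$ gives exactly
\begin{displaymath}
\frac{1}{Z^2_t}\,\E_t\bigg[\int_t^\infty \theta_{s,t}\,Z^1_s\,\partial_y u\big(s,\Lambda_t(s)\big)\,\d s\bigg]=M\gamma_t,\qquad \Lambda_t(s):=\sup_{t\le v\le s}\big\{L_v e^{-\beta(s-v)}\big\}.
\end{displaymath}
Since $Y^\star_s=\eta e^{-\beta s}\vee\sup_{0\le v\le s}\{L_v e^{-\beta(s-v)}\}\ge \Lambda_t(s)$ and $y\mapsto\partial_y u(s,y)=e^{-\delta s}y^{\alpha-1}$ is decreasing (recall $\alpha\in(0,1)$), we get $\partial_y u(s,Y^\star_s)\le\partial_y u(s,\Lambda_t(s))$, and comparing the two displays yields condition (2).

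For condition (3), set $\widetilde\phi_t:=\E^{\P_2}_t[\frac{\d\P_1}{\d\P_2}\int_t^\infty\partial_y u(s,Y^\star_s)\theta_{s,t}\,\d s]$. Changing measure from $\P_1$ to $\P_2$ and conditioning at each $t$ (Theorem 1.33 in \cite{J79}), the left-hand side of (3) equals $\E^{\P_2}[\int_0^\infty\widetilde\phi_t\,\d C^\star_t]$, so it suffices to prove $\widetilde\phi_t=M\gamma_t$ for $\d C^\star$-a.e.\ $t$. The key structural fact, coming from Lemma 3.9 in \cite{BR} and properties (i)--(iii) of the tracking plan recalled in Section \ref{sec:structure}, is that along idle periods $Y^\star$ decays at the exact mean-reversion rate $\beta$, so $\d C^\star=0$ there, and $C^\star$ increases only at the reflection times, namely the times $t$ at which $L_v e^{\beta v}$ attains a new running maximum and hence $Y^\star_t=L_t$. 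At such a time the supremum defining $Y^\star_s$ for every $s\ge t$ is attained on $[t,s]$, so that $\Lambda_t(s)=Y^\star_s$ for all $s\ge t$; i.e.\ the backward-equation argument coincides with the realized satisfaction $\d C^\star$-a.e. Comparing once more with \eqref{e73} gives $\widetilde\phi_t=M\gamma_t$ there, whence $\E^{\P_2}[\int_0^\infty\widetilde\phi_t\,\d C^\star_t]=M\E^{\P_2}[\int_0^\infty\gamma_t\,\d C^\star_t]$, which is condition (3). With (1)--(3) established, Theorem \ref{oct2} (at $T=+\infty$) yields optimality of $C^\star$ for \eqref{e72}, and consequently $L^{M,\P_1,\P_2}$ is the optimal minimal level of satisfaction.

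The main obstacle is precisely the pathwise identity $\Lambda_t(s)=Y^\star_s$ on the support of $\d C^\star$, i.e.\ the careful description of the tracking mechanism that turns the backward equation \eqref{e73} into the complementary-slackness equality (3). This rests on the fine regularity of $C^{L}$ and $Y^{L}$ from \cite{BR} and must be combined with infinite-horizon integrability and a transversality argument (guaranteed here by $\delta,r>0$, $\alpha\in(0,1)$, and $\limsup_{t\to\infty}\bar V_t=0$) ensuring that all the integrals converge and the square-integrability hypothesis underlying Theorem \ref{oct2} holds. As the remaining steps parallel those of Theorem \ref{1} and of Theorem 3.13 in \cite{BR}, I would organize the write-up by first recording these structural properties and then performing the three verifications above.
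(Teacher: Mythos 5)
Your proposal is correct and follows essentially the same route as the paper: the paper proves Theorem \ref{t73} by pointing to Theorem \ref{1}, whose omitted proof is exactly the verification of the sufficient FOCs of Theorem \ref{oct2} via the backward equation \eqref{e73} and the tracking structure of $C^{L}$, as in Theorem 3.13 of \cite{BR}. Your write-up in fact supplies the details the paper leaves implicit (vanishing convex duals for the positively homogeneous drivers, the Bayes-rule reduction for condition (2), and the identity $\Lambda_t(s)=Y^\star_s$ on the support of $\d C^\star$ for condition (3)), all of which are sound.
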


We can now provide the main result of this section.
\begin{theorem}
\label{t74}
Suppose that $\delta>\alpha r+\frac{\alpha(a-b)^2}{2(1-\alpha)}$. The measures $\P^b$ and $\P^a$ realize the worst-case scenarios for utility and cost of consumption, respectively. Moreover, the optimal minimal level of satisfaction $L^K$ is given by
\begin{displaymath}
L_t^K=\bigg(K e^{(\delta-r)t}\frac{\varepsilon^{a}_t}{\varepsilon^b_t}\bigg)^{\frac{1}{\alpha-1}}=K^{\frac{1}{\alpha-1}}
\exp\bigg\{\frac{a-b}{\alpha-1}B_t-\frac{1}{2(\alpha-1)}[(a^2-b^2)-2(\delta-r)]t\bigg\},
\end{displaymath}
where $K$ is a suitable constant determined by the initial wealth $w$.
Then, the optimal consumption plan is such that
$$C^*_t := C^K_t = \int_0^t e^{-\beta s} \d \overline{C}^*_s,$$
where
\begin{displaymath}
\overline{C}^*_t := \sup_{0 \leq s \leq t}\big(\frac{L_s^K - \eta e^{-\beta s}}{e^{-\beta s}}\Big) \vee 0,\qquad \overline{C}^*_{0-}=0.
\end{displaymath}
Consequently, the optimal level of satisfaction is
$$Y^*_t :=Y^K_t = e^{-\beta t} \big( \eta \vee \sup_{0 \leq s \leq t}(L^K_s e^{\beta s})\big).$$
\end{theorem}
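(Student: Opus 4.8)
The plan is to verify Theorem \ref{t74} by exhibiting the triple $(L^K,\P^b,\P^a)$ and checking that it meets the hypotheses of Theorem \ref{t73}, namely that $\P^b\in\mathcal{P}^1(C^K)$ and $\P^a\in\mathcal{P}^2(C^K)$, so that $C^K$ is optimal. The first task is to confirm that the proposed $L^K_t$ actually solves the backward equation \eqref{e73} with $\xi^1\equiv b$ and $\xi^2\equiv a$. With the power felicity $u(t,y)=e^{-\delta t}\alpha^{-1}y^\alpha$ one has $\partial_y u(t,y)=e^{-\delta t}y^{\alpha-1}$, and the candidate $L^K_t=(Ke^{(\delta-r)t}\varepsilon^a_t/\varepsilon^b_t)^{1/(\alpha-1)}$ is designed precisely so that the integrand collapses: I would first compute the running supremum $\sup_{\tau\le v\le t}\{L_v e^{-\beta(t-v)}\}$ and argue, using the explicit drift structure of $L^K$ implied by the standing assumption $\delta>\alpha r+\tfrac{\alpha(a-b)^2}{2(1-\alpha)}$, that the supremum is attained in a controlled way (essentially that $L^K e^{\beta v}$ has the right monotonicity so the $\sup$ plugged into $\partial_y u$ reproduces a clean exponential). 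Substituting into the left side of \eqref{e73}, the factor $\varepsilon^b_t\,\partial_y u(t,L^K_t)$ should reduce to a deterministic multiple of $e^{-\beta t}$ times $Me^{-r\tau}\varepsilon^a_\tau$ after taking the conditional expectation and performing the $t$-integral; the drift condition guarantees the resulting time-integral $\int_\tau^\infty$ converges.

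Next I would verify the two worst-case identities required by $\mathcal{P}^1(C^K)$ and $\mathcal{P}^2(C^K)$. For the cost side, recall from \eqref{eq:P2} that $\mathcal{E}^2[X]=\sup_{\xi\in[a',a]}\E^{\P^\xi}[X]=\widetilde{\mathcal{E}}^h[X]$ with $h(z)=a'z^+-az^-$. Since $C^K$ is nondecreasing, the integrand $\int_0^\infty e^{-rt}\d C^K_t$ is nonnegative, and I would argue that the extremal kernel in the representation of $\widetilde{\mathcal{E}}^h$ is the constant $\xi\equiv a$ — intuitively the largest admissible kernel maximizes the cost of a purely increasing consumption stream. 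Concretely, because $h$ is piecewise linear with $\widetilde{h}(z)=-h(-z)=az^+-a'z^-$, the convex dual $\ell$ vanishes on the relevant kernel, so $\P^a\in\mathcal{P}^2(C^K)$ reduces to checking the first-order/complementary-slackness structure already encoded in \eqref{e73}. Symmetrically, for the utility side I would show $\P^b\in\mathcal{P}^1(C^K)$: with $g(z)=bz^+-b'z^-$ the smallest kernel $b$ realizes the infimum $\mathcal{E}^1[U(C^K)]=\inf_{\xi\in[b,b']}\E^{\P^\xi}[U(C^K)]$, again because $U(C^K)\ge 0$ and $g$ is piecewise linear with dual $f$ vanishing at $\xi\equiv b$.

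Once both membership conditions $\P^b\in\mathcal{P}^1(C^K)$ and $\P^a\in\mathcal{P}^2(C^K)$ are established, Theorem \ref{t73} applies directly and delivers optimality of $C^K=C^{M,\P^b,\P^a}$, after identifying the Lagrange multiplier $M$ with the normalizing constant $K$ and calibrating $K$ so that $w=\mathcal{E}^2[\int_0^\infty e^{-rt}\d C^K_t]$. The formulas for $\overline{C}^*_t$, $C^*_t$ and $Y^*_t$ then follow mechanically from the tracking construction recalled before Theorem \ref{1}: with (H3) specialized to constant $\beta$ one has $Y^{L}_t=e^{-\beta t}(\eta\vee\sup_{0\le s\le t}\{L_s e^{\beta s}\})$, and the consumption plan tracking $L$ is $C^L_t=\int_0^t Y^L_s\,\d s+\int_0^t\beta^{-1}\d Y^L_s$, which after the change of variables $\overline{C}^*_t=\sup_{0\le s\le t}\big((L^K_s-\eta e^{-\beta s})/e^{-\beta s}\big)\vee 0$ reduces to the stated $C^*_t=\int_0^t e^{-\beta s}\d\overline{C}^*_s$.

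The main obstacle I anticipate is the first step: verifying that $L^K$ genuinely solves \eqref{e73}, and in particular handling the running supremum inside $\partial_y u$. One must show the drift of $\log(L^K_t e^{\beta t})$ has the correct sign so that, along the optimal trajectory, the satisfaction is kept exactly at the minimal level and the $\sup$ simplifies; this is exactly where the hypothesis $\delta>\alpha r+\tfrac{\alpha(a-b)^2}{2(1-\alpha)}$ enters, guaranteeing both the convergence of the infinite-horizon integral and the right monotonicity. The supporting estimates are essentially a stationary, infinite-horizon specialization of Theorem 3 in \cite{BE} and Lemma 3.9 in \cite{BR}, so the argument should be structurally parallel to Theorem \ref{1}; the delicate part is purely the explicit computation confirming that the guessed geometric-Brownian form of $L^K$ is self-consistent with the backward equation.
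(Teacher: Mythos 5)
Your overall strategy is the same as the paper's: solve the backward equation \eqref{e73} explicitly for the constant kernels $\xi^1\equiv b$, $\xi^2\equiv a$, verify that $\P^b\in\mathcal{P}^1(C^K)$ and $\P^a\in\mathcal{P}^2(C^K)$, invoke Theorem \ref{t73}, and calibrate $K$ to the budget. However, the central verification step --- that these two measures really are the worst cases --- is justified by arguments that do not work. You claim the extremal kernels are identified ``because $U(C^K)\geq 0$'' and ``because the convex dual vanishes at $\xi\equiv a$ (resp.\ $\xi\equiv b$)''. Neither reason is valid: for $g(z)=bz^+-b'z^-$ and $h(z)=a'z^+-az^-$ the duals $f,\ell$ vanish on the \emph{entire} intervals $[b,b']$ and $[a',a]$, so vanishing of the penalty singles out no kernel at all; and nonnegativity of a random variable says nothing about which equivalent measure minimizes or maximizes its expectation (a consumption stream that increases when $B$ is low would have its cost maximized by a \emph{negative}-drift kernel). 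The property that actually pins down $\P^b$ and $\P^a$ --- and which the paper isolates in Lemma \ref{l72} --- is that $U(C^K)$ and $\int_0^\infty e^{-rt}\d C^K_t$ are \emph{monotone functionals of the Brownian path}: after a Girsanov change of measure, $\E^{\P^\xi}[\,\cdot\,]$ becomes a $\P_0$-expectation in which $\xi$ enters only through the drift term $\theta\int_0^v\xi_s\,\d s$ inside the running supremum defining $Y^K$, and since $\theta=\frac{b-a}{1-\alpha}>0$ (this is where the ordering $a<b$ is used) the integrand is pointwise increasing in $\xi$. Hence the infimum of expected utility over adapted kernels in $[b,b']$ is attained at the constant $b$ and the supremum of expected cost over $[a',a]$ at the constant $a$. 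Your proposal never invokes this structure, and membership in $\mathcal{P}^i(C^K)$ cannot be ``reduced to complementary slackness already encoded in \eqref{e73}'': it is a separate hypothesis of Theorem \ref{t73} that has to be checked directly against the specific plan $C^K$.

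Two further points. First, the monotonicity argument only yields the claim once one knows the extremal values are \emph{finite}; this is where the hypothesis $\delta>\alpha r+\frac{\alpha(a-b)^2}{2(1-\alpha)}$ enters in the paper (it forces the relevant roots $x^\alpha_+(b),x_+(a),x_+(a')$ of the quadratics \eqref{e75}--\eqref{e76} to exceed $1$, making \eqref{expected utility}--\eqref{expected cost} finite), and on the cost side one additionally needs the infinite-horizon integration by parts \eqref{e77} together with the transversality property $\lim_{T\to\infty}e^{-rT}\varepsilon^{\xi^2}_TY^K_T=0$ to express $\psi^{\xi^2}$ through $\widetilde{\psi}^{\xi^2}$; your proposal omits both. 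Second, your mechanism for verifying that $L^K$ solves \eqref{e73} is also off: you appeal to a monotonicity of $L^K_ve^{\beta v}$, but $L^K$ is of geometric-Brownian type and this process is not monotone. What makes the computation close (Lemma \ref{l71}) is stationarity: since $\alpha-1<0$ the supremum inside $\partial_y u$ becomes an infimum of explicit exponentials, and by the strong Markov property the conditional expectation of this functional of future increments is a deterministic constant, which is then absorbed into the relation between $K$ and the Lagrange multiplier $M$. With these repairs the argument does go through, but as written the proof of the worst-case identification --- the heart of the theorem --- is missing.
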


\begin{remark}
\label{rem:deltacond}
We observe that our condition $\delta>\alpha r+\frac{\alpha(a-b)^2}{2(1-\alpha)}$ is consistent with that posed in Theorem 4.7 of \cite{BR}, where
\begin{equation}
\label{BR constraint}
\delta> \alpha r+(1-\alpha)\pi\left(\frac{\alpha\theta'}{1-\alpha}\right)+\alpha\pi(-\theta'),
\end{equation}
with $\pi(\cdot)$ being the Laplace exponent of a L\'{e}vy process $X$ and $\theta'$ the market price of risk. In order to see this, notice that, under a measure $\P^{\xi_1} \in \mathcal{P}^1$, \eqref{e73} rewrites as
\begin{equation}
\label{e73-bis}
\E^{\xi_1}_\tau\bigg[\int_\tau^\infty \beta\exp(-\beta(t-\tau))\partial_y u\bigg(t,\sup_{\tau\leq v\leq t}\bigg\{L_v \exp(-\beta(t-v))\bigg\}\bigg)\d t\bigg]= M e^{-r\tau} \frac{\varepsilon^{\xi^2}_\tau}{\varepsilon^{\xi^1}_\tau}.
\end{equation}
Then, taking $\xi_1\equiv b$ and $\xi_2\equiv a$, the latter is exactly of the form of Equation (17) in \cite{BR}, upon setting (now under $\P^b$)
$$\psi_t = e^{-rt + (a-b)B^b_t - \frac{1}{2}(a-b)^2 t}, \quad t\geq0.$$
Here, $B^b$ is a standard Brownian motion under $\P^b$. In particular, it follows that in our case $\theta'$ of \eqref{BR constraint} is such that $\theta'=a-b$, and simple algebra shows that the right-hand side of \eqref{BR constraint} indeed becomes ours $\alpha r+\frac{\alpha(a-b)^2}{2(1-\alpha)}$.

As a further consequence, we can argue as in the proof of Theorem 4.7 of \cite{BR} and prove that the condition $\delta>\alpha r+\frac{\alpha(a-b)^2}{2(1-\alpha)}$ is in fact also necessary to have a well-posed optimal consumption problem (in the sense that, without that condition one can construct a consumption plan that has finite cost but induces infinite utility). 
\end{remark}


\subsection{On the proof of Theorem \ref{t74}}
\label{proofofthm}

The proof of Theorem \ref{t74} will be organized as follows. First, for any fixed $\P^{\xi^i}\in\mathcal{P}^i$ with $\xi^i$ being a constant, $i=1,2$, under the considered parameters' constellation we solve  backward equation \eqref{e73} explicitly. Then, we verify that $\P^a$ is the largest-cost probability and $P^b$ is the lowest-utility probability for the consumption $C^K$ that tracks $L^K$ defined in Theorem \ref{t74}.

The proof of the next result exploits the time-homogeneity of our setting. It can be found in \ref{appB} for the sake of completeness.
\begin{lemma}
\label{l71}
	For any fixed constants $M$, $\xi^i$ and probability measures $\P^i:=\P^{\xi^i}$, $i=1,2$, the solution to Equation \eqref{e73} is
	\begin{displaymath}
		L_t:=L^{M,\xi^1,\xi^2}_t=\bigg(K e^{(\delta-r)t}\frac{\varepsilon^{\xi^2}_t}{\varepsilon^{\xi^1}_t}\bigg)^{\frac{1}{\alpha-1}},
	\end{displaymath}
	where $K=K(M)$ is the constant satisfying
	\begin{displaymath}
		K\beta \E\bigg[\int_0^\infty e^{-(\delta+\alpha\beta)t}\inf_{0\leq v\leq t}\bigg\{e^{(\delta+\beta(\alpha-1)-r)v}\varepsilon^{\xi^2}_v \frac{\varepsilon^{\xi^1}_t}{\varepsilon^{\xi^1}_v}\bigg\}\d t\bigg]=M.
	\end{displaymath}
\end{lemma}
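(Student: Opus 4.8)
The plan is to verify directly that the proposed $L$ solves \eqref{e73} and then to invoke the uniqueness assertion of Theorem 3 in \cite{BE} (applied on the stochastic interval $[\tau,\infty)$) to conclude that it is \emph{the} solution. Since $u(t,y)=e^{-\delta t}y^\alpha/\alpha$, we have $\partial_y u(t,y)=e^{-\delta t}y^{\alpha-1}$, so the whole task reduces to evaluating the left-hand side of \eqref{e73} on the candidate and checking that it equals $M e^{-r\tau}\varepsilon^{\xi^2}_\tau$.

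First I would handle the inner running supremum. Writing $L_v=(K\Phi(v))^{1/(\alpha-1)}$ with $\Phi(v):=e^{(\delta-r)v}\varepsilon^{\xi^2}_v/\varepsilon^{\xi^1}_v$, and using the identity $e^{\beta v}=\big(e^{\beta(\alpha-1)v}\big)^{1/(\alpha-1)}$, one obtains
$$L_v e^{-\beta(t-v)} = e^{-\beta t}K^{1/(\alpha-1)}\left(\Phi(v)e^{\beta(\alpha-1)v}\right)^{1/(\alpha-1)}.$$
Because $\alpha\in(0,1)$ forces $1/(\alpha-1)<0$, the map $x\mapsto x^{1/(\alpha-1)}$ is decreasing, so the supremum over $v\in[\tau,t]$ turns into an infimum,
$$\sup_{\tau\le v\le t}L_v e^{-\beta(t-v)} = e^{-\beta t}K^{1/(\alpha-1)}\left(\inf_{\tau\le v\le t}\Phi(v)e^{\beta(\alpha-1)v}\right)^{1/(\alpha-1)}.$$
Raising this to the power $\alpha-1$ inside $\partial_y u$ undoes the fractional exponents; after collecting the deterministic factors $e^{-\delta t}$, $e^{-\beta(\alpha-1)t}$ together with the $e^{-\beta(t-\tau)}$ and $\varepsilon^{\xi^1}_t$ already present in \eqref{e73}, the prefactor collapses to $K\beta\,e^{\beta\tau}e^{-(\delta+\alpha\beta)t}\varepsilon^{\xi^1}_t$, while the infimum reads $\inf_{\tau\le v\le t}e^{(\delta+\beta(\alpha-1)-r)v}\varepsilon^{\xi^2}_v/\varepsilon^{\xi^1}_v$. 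This collection of exponentials is where care is needed.

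The heart of the argument is a time-homogeneity (strong Markov) reduction. I would substitute $t=\tau+s$, $v=\tau+\rho$ and use that, for constant $\xi$, the ratio $\varepsilon^\xi_{\tau+s}/\varepsilon^\xi_\tau=\exp\!\big(\xi(B_{\tau+s}-B_\tau)-\tfrac12\xi^2 s\big)$ depends only on the post-$\tau$ increments of $B$, hence is independent of $\mathcal{F}_\tau$ and has the law of $\varepsilon^\xi_s$. Factoring the $\mathcal{F}_\tau$-measurable quantities $\varepsilon^{\xi^1}_\tau,\varepsilon^{\xi^2}_\tau$ out of $\varepsilon^{\xi^2}_v/\varepsilon^{\xi^1}_v$ and out of $\varepsilon^{\xi^1}_t$ leaves a prefactor $\varepsilon^{\xi^2}_\tau$ multiplying integrand-increments that are $\mathcal{F}_\tau$-independent, so the conditional expectation $\E_\tau[\cdot]$ becomes an unconditional one. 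Moving $\varepsilon^{\xi^1}_s$ inside the infimum recovers exactly the integrand $e^{-(\delta+\alpha\beta)s}\inf_{0\le\rho\le s}\{e^{(\delta+\beta(\alpha-1)-r)\rho}\varepsilon^{\xi^2}_\rho\,\varepsilon^{\xi^1}_s/\varepsilon^{\xi^1}_\rho\}$ that defines $K$. What remains is purely deterministic: the accumulated exponent $\beta\tau-(\delta+\alpha\beta)\tau+(\delta-r+\beta(\alpha-1))\tau$ must simplify to $-r\tau$, which it does once the $\delta$- and $\beta$-terms cancel. Invoking the defining relation for $K$ then produces precisely $M e^{-r\tau}\varepsilon^{\xi^2}_\tau$, as required.

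The main obstacle I anticipate is twofold. First, the justification of the strong-Markov factorization — namely that the post-$\tau$ increments of $B$ are independent of $\mathcal{F}_\tau$ and that the infimum over $v$ splits as a deterministic $\mathcal{F}_\tau$-measurable factor times an infimum over the shifted increments — must be made rigorous, and carries the whole computation. Second, one must ensure that the expectation defining $K$ is finite and strictly positive, so that $K=K(M)\in(0,\infty)$ is well defined; this is exactly where the standing parameter restriction enters (which in the worst-case constellation $\xi^1\equiv b$, $\xi^2\equiv a$ becomes $\delta>\alpha r+\alpha(a-b)^2/(2(1-\alpha))$), guaranteeing that the exponential integrand is integrable as $s\to\infty$.
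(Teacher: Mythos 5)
Your proposal is correct and follows essentially the same route as the paper's proof: substitute the candidate $L$, use that $x\mapsto x^{\alpha-1}$ is decreasing to turn the running supremum into an infimum, shift time by $\tau$ and apply the strong Markov property (independence of post-$\tau$ increments) to factor out $\varepsilon^{\xi^2}_\tau$ and reduce $\E_\tau$ to an unconditional expectation, after which the deterministic exponents collapse to $e^{-r\tau}$ and the defining relation for $K$ yields $Me^{-r\tau}\varepsilon^{\xi^2}_\tau$. Your additional remarks on uniqueness via Theorem 3 of \cite{BE} and on finiteness of the expectation defining $K$ are consistent with how the paper handles these points (in the surrounding text and in Lemma \ref{l72}, respectively).
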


In order to simplify the notation, set
\begin{displaymath}
\theta:=\frac{b-a}{1-\alpha}, \qquad \lambda:=\frac{1}{2(\alpha-1)}[(a^2-b^2)-2(\delta-r)],
\end{displaymath}
and notice that $\theta>0$.

For any constant $\xi$, let $x^{\alpha}_+(\xi)$ and $x_+(\xi)$ be the largest solutions to the equations $h^{\alpha,\xi}(x)=0$ and $h^\xi(x)=0$, respectively, where
\begin{equation}
\label{e75}
h^{\alpha,\xi}(x):=	\frac{1}{2}\alpha^2\theta^2x^2-\alpha(\lambda-\beta-\theta\xi)x-(\delta+\alpha\beta), \quad x \in \mathbb{R},
\end{equation}
and
\begin{equation}
\label{e76}
h^\xi(x)=\frac{1}{2}\theta^2x^2-(\lambda-\beta-\theta\xi)x-(r+\beta), \quad x \in \mathbb{R}.
\end{equation}
Let $C^K$ be the consumption plan which tracks $L^K$ defined in Theorem \ref{t74} and $Y^K$ be the corresponding level of satisfaction. By the necessary characterization of the optimal consumption plan under a single prior (cf.\ \cite{BR}), the plan $C^K$ is optimal for the linear problem
\begin{equation}\label{linear}
v^{a,b}:=\sup_{C\in \mathcal{A}^a(w)}\E^b\left[\int_0^\infty u(t,Y^C_t)\d t\right],
\end{equation}
where
\begin{equation}
\label{eq:Aaw}
\mathcal{A}^a(w):=\Big\{C\in\mathcal{X}\,\big|\,\E^a\bigg[\int_0^\infty e^{-rt} \d C_t\bigg]\leq w\Big\},
\end{equation}
and $\E^a$ and $\E^b$ are the expectations taken under $\P^a$, $\P^b$, respectively. Besides, proceeding similarly to \cite{BR} (see Equation (38) therein and the following equation for $V(C^K)$) , the expected utility and expected cost associated with consumption plan $C^K$ can be explicitly given by
 \begin{equation}
\label{expected utility}
 \phi^b(\eta):=\E^b\left[\int_0^\infty u(t,Y^K_t)\d t\right]=\frac{1}{\alpha(\delta+\alpha\beta)}\begin{cases}
 \eta^\alpha+\frac{1}{x^\alpha_+(b)-1}K^{\frac{\alpha x_+^\alpha(b)}{\alpha-1}}\eta^{\alpha(1-x_+^\alpha(b))}, &\eta> K^{\frac{1}{\alpha-1}};\\
 \frac{x^\alpha_+(b)}{x^\alpha_+(b)-1}K^{\frac{\alpha}{\alpha-1}}, &\eta\leq K^{\frac{1}{\alpha-1}},
 \end{cases}
 \end{equation}
 and
 \begin{equation}
\label{expected cost}
 \psi^a(\eta):=\E^a\left[\int_0^\infty e^{-rt}\d C^K_t\right]=\frac{1}{\beta}\begin{cases}
 \frac{1}{x_+(a)-1}K^{\frac{x_+(a)}{\alpha-1}}\eta^{1-x_+(a)}, &\eta> K^{\frac{1}{\alpha-1}};\\
 \frac{x_+(a)}{x_+(a)-1}K^{\frac{1}{\alpha-1}}-\eta, &\eta\leq K^{\frac{1}{\alpha-1}}.
 \end{cases}
 \end{equation}

For any $\eta>0$, set
\begin{equation*}
\phi^{\xi^1}(\eta):=\E\bigg[\int_0^\infty \varepsilon^{\xi^1}_t u(t,Y_t^K)\d t\bigg] \quad \text{and} \quad \psi^{\xi^2}(\eta):=\E\bigg[\int_0^\infty e^{-rt}\varepsilon^{\xi^2}_t \d C_t^K\bigg].
\end{equation*}

\begin{lemma}\label{l72}
	Under the same assumptions of Theorem \ref{t74}, we have
	\begin{align*}
		\phi^b(\eta)=\inf_{\xi^1\in [b',b], \textrm{adapted}}\phi^{\xi^1}(\eta), \qquad \psi^a(\eta)=\sup_{\xi^2\in [a,a'], \textrm{adapted}}\psi^{\xi^2}(\eta).
	\end{align*} Besides, the values are finite.
\end{lemma}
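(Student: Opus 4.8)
The plan is to show that the two constant worst-case kernels $\xi^1\equiv b$ and $\xi^2\equiv a$ are extremal: $\xi^1\equiv b$ minimizes the utility functional $\phi^{\xi^1}$ over adapted kernels valued in $[b,b']$ (so that $\P^{\xi^1}\in\mathcal{P}^1$), while $\xi^2\equiv a$ maximizes the cost functional $\psi^{\xi^2}$ over adapted kernels valued in $[a',a]$ (so that $\P^{\xi^2}\in\mathcal{P}^2$). Since $\P^b$ and $\P^a$ are themselves admissible, one inequality in each identity is trivial, and the content is to prove $\phi^{\xi^1}(\eta)\ge\phi^b(\eta)$ and $\psi^{\xi^2}(\eta)\le\psi^a(\eta)$ for arbitrary admissible $\xi^1,\xi^2$. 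Finiteness of the two extremal values is read off directly from the closed forms \eqref{expected utility}--\eqref{expected cost}: under the standing assumption $\delta>\alpha r+\frac{\alpha(a-b)^2}{2(1-\alpha)}$ one checks that the largest roots satisfy $x^\alpha_+(b)>1$ and $x_+(a)>1$, which is precisely what renders $\phi^b(\eta)$ and $\psi^a(\eta)$ finite.

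The engine of the proof is a monotonicity-plus-coupling observation. Writing $L^K_se^{\beta s}=K^{1/(\alpha-1)}\exp(\theta B_s-\lambda s+\beta s)$ with $\theta>0$, both $Y^K_t$ and $\overline{C}^*_t$ are nondecreasing functionals of the path $(B_s)_{s\le t}$, and $u(t,\cdot)$ is increasing. For the utility I would work one time-slice at a time: since $\xi^1$ is bounded, $\varepsilon^{\xi^1}_t$ is a genuine density on $\mathcal{F}_t$ and $\E[\varepsilon^{\xi^1}_t u(t,Y^K_t)]=\E^{\P^{\xi^1}}[u(t,Y^K_t)]$. Passing to the $\P^b$-Brownian motion $B^b$, one has $\frac{\d\P^{\xi^1}}{\d\P^b}\big|_{\mathcal{F}_t}=\frac{\varepsilon^{\xi^1}_t}{\varepsilon^b_t}=\exp\big(\int_0^t(\xi^1_s-b)\,\d B^b_s-\tfrac12\int_0^t(\xi^1_s-b)^2\,\d s\big)$ with $\xi^1-b\ge0$, so under $\P^{\xi^1}$ the process $\widehat B_s:=B^b_s-\int_0^s(\xi^1_r-b)\,\d r$ is a Brownian motion and $B_s=\widehat B_s+\int_0^s\xi^1_r\,\d r\ge\widehat B_s+bs$ pathwise. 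The comparison path $B'_s:=\widehat B_s+bs$ is, under $\P^{\xi^1}$, a drift-$b$ Brownian motion, hence has the law of $B$ under $\P^b$; monotonicity gives $u(t,Y^K_t(B))\ge u(t,Y^K_t(B'))$, and taking $\P^{\xi^1}$-expectations yields $\E^{\P^{\xi^1}}[u(t,Y^K_t)]\ge\E^{\P^b}[u(t,Y^K_t)]$. Integrating in $t$ delivers $\phi^{\xi^1}(\eta)\ge\phi^b(\eta)$.

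For the cost the same coupling works, but the singular integrator must be tamed first. After changing measure one has $\psi^{\xi^2}(\eta)=\E^{\P^{\xi^2}}\big[\int_0^\infty e^{-(r+\beta)t}\,\d\overline{C}^*_t\big]$, and a pathwise integration by parts rewrites the inner integral as $(r+\beta)\int_0^\infty e^{-(r+\beta)t}\overline{C}^*_t\,\d t$ once the boundary term $e^{-(r+\beta)T}\overline{C}^*_T$ is shown to vanish as $T\to\infty$. This is the step I expect to be the main obstacle: being an infinite-horizon transversality condition, it is exactly where the growth assumption on $\delta$ (equivalently the finiteness coming from \eqref{expected cost}) must be invoked to dominate $e^{-(r+\beta)T}\overline{C}^*_T$ and justify both the limit and the use of Fubini. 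With the integrator now the deterministic measure $e^{-(r+\beta)t}\,\d t$, the coupling runs as before but with reversed sign: since $\xi^2-a\le0$, under $\P^{\xi^2}$ we get $B_s=\widehat B_s+\int_0^s\xi^2_r\,\d r\le\widehat B_s+as=:B'_s$, where $B'$ is a drift-$a$ Brownian motion with the law of $B$ under $\P^a$; the monotonicity of $\overline{C}^*_t$ then gives $\E^{\P^{\xi^2}}[\overline{C}^*_t]\le\E^{\P^a}[\overline{C}^*_t]$ for each $t$, whence $\psi^{\xi^2}(\eta)\le\psi^a(\eta)$. Both inequality chains become equalities at $\xi^1\equiv b$ and $\xi^2\equiv a$, which establishes the asserted infimum and supremum.
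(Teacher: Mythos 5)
Your proposal is correct, and its utility half is exactly the paper's own argument: the paper evaluates $\phi^{\xi^1}$ by Girsanov, writing $\phi^{\xi^1}(\eta)=\int_0^\infty \tfrac{1}{\alpha}e^{-(\delta+\alpha\beta)t}\,\E\big[\eta^\alpha\vee\sup_{0\leq v\leq t}K^{\frac{\alpha}{\alpha-1}}\exp\big(\alpha\theta B_v-\alpha(\lambda-\beta)v+\alpha\theta\int_0^v\xi^1_s\,\d s\big)\big]\d t$, and then invokes precisely your monotonicity-in-the-drift observation ($\theta>0$, $\xi^1\geq b$); your coupling under $\P^{\xi^1}$ is a pathwise rephrasing of this, and the finiteness argument via $x^\alpha_+(b)>1$ and \eqref{expected utility} is identical. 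Where you genuinely deviate is the cost half. The paper keeps the density inside and performs an It\^{o} integration by parts under $\P_0$ (Equation \eqref{e77}), which produces a boundary term $\tfrac{1}{\beta}e^{-rT}\varepsilon^{\xi^2}_TY^K_T$ \emph{and} a stochastic integral; it must then prove the transversality condition $\lim_{T\to\infty}e^{-rT}\varepsilon^{\xi^2}_TY^K_T=0$ (following Lemma 4.9 in \cite{BR}) and discard the martingale term, after which the comparison is run on $\widetilde{\psi}^{\xi^2}(\eta)=\E\big[\int_0^\infty e^{-rt}\varepsilon^{\xi^2}_tY^K_t\,\d t\big]$ by the same Girsanov/monotonicity step, supplemented by closed-form expressions for $\widetilde{\psi}^{a},\widetilde{\psi}^{a'}$ (which the paper reuses later, e.g.\ in Lemma \ref{verification}, to compute the financing portfolio). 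You instead change measure first and integrate by parts against the deterministic weight $e^{-(r+\beta)t}$, which eliminates the martingale term altogether; moreover, the step you flag as the ``main obstacle'' is not actually one: for the nonnegative nondecreasing process $\overline{C}^*$, Tonelli gives $\int_0^\infty e^{-(r+\beta)t}\,\d\overline{C}^*_t=(r+\beta)\int_0^\infty e^{-(r+\beta)t}\,\overline{C}^*_t\,\d t$ unconditionally (both sides possibly infinite), so the slice-by-slice comparison goes through in $[0,\infty]$ and the assumption on $\delta$ enters only at the end, through $x_+(a)>1$ and \eqref{expected cost}, to make the extremal value $\psi^a(\eta)$ finite. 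The one step you should make explicit is the identification $\E\big[\int_0^\infty e^{-rt}\varepsilon^{\xi^2}_t\,\d C^K_t\big]=\E^{\P^{\xi^2}}\big[\int_0^\infty e^{-rt}\,\d C^K_t\big]$, which follows from the optional projection theorem (Theorem 1.33 in \cite{J79}, cited by the paper in the proof of Theorem \ref{oct2}) on finite horizons together with monotone convergence. In summary: your route is technically lighter (no stochastic integral, no transversality lemma), at the price of not producing the explicit formulas for $\widetilde{\psi}^{\xi^2}$ that the paper exploits afterwards.
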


\begin{proof}
Notice that $Y^K_t=e^{-\beta t}\{\eta\vee\sup_{0\leq v\leq t}(L^K_ve^{\beta v})\}=e^{-\beta t}\{\eta\vee\sup_{0\leq v\leq t}K^{\frac{1}{\alpha-1}}\exp(\theta B_v-(\lambda-\beta)v)\}$. Applying the Girsanov and Tonelli Theorems, it is easy to check that
\begin{align*}
	\phi^{\xi^1}(\eta)=&\int_0^\infty \frac{1}{\alpha}e^{-(\delta+\alpha\beta)t}\E\bigg[\varepsilon^{\xi^1}_t \Big\{\eta^\alpha\vee\sup_{0\leq v\leq t}K^{\frac{\alpha}{\alpha-1}}\exp(\alpha\theta B_v-\alpha(\lambda-\beta)v)\Big\}\bigg]\d t\\
	=&\int_0^\infty \frac{1}{\alpha}e^{-(\delta+\alpha\beta)t}\E\bigg[ \Big\{\eta^\alpha\vee\sup_{0\leq v\leq t}K^{\frac{\alpha}{\alpha-1}}\exp(\alpha\theta B_v-\alpha(\lambda-\beta)v+\alpha\theta\int_0^v \xi^1_s\d s)\Big\}\bigg]\d t.
\end{align*}
Clearly, for any adapted $\xi^1$ taking values between $b'$ and $b$, we have $\phi^b(\eta)\leq \phi^{\xi^1}(\eta)\leq \phi^{b'}(\eta)$. The assumption $\delta>\alpha r+\frac{\alpha(a-b)^2}{2(1-\alpha)}$ implies that $h^{\alpha,b}(1)<0$. Hence, $x^\alpha_+(b)>1$. Recalling Equation \eqref{expected utility}, $\phi^{b}(\eta)$ is finite.

Then, we calculate $\psi^{\xi^2}(\eta)$. Noting that $\d C_t^K=Y_t^K\d t+\frac{1}{\beta}\d Y_t^K=\frac{1}{\beta}e^{-\beta t}\d (e^{\beta t}Y_t^K)$, a simple calculation yields that
\begin{equation}\begin{split}\label{e77}
	&\int_0^T e^{-rt}\varepsilon^{\xi^2}_t \d C^K_t=\int_0^T \frac{1}{\beta}e^{-(r+\beta)t}\varepsilon^{\xi^2}_t \d (e^{\beta t}Y^K_t)\\
	=&\frac{1}{\beta}e^{-rT}\varepsilon^{\xi^2}_T Y^K_T-\frac{\eta}{\beta}-\frac{1}{\beta}\int_0^T e^{\beta t}Y_t^K\d (e^{-(r+\beta)t}\varepsilon^{\xi^2}_t)\\
	=&\frac{1}{\beta}e^{-rT}\varepsilon^{\xi^2}_T Y^K_T-\frac{\eta}{\beta}+(1+\frac{r}{\beta})\int_0^Te^{-rt}\varepsilon^{\xi^2}_t Y^K_t\d t-\frac{1}{\beta}\int_0^Te^{-rt}\xi^2_t\varepsilon^{\xi^2}_t Y^K_t \d B_t.
\end{split}\end{equation}
Set now $\widetilde{\psi}^{\xi^2}(\eta):=\E\big[\int_0^\infty e^{-rt}\varepsilon^{\xi^2}_t Y^K_t\d t\big]$. Proceeding similarly to the evaluation of $\phi^{\xi^1}(\eta)$, for any adapted process $\xi^2$ taking values in $[a,a']$ we have
\begin{displaymath}
\widetilde{\psi}^{\xi^2}(\eta)=\int_0^\infty e^{-(r+\beta)t}\E\bigg[\eta\vee\sup_{0\leq v\leq t}K^{\frac{1}{\alpha-1}}\exp(\theta B_v-(\lambda-\beta)v+\theta\int_0^v \xi^2_s\d s)\bigg]\d t,
\end{displaymath}
and therefore $\widetilde{\psi}^{a'}(\eta)\leq \widetilde{\psi}^{\xi_2}(\eta)\leq \widetilde{\psi}^{a}(\eta)$.  Calculations analogous to those performed in the proof of Proposition 5.8 in \cite{FLR} imply that, for $\xi^2\in \{a,a'\}$,
 \begin{displaymath}
 \widetilde{\psi}^{\xi^2}(\eta)=\frac{1}{\beta+r}\begin{cases}
 \frac{1}{x_+(\xi^2)-1}K^{\frac{x_+(\xi^2)}{\alpha-1}}\eta^{1-x_+(\xi^2)}+\eta, &\eta> K^{\frac{1}{\alpha-1}};\\
 \frac{x_+(\xi^2)}{x_+(\xi^2)-1}K^{\frac{1}{\alpha-1}}, &\eta\leq K^{\frac{1}{\alpha-1}}.
 \end{cases}
 \end{displaymath}
The assumption $\delta>\alpha r+\frac{\alpha(a-b)^2}{2(1-\alpha)}$ yields that $h^a(1)<0$ and $h^{a'}(1)<0$. Therefore, we have $x_+(a)>1$ and $x_+(a')>1$. Hence, the quantity $\widetilde{\psi}^{\xi^2}(\eta)$ is positive and finite for any adapted $\xi^2\in[a,a']$. Since by following the steps of the proof of Lemma 4.9 in \cite{BR} we can show that
\begin{displaymath}
	\lim_{T\rightarrow \infty}e^{-rT}\varepsilon^{\xi^2}_T Y^K_T=0,
\end{displaymath}
taking expectation on both sides of \eqref{e77}, and letting $T$ to infinity, we obtain
\begin{align*}
	\psi^{\xi^2}(\eta)=(1+\frac{r}{\beta})\widetilde{\psi}^{\xi^2}(\eta)-\frac{\eta}{\beta}.
\end{align*}
It is now straightforward to see that $\psi^{a'}(\eta)\leq \psi^{\xi^2}(\eta)\leq \psi^a(\eta)$, which then completes the proof.
\end{proof}

The results collected so far finally allow us to prove Theorem \ref{t74}.
\vspace{0.15cm}

\begin{proof}[Proof of Theorem \ref{t74}]
By Lemma \ref{l71} and \ref{l72}, we know that $L^K$ is the solution to \eqref{e73} with $\P_1=\P^b$, $\P_2=\P^a$ and  $\P^b\in \mathcal{P}^1(C^K)$, $\P^a\in \mathcal{P}^2(C^K)$. By Theorem \ref{t73}, it remains to find an appropriate constant $K$, such that $w=\psi^{a}(\eta)$,  where $\psi^a(\eta)$ is given by \eqref{expected cost}. By simple calculations it can be shown that the needed $K$ is given by
\begin{displaymath}
K=\begin{cases}
\left(\frac{x_+(a)-1}{{x_+(a)}}(\beta w+\eta)\right)^{\alpha-1}, & w\geq \frac{\eta}{\beta(x_+(a)-1)};\\
(\beta(x_+(a)-1)\eta^{x_+(a)-1}w)^{\frac{\alpha-1}{x_+(a)}}, &\textrm{otherwise}.
\end{cases}
\end{displaymath}
\end{proof}


\subsection{On the portfolio financing the optimal consumption plan}
\label{sec:portfolio}

In this section, we will find the portfolio process $\pi$ needed in order to finance the optimal consumption plan derived in the previous section. For this purpose, let $V^a_t$, $V_t$ be the present values of the future consumption $C^K$ taken under the probability $\P^a$ and the set of multiple priors $\mathcal{P}^2$, respectively; i.e.,
\begin{align*}
&V^a_t:=\E^a_t\left[\int_t^\infty e^{-r(s-t)}\d C^K_s\right]=\E_t\left[\int_t^\infty \frac{\varepsilon^a_s}{\varepsilon_t^a}e^{-r(s-t)}\d C^K_s\right],\\
&V_t:=\esssup_{\P^\xi\in\mathcal{P}^2}\E^\xi_t\left[\int_t^\infty e^{-r(s-t)}\d C^K_s\right]=\esssup_{\P^\xi\in\mathcal{P}^2}\E_t\left[\int_t^\infty \frac{\varepsilon^\xi_s}{\varepsilon_t^\xi}e^{-r(s-t)}\d C^K_s\right].
\end{align*}

\begin{lemma}
\label{verification}
Recall $\psi^a$ as in \eqref{expected cost}. One has
\begin{displaymath}
V_t=V^a_t=e^{\theta B_t-\lambda t}\psi^a(e^{-\theta B_t+\lambda t}Y_t^K).
\end{displaymath}
\end{lemma}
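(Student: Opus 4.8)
The plan is to exploit the self-similarity of the stationary setting: on $[t,\infty)$ the optimal level of satisfaction and the tracking consumption are exact multiplicative copies of the corresponding time-$0$ objects, the scaling factor being the $\mathcal{F}_t$-measurable random variable $\Phi_t:=e^{\theta B_t-\lambda t}$ (so that $L^K_t=K^{1/(\alpha-1)}\Phi_t$). Writing $Y^K_s=e^{-\beta s}\big(\eta\vee\sup_{0\le v\le s}(L^K_v e^{\beta v})\big)$ and splitting the running supremum at $v=t$, the part over $[0,t]$ together with $\eta$ equals exactly $e^{\beta t}Y^K_t$, whence for $s=t+\tau\ge t$
\begin{displaymath}
Y^K_{t+\tau}=e^{-\beta(t+\tau)}\Big[e^{\beta t}Y^K_t\vee\sup_{0\le w\le\tau}\big(L^K_{t+w}e^{\beta(t+w)}\big)\Big].
\end{displaymath}
Substituting $L^K_{t+w}e^{\beta(t+w)}=\Phi_t e^{\beta t}K^{1/(\alpha-1)}\exp(\theta\widetilde B_w-(\lambda-\beta)w)$ with $\widetilde B_w:=B_{t+w}-B_t$, and factoring out $\Phi_t e^{\beta t}$, one reads off $Y^K_{t+\tau}=\Phi_t\,\widetilde Y_\tau$, where $\widetilde Y$ is the level of satisfaction of the \emph{time-$0$} problem driven by $\widetilde B$ and started at $\widetilde\eta:=\Phi_t^{-1}Y^K_t$. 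Since $Y^K_t\ge L^K_t$ forces $\widetilde\eta\ge K^{1/(\alpha-1)}$ there is no initial gulp, and from $\d C^K_s=Y^K_s\,\d s+\beta^{-1}\d Y^K_s$ we get $\d C^K_{t+\tau}=\Phi_t\,\d\widetilde C_\tau$, with $\widetilde C$ the plan tracking the shifted threshold.

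Because $\Phi_t$ is $\mathcal{F}_t$-measurable it factors out of the conditional expectation, giving
\begin{displaymath}
V^a_t=\E^a_t\Big[\int_0^\infty e^{-r\tau}\,\d C^K_{t+\tau}\Big]=\Phi_t\,\E^a_t\Big[\int_0^\infty e^{-r\tau}\,\d\widetilde C_\tau\Big].
\end{displaymath}
To identify the remaining factor with $\psi^a(\widetilde\eta)$ I would use that, under $\P^a$, conditionally on $\mathcal{F}_t$ the increment process $\widetilde B$ has the same law as $B$ under $\P^a$ (indeed $\widetilde B_\tau-a\tau=B^a_{t+\tau}-B^a_t$ is a standard $\P^a$-Brownian motion independent of $\mathcal{F}_t$). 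Hence the conditional expectation coincides with the time-$0$ expected cost $\psi^a(\widetilde\eta)$, yielding $V^a_t=\Phi_t\,\psi^a(\Phi_t^{-1}Y^K_t)=e^{\theta B_t-\lambda t}\psi^a(e^{-\theta B_t+\lambda t}Y^K_t)$, the asserted formula.

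Finally, to prove $V_t=V^a_t$, note that $\P^a\in\mathcal{P}^2$ gives $V_t\ge V^a_t$ at once. For the reverse inequality I would apply the same scaling to an arbitrary $\P^\xi\in\mathcal{P}^2$, obtaining $V^\xi_t=\Phi_t\,\E_t\big[\int_0^\infty e^{-r\tau}(\varepsilon^\xi_{t+\tau}/\varepsilon^\xi_t)\,\d\widetilde C_\tau\big]$, and then invoke the pathwise monotonicity already exploited in Lemma \ref{l72}: since $\theta>0$ and $\widetilde Y$ is increasing in the driving path, the Girsanov tilt by any adapted $\xi$ with values in $[a',a]$ is dominated by the one with $\xi\equiv a$, so the conditional expectation is $\le\psi^a(\widetilde\eta)$. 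Taking the essential supremum over $\xi$ then gives $V_t\le\Phi_t\psi^a(\widetilde\eta)=V^a_t$.

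The main obstacle I anticipate is the rigorous handling of the infinite horizon in the conditional change of measure: justifying the factorization and the interchange of $\E^a_t$ with the time shift requires a transversality estimate of the type $\lim_{T\to\infty}e^{-rT}\varepsilon^\xi_T Y^K_T=0$ (established in Lemma \ref{l72} via Lemma 4.9 in \cite{BR}), now in conditional form, together with the finiteness of $\psi^a$ to secure integrability. The second delicate point is the conditional version of the supremum argument, namely that optimizing over future drifts after conditioning on $\mathcal{F}_t$ still selects $\xi\equiv a$ and returns $\psi^a$ evaluated at the shifted initial point $\widetilde\eta=\Phi_t^{-1}Y^K_t$; this amounts to checking that the monotonicity in the Girsanov kernel used in Lemma \ref{l72} survives conditioning, which follows from the Markov structure of $\widetilde B$ but should be spelled out.
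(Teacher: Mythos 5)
Your proposal is correct and follows essentially the same route as the paper: the paper's proof likewise splits the running supremum at time $t$, factors out $e^{\theta B_t-\lambda t}$, and uses the Markov property to reduce the conditional expectation to the time-zero cost $\psi^a$ evaluated at the rescaled level $e^{-\theta B_t+\lambda t}Y^K_t$ (the paper merely routes the computation through the integration-by-parts identity \eqref{e77} and $\widetilde{\psi}^a$ rather than shifting $\d C^K$ directly). For the identity $V_t=V^a_t$ the paper simply cites the argument of Theorem 5.11 in \cite{FLR}, which is precisely the conditional Girsanov-plus-monotonicity argument you spell out.
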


\begin{proof}
By Equation \eqref{e77}, we have
\begin{displaymath}
V_t^a=\frac{(\beta+r)e^{rt}}{\beta\varepsilon^a_t}\E_t\left[\int_t^\infty e^{-rs}\varepsilon^a_s Y^K_s\d s\right]-\frac{Y^K_t}{\beta}:=\widetilde{V}^a_t-\frac{Y^K_t}{\beta}.
\end{displaymath}
On the other hand, the Markov property implies that
\begin{align*}
\widetilde{V}_t^a=&\frac{(\beta+r)e^{rt}}{\beta\varepsilon^a_t}\E_t\left[\int_t^\infty e^{-(r+\beta)s}\varepsilon^a_s\left\{\eta\vee \sup_{0\leq v\leq s}K^{\frac{1}{\alpha-1}}\exp(\theta B_v-(\lambda-\beta)v)\right\}\d s\right]\\
=&\frac{(\beta+r)}{\beta}\E_t\left[\int_0^\infty e^{-(r+\beta)s}\frac{\varepsilon^a_{s+t}}{\varepsilon_t^a}\left\{Y^K_t\vee e^{-\beta t}\sup_{t\leq v\leq s+t}K^{\frac{1}{\alpha-1}}\exp(\theta B_v-(\lambda-\beta)v)\right\}\d s\right]\\
=&\frac{(\beta+r)}{\beta}\E_t\left[\int_0^\infty e^{-(r+\beta)s}\frac{\varepsilon^a_{s+t}}{\varepsilon_t^a}\left\{Y^K_t\vee e^{-\lambda t}\sup_{0\leq v\leq s}K^{\frac{1}{\alpha-1}}\exp(\theta B_{v+t}-(\lambda-\beta)v)\right\}\d s\right]\\
=&\frac{(\beta+r)}{\beta}e^{\theta B_t-\lambda t}\E_t\left[\int_0^\infty e^{-(r+\beta)s}\frac{\varepsilon^a_{s+t}}{\varepsilon_t^a}\left\{\eta\vee \sup_{0\leq v\leq s}K^{\frac{1}{\alpha-1}}\exp(\theta (B_{v+t}-B_t)-(\lambda-\beta)v)\right\}\d s\right]\bigg|_{\eta=\widetilde{Y}^K_t}\\
=&\frac{(\beta+r)}{\beta}e^{\theta B_t-\lambda t}\E\left[\int_0^\infty e^{-(r+\beta)s}\varepsilon_s^a\left\{\eta\vee \sup_{0\leq v\leq s}K^{\frac{1}{\alpha-1}}\exp(\theta B_v-(\lambda-\beta)v)\right\}\d s\right]\bigg|_{\eta=\widetilde{Y}^K_t}\\
=&e^{\theta B_t-\lambda t}\left(\psi^a(e^{-\theta B_t+\lambda t}Y^K_t)+\frac{1}{\beta}e^{-\theta B_t+\lambda t}Y^K_t\right),
\end{align*}
where $\widetilde{Y}^K_t=e^{-\theta B_t+\lambda t}Y^K_t$. All the above analysis yields that $V^a_t=e^{\theta B_t-\lambda t}\psi^a(e^{-\theta B_t+\lambda t}Y_t^K)$. Then, following the arguments developed in the proof of Theorem 5.11 in \cite{FLR} we conclude that $V_t=V_t^a$.
\end{proof}

\begin{proposition}\label{portfolio}
Under the same assumptions of Theorem \ref{t74}, we have
\begin{equation}
\label{eq:optimalpi}
\pi_t=\frac{\theta x_+(a)}{\sigma} =:\pi \quad \text{for all}\,\, t \geq 0.
\end{equation}
\end{proposition}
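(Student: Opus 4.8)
The plan is to identify the present-value process $V$ computed in Lemma \ref{verification} with the wealth process of \eqref{wealth process} and then read off $\pi$ by matching the diffusion coefficients. Since $V_t=V^a_t$ is the (worst-case) present value at time $t$ of the future consumption stream $C^K$, it is precisely the wealth that a self-financing strategy must carry in order to finance $C^K$; its Brownian part must therefore equal $\sigma\pi_t V_t\,\d B_t$, which is the only martingale term in \eqref{wealth process} (both the drift and $-\d C_t$ being of finite variation). Everything thus reduces to computing the $\d B_t$-coefficient of $V$ via It\^o's formula.

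First I would rewrite Lemma \ref{verification} in a form amenable to It\^o's rule. Setting $A_t:=\eta\vee\sup_{0\le s\le t}(L^K_s e^{\beta s})$, so that $Y^K_t=e^{-\beta t}A_t$ and $\widetilde Y_t:=e^{-\theta B_t+\lambda t}Y^K_t=A_t e^{-\theta B_t+(\lambda-\beta)t}$, Lemma \ref{verification} reads $V_t=G(B_t,t,A_t)$ with
\[
G(x,t,A):=e^{\theta x-\lambda t}\,\psi^a\!\left(A\,e^{-\theta x+(\lambda-\beta)t}\right).
\]
The process $A$ is continuous, non-decreasing and of finite variation (a running maximum), so in the It\^o expansion of $V_t=G(B_t,t,A_t)$ it contributes only to the finite-variation part, and the martingale part is $\partial_x G(B_t,t,A_t)\,\d B_t$. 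Differentiating through $y=A e^{-\theta x+(\lambda-\beta)t}$, for which $\partial_x y=-\theta y$, gives
\[
\partial_x G=\theta\,e^{\theta x-\lambda t}\big[\psi^a(y)-y\,(\psi^a)'(y)\big].
\]

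The key observation is that $\widetilde Y_t\ge K^{1/(\alpha-1)}$ for all $t$, with equality exactly at the reflecting (tracking) times when $A$ increases: indeed $A_t\ge K^{1/(\alpha-1)}e^{\theta B_t-(\lambda-\beta)t}$ by definition of the running maximum. Hence the argument $y=\widetilde Y_t$ always lies in the upper branch of \eqref{expected cost}, where $\psi^a(y)$ is a pure power, $\psi^a(y)=\tfrac{1}{\beta(x_+(a)-1)}K^{x_+(a)/(\alpha-1)}\,y^{1-x_+(a)}$. This power obeys the Euler identity $y(\psi^a)'(y)=(1-x_+(a))\psi^a(y)$, so that $\psi^a(y)-y(\psi^a)'(y)=x_+(a)\,\psi^a(y)$ and therefore
\[
\partial_x G(B_t,t,A_t)=\theta\,x_+(a)\,e^{\theta B_t-\lambda t}\psi^a(\widetilde Y_t)=\theta\,x_+(a)\,V_t.
\]
Comparing this $\d B_t$-coefficient with the Brownian term $\sigma\pi_t V_t$ of \eqref{wealth process} and using $V_t>0$ yields $\sigma\pi_t=\theta\,x_+(a)$, i.e.\ the claimed constant $\pi_t=\theta x_+(a)/\sigma$.

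The main obstacle is the careful treatment of the singular component of $Y^K$. I must argue that the running-maximum $A$ is continuous of finite variation, so that it produces no Brownian term in It\^o's formula, while simultaneously its increments correspond exactly to the consumption $\d C^K_t$ (the reflection at the boundary $\widetilde Y=K^{1/(\alpha-1)}$), so that the finite-variation part of $\d V_t$ matches the drift-plus-$(-\d C_t)$ structure of \eqref{wealth process}. One also verifies in passing that $\psi^a\in C^1$ at the junction $y=K^{1/(\alpha-1)}$ (the smooth fit), which legitimizes the It\^o expansion at the reflecting times. For the determination of $\pi$ only the diffusion part is needed, but this consistency of the finite-variation terms, together with $V$ being the genuine replicating wealth of $C^K$, is what makes the matching of Brownian coefficients legitimate.
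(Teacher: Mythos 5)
Your proposal is correct and follows essentially the same route as the paper's proof: invoke Lemma \ref{verification}, note that $e^{-\theta B_t+\lambda t}Y^K_t\geq K^{1/(\alpha-1)}$ so that $\psi^a$ is evaluated on its power branch, apply It\^{o}'s formula to identify the Brownian coefficient $\theta x_+(a)V_t$, and match it against the $\sigma\pi_t V_t\,\d B_t$ term of \eqref{wealth process}. The paper compresses exactly this computation into one line ("applying It\^{o}'s formula, we find $\d V_t=\theta x_+(a)V_t\,\d B_t+\textrm{terms of bounded variation}$"), so your write-up simply supplies the details — the finite-variation role of the running maximum and the Euler identity for the power function — that the authors leave implicit.
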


\begin{proof}
It is easy to check that for any $t\geq 0$, $e^{-\theta B_t+\lambda t}Y^K_t\geq K^{\frac{1}{\alpha-1}}$. By Lemma \ref{verification} and \eqref{expected cost}, applying It\^{o}'s formula, we find
\begin{displaymath}
dV_t=\theta x_+(a) V_t \d B_t+ \textrm{terms of bounded variation}.
\end{displaymath}
Compared with \eqref{wealth process}, the latter gives that the portfolio is actually constant and such that $\pi_t=\theta x_+(a)/\sigma$ for all $t\geq0$.
\end{proof}

\begin{remark}
By Theorem \ref{t74} and the analysis in Section 4 of \cite{BR}, the optimal policy $C^K$ for the utility maximization problem \eqref{e72} is also optimal for the linear problem \eqref{linear}. Hence, the portfolio process \eqref{eq:optimalpi} is also such to finance the consumption plan appearing in \eqref{eq:Aaw} and thus is consistent with that found in Theorem 4.14 of \cite{BR}.
\end{remark}

Recall that $x_+(a)$ is the largest solution to the equation $h^a(x)=0$, where $h^a$ is defined in \eqref{e76}. We now study the dependence of the portfolio \eqref{eq:optimalpi} with respect to $b-a$ (measuring somehow the discrepancy between the beliefs of the agents related to the utility and cost from consumption), the volatility $\sigma$, and the parameter of relative risk-aversion $1-\alpha$.

\begin{proposition}
\label{comparative statics}
Recall $\pi$ from \eqref{eq:optimalpi}. On has that:
\begin{enumerate}
\item $\pi$ is decreasing with respect to $\sigma$;
\item $\pi$ is decreasing with respect to $1-\alpha$;
\item Let $\hat{\delta}:=\beta + \frac{\delta - r}{\alpha-1}$. Then:
\begin{itemize}
\item[(i)] If $\hat{\delta}\geq 0$, then $\pi$ is increasing with respect to $b-a$.

\item[(ii)] If $\hat{\delta}<0$ and $\delta-2\alpha\delta+\alpha^2 r+\beta\alpha(1-\alpha)\leq 0$, then then $\pi$ is decreasing with respect to $b-a$.

\item[(iii)] If $\hat{\delta}<0$ and $\delta-2\alpha\delta+\alpha^2 r+\beta\alpha(1-\alpha)> 0$, then $\pi$ is decreasing with respect to $b-a$ when $(b-a)^2\in(0, - 2(1-\alpha)\hat{\delta})$ and increasing when $(b-a)^2\in(-2(1-\alpha)\hat{\delta}, (1-\alpha)\frac{2(\delta-\alpha r)}{\alpha})$.
\end{itemize}
\end{enumerate}
\end{proposition}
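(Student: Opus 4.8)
The plan is to collapse the entire proposition to a one-dimensional monotonicity analysis of a single scalar quantity. Recall from \eqref{eq:optimalpi} that $\pi=\theta x_+(a)/\sigma$, with $\theta=(b-a)/(1-\alpha)>0$ and $x_+(a)$ the largest root of $h^a$ in \eqref{e76}. Setting $y:=\theta x_+(a)=\sigma\pi$ and substituting $x=y/\theta$ into $h^a(x)=0$, the factor $\theta^2$ cancels and one is left with $\tfrac12 y^2-p\,y-(r+\beta)=0$, where $p:=(\lambda-\beta-\theta a)/\theta$. Since $\theta>0$, the largest root of $h^a$ corresponds to the largest $y$, so $y=p+\sqrt{p^2+2(r+\beta)}$ and hence $\pi=\tfrac1\sigma\bigl(p+\sqrt{p^2+2(r+\beta)}\bigr)$. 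Because $r+\beta>0$, the map $p\mapsto p+\sqrt{p^2+2(r+\beta)}$ has derivative $1+p/\sqrt{p^2+2(r+\beta)}>0$, so $\pi$ is strictly increasing in $p$ at fixed $\sigma$; every comparative static thus reduces to the sign of the corresponding partial derivative of $p$.

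The next step is to make the parameter dependence of $p$ explicit. Writing $q:=b-a>0$ and $m:=1-\alpha\in(0,1)$ and inserting the definitions of $\theta$ and $\lambda$, a short computation gives $p=\frac{q}{2}+\frac{\delta-r-\beta m}{q}$; using $\hat\delta=\beta+\frac{\delta-r}{\alpha-1}$, i.e.\ $\delta-r-\beta m=-m\hat\delta$, this also reads $p=\frac{q}{2}-\frac{m\hat\delta}{q}$. Two of the three claims are now immediate. For (1), neither $p$ nor $r+\beta$ involves $\sigma$, so $\pi=y/\sigma$ with $y>0$ constant in $\sigma$ is strictly decreasing in $\sigma$. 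For (2), differentiating the first form at fixed $q$ gives $\partial p/\partial m=-\beta/q<0$, hence $\pi$ is decreasing in $m=1-\alpha$.

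For (3) I would analyze $\partial p/\partial q=\tfrac12+\frac{m\hat\delta}{q^2}$ as a function of $q^2$ over the admissible range. If $\hat\delta\geq0$ this is strictly positive, giving monotone increase and case (i). If $\hat\delta<0$ it vanishes exactly at $q^2=q_*^2:=-2m\hat\delta=-2(1-\alpha)\hat\delta$, being negative below and positive above. The crucial point is that $q$ is not free: the well-posedness condition $\delta>\alpha r+\frac{\alpha q^2}{2m}$ from Theorem \ref{t74} confines $q^2$ to $(0,\bar q^2)$ with $\bar q^2:=(1-\alpha)\frac{2(\delta-\alpha r)}{\alpha}$, which is precisely the upper endpoint in (iii). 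Thus the split between (ii) and (iii) is whether the crossover $q_*^2$ lies outside or inside this admissible interval: clearing denominators, $q_*^2\geq\bar q^2$ is equivalent to $\delta-2\alpha\delta+\alpha^2 r+\beta\alpha(1-\alpha)\leq0$. Under that condition $\partial p/\partial q<0$ throughout $(0,\bar q^2)$, giving (ii); otherwise $q_*^2\in(0,\bar q^2)$ and $p$ — hence $\pi$ — decreases on $(0,q_*^2)$ and increases on $(q_*^2,\bar q^2)$, giving (iii).

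The only delicate routine step is the last algebraic equivalence $q_*^2\geq\bar q^2\Leftrightarrow\delta-2\alpha\delta+\alpha^2 r+\beta\alpha(1-\alpha)\leq0$; everything else is direct substitution and the elementary sign count for a quadratic with $h^a(0)=-(r+\beta)<0$ (which guarantees $y>0$ and justifies the root formula). I expect the main conceptual obstacle to be recognizing that the admissible range for $b-a$ must be carried along throughout: the sign of $\partial p/\partial q$ in isolation would only predict a single crossover, and it is the interaction of the well-posedness bound $\bar q^2$ with the crossover $q_*^2$ that produces the precise three-way dichotomy in statement~(3).
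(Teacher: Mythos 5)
Your proposal is correct and takes essentially the same route as the paper's proof: both rewrite $\sigma\pi=\theta x_+(a)$ as $X+\sqrt{X^2+2(r+\beta)}$ for a scalar $X$ (your $p$ coincides with the paper's $I(\alpha)$ and $G(\theta)$ under the change of variable $\theta=(b-a)/(1-\alpha)$), observe that this expression is increasing in $X$, and then reduce each comparative static to the sign of the derivative of $X$ over the admissible range $(b-a)^2<(1-\alpha)\frac{2(\delta-\alpha r)}{\alpha}$ coming from well-posedness. The only differences are cosmetic: you use one unified parametrization in $q=b-a$ for parts (2) and (3) where the paper uses two separate ones, and you write out the algebraic equivalence $q_*^2\geq\bar q^2\Leftrightarrow\delta-2\alpha\delta+\alpha^2 r+\beta\alpha(1-\alpha)\leq 0$ that the paper dispatches as a ``simple calculation.''
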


\begin{proof}
Since it readily follows that $\pi$ is decreasing with respect to $\sigma$, we move directly on by proving the second claim of the proposition.

Setting
 \begin{displaymath}
 \widetilde{\delta}:=(a-b)^2+2(\delta-r), \qquad  I(\alpha):=\frac{\beta}{(a-b)}(1-\alpha)+\frac{\widetilde{\delta}}{2(b-a)},
 \end{displaymath}
 it is easy to check that
 \begin{displaymath}
 \theta x_+(a)=I(\alpha)+\sqrt{I^2(\alpha)+2(r+\beta)}=:H(\alpha),
 \end{displaymath}
 and
 \begin{displaymath}
 H'(\alpha)=\left(1+\frac{I(\alpha)}{\sqrt{I^2(\alpha)+2(r+\beta)}}\right)I'(\alpha)\geq 0.
 \end{displaymath}
From the latter, it follows that $\pi$ is increasing in $\alpha$, and this proves the second claim.

As for the third claim, recall that $\theta=(b-a)/(1-\alpha)$ and notice that
\begin{displaymath}
\theta x_+(a)=\frac{1-\alpha}{2}\theta-\frac{\hat{\delta}}{\theta}+
\sqrt{(\frac{1-\alpha}{2}\theta-\frac{\hat{\delta}}{\theta})^2+2(r+\beta)}:=F(\theta).
\end{displaymath}
Defining $G(\theta):=\frac{1-\alpha}{2}\theta-\frac{\hat{\delta}}{\theta}$, simple calculations imply that
\begin{displaymath}
F'(\theta)=\frac{\sqrt{G^2(\theta)+2(r+\beta)}+G(\theta)}{\sqrt{G^2(\theta)+2(r+\beta)}}G'(\theta),
\end{displaymath}
and $G'(\theta)=(1-\alpha)/2+\hat{\delta}\theta^{-2}$. Since $F'\geq0$ if and only if $G'\geq0$, it follows that the monotonicity of $\pi$ with respect to $b-a$ is the same as the monotonicity of $G$ with respect to $\theta$. Now, the requirement $\delta>\alpha r+\frac{\alpha(a-b)^2}{2(1-\alpha)}$ implies that $0<\theta^2<\frac{2(\delta-\alpha r)}{\alpha(1-\alpha)}$. We then have the following three cases, completing the proof.

\textbf{Case (i).} Suppose that $\hat{\delta}\geq 0$. Then, we have $G'(\theta)\geq 0$ for any $\theta^2\in(0,\frac{2(\delta-\alpha r)}{\alpha(1-\alpha)})$.

\textbf{Case (ii).} Suppose that $\hat{\delta}<0$ and $\delta-2\alpha\delta+\alpha^2 r+\beta\alpha(1-\alpha)\leq 0$. It is easy to check that for any $\theta^2\in(0,\frac{2(\delta-\alpha r)}{\alpha(1-\alpha)})$, $G'(\theta)\leq 0$.

\textbf{Case (iii).} Suppose that $\hat{\delta}<0$ and $\delta-2\alpha\delta+\alpha^2 r+\beta\alpha(1-\alpha)> 0$. By simple calculation, we have $G'(\theta)< 0$ when $\theta^2\in(0,\frac{2\hat{\delta}}{\alpha-1})$ and $G'(\theta)>0$ when $\theta^2\in(\frac{2\hat{\delta}}{\alpha-1},\frac{2(\delta-\alpha r)}{\alpha(1-\alpha)})$.
\end{proof}

Let us comment on the comparative statics of the optimal portfolio. First, we obtain the intuitive result that an increase in volatility $\sigma$  reduces investment in the risky asset. More risk averse agents also invest less in the risky assets than their less risk-averse counterparts.
These results are well in line with the usual comparative statics in asset pricing models.

Knightian uncertainty and market friction are well described by the difference of the parameters
  $b-a$ in our model because these parameters describe the respective worst case models for the agent's minimal expected utility and maximal expected cost. Interestingly, it is not always the case that an increase of this   term, i.e. a better expected return for investing in the risky asset,  leads to more exposure in the risky asset.
  We have to distinguish two cases here. When $\hat{\delta}=\beta + \frac{\delta - r}{\alpha-1} \ge 0$, the  optimal level of satisfaction diverges to infinity (compare also the discussion in \cite{BR00}   in the frictionless case).  In this case, the optimal portfolio is indeed decreasing in market friction. In the opposite case, we have $(1-\alpha) \beta < \delta - r $.  When interest rates $r$ are quite low, impatience $\delta$ is high, and the depreciation rate $\beta$ is low, i.e. the agent enjoys past consumption for a long time, the agent optimally consumes a lot in early years and thus does not invest for higher  future consumption as he enjoys his past consumption for a long time and as he is relatively impatient. He is thus not interested in the long-run prospective of higher returns and thus even decreases his investment in the risky asset if the returns increase.


\subsection{Abstention from the asset marekt}
\label{sec:priors}

We have already seen in Remark \ref{rem:deltacond} that the requirement on $\delta$ imposed in the statement of Theorem \ref{t74} is necessary in order to have a well-posed optimization problem. We now want to better understand the role of the other assumption made within this section; namely, our requirement $-\kappa\leq a'< a < b <b'\leq \kappa$ imposed on the parameters defining the sets of priors $\mathcal{P}_1$ and $\mathcal{P}_2$ (cf.\ \eqref{eq:P1} and \eqref{eq:P2}).

\begin{proposition}
Suppose that there exists some probability $\P$ such that $\P\in \mathcal{P}^1\cap \mathcal{P}^2$ and let $\delta > \alpha r$\footnote{This condition arise from ours $\delta > \alpha r + \frac{\alpha(a-b)^2}{2(1-\alpha)}$ needed to avoid an ill-posed problem. Indeed, in the case $\mathcal{P}^1\cap \mathcal{P}^2 \neq \emptyset$, we can simply take $a=b$, thus leading to $\delta > \alpha r$.}. Then, the optimal consumption plan for the utility maximization problem \eqref{e72} is deterministic.
\end{proposition}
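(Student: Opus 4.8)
The plan is to reduce the problem to the single-prior situation and then read off the deterministic structure directly from Lemma \ref{l71}. First I would argue that a common element $\P\in\mathcal{P}^1\cap\mathcal{P}^2$ forces the two kernel-ranges to meet. Since the Girsanov kernel of a measure with respect to $\P_0$ is, up to a $\d\P_0\otimes\d t$-null set, uniquely determined, membership $\P\in\mathcal{P}^1$ requires this kernel to take values in $[b,b']$ while $\P\in\mathcal{P}^2$ requires it to take values in $[a',a]$. Hence the two ranges must intersect; given $a'<a<b<b'$ this is possible only at the boundary instance $a=b$, where they share the single point $\{a\}=\{b\}$. Thus the common prior is the measure $\P=\P^a=\P^b$ with constant kernel $\xi\equiv a=b$, and in this degenerate case the standing requirement $\delta>\alpha r+\frac{\alpha(a-b)^2}{2(1-\alpha)}$ collapses exactly to the hypothesis $\delta>\alpha r$, so that the entire machinery of Subsection \ref{proofofthm} remains available.

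Next I would invoke Lemma \ref{l71} with $\xi^1=\xi^2=a=b$. Because $\varepsilon^a_t=\varepsilon^b_t$ for every $t$, the solution of the backward equation \eqref{e73} simplifies to
\[
L^K_t=\Big(Ke^{(\delta-r)t}\frac{\varepsilon^a_t}{\varepsilon^b_t}\Big)^{\frac{1}{\alpha-1}}=K^{\frac{1}{\alpha-1}}\exp\Big(\tfrac{\delta-r}{\alpha-1}\,t\Big),
\]
a purely deterministic curve. This is the crux of the argument: the randomness in the minimal level entered $L^K$ only through the ratio $\varepsilon^{\xi^2}/\varepsilon^{\xi^1}$ of change-of-measure densities, and a common prior makes this ratio identically equal to one, so the stochastic exponentials cancel.

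I would then confirm optimality through Theorem \ref{t73}. By Lemma \ref{l72} applied with $a=b$, the constant-kernel measure $\P$ simultaneously attains the infimum defining $\phi^b$ and the supremum defining $\psi^a$; equivalently $\P\in\mathcal{P}^1(C^K)\cap\mathcal{P}^2(C^K)$. Hence $C^K$ is optimal for \eqref{e72} with $w=\psi^a(\eta)$. Finally, the explicit tracking formulas of Theorem \ref{t74},
\[
\overline{C}^*_t=\sup_{0\leq s\leq t}\Big(\tfrac{L^K_s-\eta e^{-\beta s}}{e^{-\beta s}}\Big)\vee 0,\qquad C^*_t=\int_0^t e^{-\beta s}\,\d\overline{C}^*_s,
\]
involve only the deterministic objects $L^K$ and $\eta e^{-\beta s}$; consequently $\overline{C}^*$, and therefore the optimal plan $C^*=C^K$, are deterministic, which is the assertion.

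The only genuinely delicate point is the first step, namely justifying that the intersection $\mathcal{P}^1\cap\mathcal{P}^2$ can be nonempty only via $a=b$, together with the attendant check that the proofs of Lemmas \ref{l71} and \ref{l72} do not secretly rely on the strict inequality $a<b$. Both are routine: kernel uniqueness is standard, and inspection of those proofs shows that they use only the stated range constraints on $\xi^1,\xi^2$, which at $a=b$ degenerate to a single admissible constant kernel. Everything downstream is then a direct substitution.
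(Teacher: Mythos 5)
Your overall strategy matches the paper's: feed the common prior into both slots of the backward equation \eqref{e73}, observe that the stochastic exponentials cancel so that $L^K_t=(Ke^{(\delta-r)t})^{\frac{1}{\alpha-1}}$ is deterministic, and then verify optimality through Theorem \ref{t73}. There are, however, two concrete weak points.

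First, your verification that $\P\in\mathcal{P}^1(C^K)\cap\mathcal{P}^2(C^K)$ leans on Lemma \ref{l72} ``applied with $a=b$'', and this is shakier than you suggest. The proof of Lemma \ref{l72}, and the explicit formulas \eqref{expected utility} and \eqref{expected cost} it rests on, are built around $\theta=(b-a)/(1-\alpha)$, which vanishes exactly when $a=b$: the functions $h^{\alpha,\xi}$ and $h^{\xi}$ then cease to be quadratics, $x^\alpha_+(b)$ and $x_+(a)$ are no longer ``largest roots'', and the running-supremum computations behind those formulas degenerate. The paper avoids this entirely with a one-line argument you should substitute: once $L^K$ is deterministic, the tracking plan $C^K$, its utility $\int_0^\infty u(t,Y^K_t)\,\d t$ and its cost $\int_0^\infty e^{-rt}\,\d C^K_t$ are deterministic, hence have the same expectation under \emph{every} equivalent prior, so $\P$ trivially attains both the infimum over $\mathcal{P}^1$ and the supremum over $\mathcal{P}^2$. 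Relatedly, the paper never reduces to a constant kernel: it verifies \eqref{e73} directly with $\xi^1=\xi^2$ equal to the (possibly non-constant, adapted) kernel of the common prior, using only the martingale property of $\varepsilon^{\xi}$ and conditional Fubini. Your kernel-uniqueness reduction is correct under the footnote's reading $a=b$ (where the intersection of the kernel ranges is the singleton $\{a\}$), but it is unnecessary, and it would not cover the case where the two intervals overlap in more than a point, which the paper's argument handles with no extra effort.

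Second, there is a genuine, if small, gap at the end: Theorem \ref{t73} yields optimality of $C^K$ only for the particular initial wealth $w=\psi^a(\eta)$ induced by your chosen $K$, whereas problem \eqref{e72} comes with an arbitrarily prescribed $w>0$. To conclude that the optimal plan for \eqref{e72} is deterministic you must still show that $K$ can be chosen so that the (deterministic) cost of $C^K$ equals the given $w$. The paper devotes the second half of its proof to exactly this, distinguishing the cases $\delta<r+(1-\alpha)\beta$ and $\delta\geq r+(1-\alpha)\beta$ (in the latter, $Y^K_t=e^{-\beta t}(\eta\vee K^{\frac{1}{\alpha-1}})$ and the agent consumes her entire wealth at time zero, $C^*_t\equiv w$) and solving explicitly for $K$ in terms of $w$ and $\eta$. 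Your proof should either reproduce this matching or at least argue that the map from $K$ to the induced cost is onto $(0,\infty)$.
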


\begin{proof}
It is easy to check that $L^K_v=(Ke^{(\delta-r)v})^{\frac{1}{\alpha-1}}$ is the solution to \eqref{e73} with $\xi^1=\xi^2$ such that $\P^{\xi^1}=\P^{\xi^2}=\P$ and Lagrange multiplier $M=\frac{K\beta}{r+\beta}$.  Clearly, the consumption plan that tracks such $L^K$ is a deterministic function and hence, the utility and the cost induced by this consumption plan are deterministic as well. Therefore, we have
	 \begin{align*}
	 &\inf_{\P\in\mathcal{P}^1}\E^\P\bigg[\int_0^\infty  u(t,Y_t^K)\d t\bigg]=\int_0^\infty  u(t,Y_t^K)\d t=\E^\P\bigg[\int_0^\infty  u(t,Y_t^K)\d t\bigg] ,\\
	 &\sup_{P\in\mathcal{P}^2}\E^\P\bigg[\int_0^\infty  e^{-rt} \d C^K_t\bigg]=\int_0^\infty  e^{-rt} \d C^K_t=\E^\P\bigg[\int_0^\infty  e^{-rt} \d C^K_t\bigg],
	 \end{align*}
	 which implies that $\P\in \mathcal{P}^1(C^K)\cap \mathcal{P}^2(C^K)$. Hence, $\P$ is indeed the worst-case scenario for the utility and cost associated to the deterministic plan $C^K$.

In the following, we complete the proof by determining the appropriate constant $K$ such that the induced cost equals to the initial wealth $w$.

\textbf{Case 1.} Suppose that $\delta<r+(1-\alpha)\beta$. In this case, $Y^K_t=(e^{-\beta t}\eta)\vee (K^{\frac{1}{\alpha-1}}e^{\frac{\delta-r}{\alpha-1}t})=(e^{-\beta t}\eta)\vee L^K_t$.
It remains to find $K$ such that the cost $\int_0^\infty e^{-rt} \d C_t^K$ equals to $w$, where
	 \begin{displaymath}
	 \int_0^\infty e^{-rt}\d C_t^K=\begin{cases} K^{\frac{1}{\alpha-1}}
\frac{(1-\alpha)(\beta+r)}{\beta(\delta-\alpha r)}-\frac{\eta}{\beta}, & \eta \leq K^{\frac{1}{\alpha-1}};\\
\frac{(1-\alpha)\beta+r-\delta}{\beta (\delta-\alpha r)}\eta^{\frac{\alpha r-\delta}{(1-\alpha)\beta+r-\delta}}K^{-\frac{r+\beta}{(1-\alpha)\beta+r-\delta}}, & \eta > K^{\frac{1}{\alpha-1}}.
\end{cases}
	 \end{displaymath}
Simple calculations imply that
\begin{displaymath}
K=\begin{cases}
\left[\frac{(\eta+\beta w)}{(1-\alpha)(\beta+r)}\right]^{\alpha-1}, &w\geq\frac{\beta\eta(1-\alpha)+\eta(r-\delta)}{\beta(\delta-\alpha r)}; \\
\left[\frac{\beta w(\delta-\alpha r)}{(1-\alpha)\beta+r-\delta}\right]^{-\frac{(1-\alpha)\beta+r-\delta}{r+\beta}}\eta^{\frac{\delta-\alpha r}{\delta+r}}, &\textrm{otherwise}.
\end{cases}
\end{displaymath}

\textbf{Case 2.} Suppose that $\delta\geq r+(1-\alpha)\beta$. In this case, $Y^K_t=e^{-\beta t}(\eta\vee L_0^K)=e^{-\beta t}(\eta\vee K^{\frac{1}{\alpha-1}})$. Recalling the dynamics of level of satisfaction \eqref{dynamic y}, it is easy to check that $K=(\eta+\beta w)^{\alpha-1}$ and the optimal consumption $C^*$ should be such that $C^*_t=C^*_0=w$. That is, the agent will consume all his initial wealth at the original time.
\end{proof}


\section*{Acknowledgments}
\noindent Financial support by the German Research Foundation (DFG) through the Collaborative Research Centre 1283 ``Taming uncertainty and profiting from randomness and low regularity in analysis, stochastics and their applications'' is gratefully acknowledged by the authors.


\appendix
\renewcommand\thesection{Appendix A}
\section{ }
\label{app}
\renewcommand\thesection{A}

In this appendix we introduce the $g$-expectation and we provide its important properties.

Consider a filtered probability space $(\Omega,\mathcal{F}_T,(\mathcal{F}_t)_{t\in[0,T]},\P_0)$ satisfying the usual conditions of right-continuity and completeness and on which it is defined a $d$-dimensional Brownian motion $B=\{B_t\}_{t\in[0,T]}$. For any terminal value $X\in L^2(\mathcal{F}_T)$, the collection of all $\mathcal{F}_T$-measurable and square-integrable random variables, consider the following BSDE
\begin{displaymath}
Y_t^X=X+\int_t^T g(s,Z_s^X)\d s-\int_t^T Z^X_s\d B_s, \quad t \leq T.
\end{displaymath}

By the results in \cite{PP90}, under Assumptions (A1)-(A2) there exists a unique pair of solution $(Y^X,Z^X)$. We define the $g$-conditional expectation for $X$ as
\begin{displaymath}
\mathcal{E}^g_{t,T}[X]:=Y^X_t.
\end{displaymath}
For simplicity, we denote $\mathcal{E}_{0,T}^g[X]$ by $\mathcal{E}^g[X]$. The $g$-expectation coincides with a variational preference in the following sense (cf.\ \cite{EPQ}).

\begin{proposition}
\label{A1}
	Suppose that $g$ satisfies (A1)-(A3). For each $\omega\in\Omega$, $t\in[0,T]$ and $\theta\in\mathbb{R}^d$, let
	\begin{displaymath}
	f(\omega,t,\theta):=\sup_{z\in\mathbb{R}^d}(g(\omega,t,z)-z\cdot\theta)
	\end{displaymath}
	be the convex dual of $g$. Denote by $D_g$ be the collection of all progressively measurable processes $\{\xi_t\}_{t\in[0,T]}$ such that
	\begin{displaymath}
	\E\bigg[\int_0^T |f(s,\xi_s)|^2ds\bigg]<\infty.
	\end{displaymath}

Let $\tau$ be a stopping time satisfying $0\leq t\leq\tau\leq T$. For each $\mathcal{F}_\tau$-measurable and square integrable random variable $X$, we have the following representation
	\begin{displaymath}
	\mathcal{E}_{t,\tau}^g[X]=\essinf_{\xi\in D_g}\{\E^{\P^\xi}_t[X]+\alpha_{t,\tau}(\xi)\},
	\end{displaymath}
	where the probability measure $\P^\xi$ is defined on $(\Omega, \mathcal{F}_T)$ through
	\begin{displaymath}
	\frac{\d \P^\xi}{\d \P_0}:=\exp\Big(\int_0^T \xi_s dB_s-\frac{1}{2}\int_0^T \xi_s^2ds\Big),
	\end{displaymath}
 $\E^{\P^\xi}_t[\,\cdot\,]$ is the expectation under $\P^{\xi}$ conditioned on $\mathcal{F}_t$, and the penalty function is defined as
	\begin{displaymath}
	\alpha_{t,\tau}(\xi):=\E^{\P^\xi}_t\bigg[\int_t^\tau f(s,\xi_s)\d s\bigg].
	\end{displaymath}
\end{proposition}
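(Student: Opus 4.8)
The plan is to prove both inequalities by linearizing the backward equation through a Girsanov change of measure, and then to realize the $\essinf$ as an attained minimum. Throughout, let $(Y^X,Z^X)$ denote the unique $\mathcal{H}^2$-solution of the BSDE on $[t,\tau]$ with terminal datum $X$, so that $\mathcal{E}^g_{t,\tau}[X]=Y^X_t$. A preliminary observation, used repeatedly, is that (A2) forces the convex dual to be finite only on the ball of radius $\kappa$: choosing $z=-R\theta/|\theta|$ and letting $R\to\infty$ shows $f(\omega,s,\theta)=+\infty$ whenever $|\theta|>\kappa$. Consequently every $\xi\in D_g$ satisfies $|\xi_s|\leq\kappa$ a.e., the exponential density defining $\P^\xi$ is a true martingale belonging to every $L^p(\P_0)$, $\P^\xi\sim\P_0$, and $B^\xi_s:=B_s-\int_0^s\xi_r\,\d r$ is a $\P^\xi$-Brownian motion.

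First I would prove the inequality ``$\leq$''. Fix $\xi\in D_g$ and rewrite the BSDE under $\P^\xi$: substituting $\d B_s=\d B^\xi_s+\xi_s\,\d s$ gives
\begin{displaymath}
Y^X_t = X + \int_t^\tau \big(g(s,Z^X_s)-Z^X_s\cdot\xi_s\big)\d s - \int_t^\tau Z^X_s\,\d B^\xi_s.
\end{displaymath}
Since $|\xi|\leq\kappa$ and $Z^X\in\mathcal{H}^2(\P_0)$, a localization argument (along $\tau_n:=\inf\{s:\int_t^s|Z^X_r|^2\d r\geq n\}\wedge\tau$) together with the $L^p$-integrability of the density shows that $\int Z^X\d B^\xi$ is a genuine $\P^\xi$-martingale, so $\E^{\P^\xi}_t[\,\cdot\,]$ annihilates it. Applying the Fenchel inequality $g(s,z)-z\cdot\theta\leq f(s,\theta)$ with $\theta=\xi_s$, $z=Z^X_s$, we obtain $Y^X_t\leq\E^{\P^\xi}_t[X]+\alpha_{t,\tau}(\xi)$. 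As $\xi$ was arbitrary, $Y^X_t\leq\essinf_{\xi\in D_g}\{\E^{\P^\xi}_t[X]+\alpha_{t,\tau}(\xi)\}$.

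For the reverse inequality I would exhibit an optimizer attaining equality. Because $g(\omega,s,\cdot)$ is concave and Lipschitz, its superdifferential is a nonempty compact subset of $\{|\theta|\leq\kappa\}$, and the Fenchel equality $g(s,z)-z\cdot\theta=f(s,\theta)$ holds exactly for $\theta$ in this superdifferential. By the Kuratowski--Ryll-Nardzewski measurable selection theorem applied to the jointly measurable set-valued map $(\omega,s)\mapsto\partial g(\omega,s,Z^X_s(\omega))$, one can pick a progressively measurable selector $\xi^*_s$ with $g(s,Z^X_s)-Z^X_s\cdot\xi^*_s=f(s,\xi^*_s)$ and $|\xi^*_s|\leq\kappa$. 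The bound $|f(s,\xi^*_s)|\leq|g(s,0)|+2\kappa|Z^X_s|$, combined with (A1) and $Z^X\in\mathcal{H}^2$, yields $\xi^*\in D_g$. With this choice the Fenchel inequality in the previous step becomes an equality, giving $Y^X_t=\E^{\P^{\xi^*}}_t[X]+\alpha_{t,\tau}(\xi^*)$, so that the $\essinf$ is attained at $\xi^*$ and equals $Y^X_t$.

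I expect the main obstacle to be the two measure-theoretic points rather than the algebra: verifying that $\int Z^X\d B^\xi$ is a true $\P^\xi$-martingale (handled by the localization above, passing to the limit via the $\mathcal{H}^2$-bound and the uniform integrability granted by the $L^p$-density, with $Y^X_{\tau_n}\to X$ and $\int_t^{\tau_n}(g-Z^X\xi)\d s\to\int_t^\tau(g-Z^X\xi)\d s$ dominated in $L^1(\P^\xi)$), and constructing the progressively measurable selector $\xi^*$. The conditional, essential-infimum formulation adds no real difficulty, since $\xi^*$ realizes the infimum $\P_0$-a.s.; alternatively one verifies that the family $\{\E^{\P^\xi}_t[X]+\alpha_{t,\tau}(\xi):\xi\in D_g\}$ is downward directed, so the $\essinf$ is a decreasing limit along a sequence in $D_g$.
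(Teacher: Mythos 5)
Your proof is correct and is essentially the argument the paper itself relies on: the paper does not prove Proposition \ref{A1} but defers to \cite{EPQ}, whose proof is precisely your linearization of the BSDE under $\P^\xi$ via Fenchel's inequality for the ``$\leq$'' direction, followed by a measurable selection of a supergradient $\xi^*\in\partial g(\cdot,\cdot,Z^X)$ attaining equality. Your preliminary observation that $f(\omega,s,\theta)=+\infty$ for $|\theta|>\kappa$ (hence $|\xi|\leq\kappa$ for all $\xi\in D_g$, making Girsanov and the true-martingale verification routine) is exactly the standard reduction used there as well.
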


One of the most important properties of the classical conditional expectation is time-consistency, i.e.\ the so-called tower property. In fact, this property still holds for the $g$-conditional expectations. More precisely, we have the following proposition, whose details can be found in \cite{P97} and \cite{CHMP}.
\begin{proposition}\label{a21}
	Suppose that $g$ satisfies (A1)-(A3). The conditional $g$-expectation satisfies the following properties:
	\begin{description}
		\item[(1)] \textbf{Strict comparison:} if $X\leq Y$, then $\mathcal{E}^g_{t,T}[X]\leq \mathcal{E}_{t,T}^g[Y]$. Furthermore, if $\P_0(X<Y)>0$, then $\mathcal{E}^g_{t,T}[X]< \mathcal{E}_{t,T}^g[Y]$;

		\item[(2)] \textbf{Time consistency:} for any $0\leq s\leq t\leq T$, $\mathcal{E}^g_{s,T}[\mathcal{E}^g_{t,T}[X]]=\mathcal{E}^g_{s,T}[X]$;

		\item[(3)] \textbf{Concavity:} $\mathcal{E}_{t,T}^g[\,\cdot\,]$ is concave; i.e., for any $X,Y\in L^2(\mathcal{F}_T)$ and $\lambda\in[0,1]$, we have $\mathcal{E}_{t,T}^g[\lambda X+(1-\lambda)Y]\geq \lambda\mathcal{E}_{t,T}^g[X]+(1-\lambda)\mathcal{E}_{t,T}^g[Y]$;
		
		\item[(4)] \textbf{Fatou's lemma:}  Suppose that for any $n\in\mathbb{N}$, $\mathcal{E}^g[X_n]$ exists and $X_n\geq X$ (respectively, $X_n\leq X$), where $X\in L^2(\mathcal{F}_T)$. Then, we have
		$$
		\liminf_{n\rightarrow \infty}\mathcal{E}^g[X_n]\geq \mathcal{E}^g[\liminf_{n\rightarrow \infty} X_n] \quad (\text{respectively,}\,\, \limsup_{n\rightarrow \infty}\mathcal{E}^g[X_n]\leq \mathcal{E}^g[\limsup_{n\rightarrow \infty} X_n]).
		$$
	\end{description}
\end{proposition}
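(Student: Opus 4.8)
The plan is to obtain all four properties as consequences of the foundational BSDE results already available: existence and uniqueness under (A1)--(A2) (\cite{PP90}), the comparison and strict comparison theorems (Theorem 2.2 in \cite{EPQ}), and the a priori $L^2$-stability estimate (Proposition 2.1 in \cite{EPQ}). Throughout I write $(Y^X,Z^X)$ for the unique solution of the BSDE with terminal datum $X$ and driver $g$, so that $\mathcal{E}^g_{t,T}[X]=Y^X_t$. Property (1) is then immediate: applying the comparison theorem to the two BSDEs with terminal values $X\leq Y$ and common driver $g$ yields $Y^X_t\leq Y^Y_t$, while under $\P_0(X<Y)>0$ the strict comparison theorem of \cite{EPQ} upgrades this to a strict inequality.

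For (2) I would invoke the flow property of BSDE solutions. Fix $s\leq t$ and let $(Y^X,Z^X)$ solve the equation on $[0,T]$; restricting to $[s,t]$ shows that $(Y^X,Z^X)|_{[s,t]}$ solves the BSDE on $[s,t]$ with driver $g$ and terminal condition $Y^X_t=\mathcal{E}^g_{t,T}[X]$. By uniqueness of solutions on $[s,t]$ this is precisely the solution defining $\mathcal{E}^g_{s,t}[\mathcal{E}^g_{t,T}[X]]$, whence $\mathcal{E}^g_{s,T}[\mathcal{E}^g_{t,T}[X]]=Y^X_s=\mathcal{E}^g_{s,T}[X]$. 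For (3) I would use the concavity of the driver (A3). Given $X,Y$ and $\lambda\in[0,1]$, set $\bar Y:=\lambda Y^X+(1-\lambda)Y^Y$ and $\bar Z:=\lambda Z^X+(1-\lambda)Z^Y$; by linearity $\bar Y$ solves the BSDE with terminal datum $\lambda X+(1-\lambda)Y$ and generator $\lambda g(s,Z^X_s)+(1-\lambda)g(s,Z^Y_s)$, which by (A3) is dominated pointwise by $g(s,\bar Z_s)$. Comparing these two generators along the solution via the comparison theorem gives $\bar Y_t\leq Y^{\lambda X+(1-\lambda)Y}_t$, i.e.\ $\lambda\mathcal{E}^g_{t,T}[X]+(1-\lambda)\mathcal{E}^g_{t,T}[Y]\leq\mathcal{E}^g_{t,T}[\lambda X+(1-\lambda)Y]$.

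For (4) I treat the case $X_n\geq X$, the other being symmetric. Put $\tilde X_m:=\inf_{n\geq m}X_n$, so that $X\leq\tilde X_m\uparrow\liminf_n X_n$; monotonicity from (1) gives $\inf_{n\geq m}\mathcal{E}^g[X_n]\geq\mathcal{E}^g[\tilde X_m]$. The decisive step is to pass to the limit in $m$: since $\tilde X_m\uparrow\liminf_n X_n$ with $X\leq\tilde X_m\leq\liminf_n X_n\in L^2(\mathcal{F}_T)$, dominated convergence yields $\tilde X_m\to\liminf_n X_n$ in $L^2$, and the a priori estimate (Proposition 2.1 in \cite{EPQ}) then gives $\mathcal{E}^g[\tilde X_m]\to\mathcal{E}^g[\liminf_n X_n]$; letting $m\to\infty$ produces $\liminf_n\mathcal{E}^g[X_n]\geq\mathcal{E}^g[\liminf_n X_n]$. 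I expect this continuity argument to be the only nonroutine point: one must verify that the truncations $\tilde X_m$ stay in $L^2$ and converge there so that the stability estimate applies, which is exactly where the integrability hypotheses on $X$ and the $X_n$ are used. Properties (1)--(3), by contrast, are essentially one-line deductions from the comparison theorem and the structure (A3) of the driver.
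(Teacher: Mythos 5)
The paper offers no proof of this proposition at all---it defers to \cite{P97} and \cite{CHMP}---and your derivation follows exactly the standard toolkit those references use: comparison and strict comparison for (1), flow property plus uniqueness for (2), concavity of the driver fed into the comparison theorem for (3), and monotone truncation combined with the a priori $L^2$-stability estimate for (4). Parts (1) and (3) are correct as written. In (4), note that you assert $\liminf_n X_n\in L^2(\mathcal{F}_T)$, which does \emph{not} follow from the stated hypotheses (take $X_n\equiv n$ and $X=0$); it is an implicit assumption needed for the right-hand side $\mathcal{E}^g[\liminf_n X_n]$ to be defined at all, and you should say so explicitly rather than fold it into the dominated-convergence step. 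Granting it, your truncation-plus-stability argument for (4) is sound, and the verification that the $\tilde X_m$ lie in $L^2$ (via $X\leq \tilde X_m\leq X_m$) works.

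The one step that genuinely needs repair is the final ``whence'' in (2). Your flow argument correctly proves $\mathcal{E}^g_{s,t}\big[\mathcal{E}^g_{t,T}[X]\big]=\mathcal{E}^g_{s,T}[X]$, but the proposition's outer operator is $\mathcal{E}^g_{s,T}$, i.e.\ the BSDE run over all of $[s,T]$ with the $\mathcal{F}_t$-measurable datum $\mathcal{E}^g_{t,T}[X]$ posed as terminal condition at time $T$. Identifying $\mathcal{E}^g_{s,t}[\xi]$ with $\mathcal{E}^g_{s,T}[\xi]$ for $\xi\in L^2(\mathcal{F}_t)$ requires constant preservation, i.e.\ (A4): only then does $(\xi,0)$ solve the BSDE on $[t,T]$, so that uniqueness collapses the two operators. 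Under (A1)--(A3) alone the identity can fail: for $g(\omega,t,z)=f(t)$ with $f$ square-integrable and not a.e.\ zero (progressively measurable, Lipschitz, concave, but violating (A4)) one computes $\mathcal{E}^g_{t,T}[X]=\E_t[X]+\int_t^T f(r)\,\d r$, hence
\begin{equation*}
\mathcal{E}^g_{s,T}\big[\mathcal{E}^g_{t,T}[X]\big]-\mathcal{E}^g_{s,T}[X]=\int_t^T f(r)\,\d r\neq 0.
\end{equation*}
So either invoke (A4) explicitly at this point---which the drivers $g,h$ in the body of the paper do satisfy, and which the paper itself singles out immediately after the proposition as the condition allowing the second time index to be dropped---or restate (2) with the outer operator $\mathcal{E}^g_{s,t}$. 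With that one-line repair your proof of (2), and hence of the whole proposition, is complete.
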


If we assume, additionally, that the function $g$ satisfies (A4), it is easy to check that for any $X\in L^2(\mathcal{F}_{T_1})\subset L^2(\mathcal{F}_{T_2})$, where $T_1\leq T_2$, we also have
\begin{displaymath}
\mathcal{E}^g_{t,T_1}[X]=\mathcal{E}_{t,T_2}^g[X].
\end{displaymath}
In this case, we denote $\mathcal{E}^g_{t,T}[X]$ by $\mathcal{E}^g_{t}[X]$. The advantage of using $g$ which satisfies also condition (A4) lies in the fact that it preserves almost all properties as the classical expectation, with the exception of linearity.

\begin{proposition}\label{a22}
	Suppose that $g$ satisfies (A1), (A2) and (A4). The conditional $g$-expectation satisfies the following:
	\begin{description}
\item[(1)] \textbf{Translation invariance:} if $Z\in L^2(\mathcal{F}_t)$, then for all $X\in L^2(\mathcal{F}_T)$, $\mathcal{E}^g_t[X+Z]=\mathcal{E}^g_t[X]+Z$;

		\item[(2)] \textbf{Local property:} for an event $A\in\mathcal{F}_t$, we have $\mathcal{E}^g_t[X \mathds{1}_A+Y \mathds{1}_{A^c}]=\mathcal{E}^g_t[X]\mathds{1}_A+\mathcal{E}^g_t[Y]\mathds{1}_{A^c}$;
		
		\item[(3)] \textbf{Constant preserving:} if $X\in L^2(\mathcal{F}_t)$, we have $\mathcal{E}^g_t[X]=X$.
	\end{description}
\end{proposition}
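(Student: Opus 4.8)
The plan is to reduce all three identities to \emph{uniqueness} of the BSDE solution guaranteed by (A1)--(A2) (Pardoux--Peng, \cite{PP90}): for each claim I exhibit an explicit candidate pair $(\bar Y,\bar Z)$ on $[t,T]$, check that it solves the BSDE whose terminal datum is the argument appearing on the left-hand side, and then conclude $\bar Y\equiv Y$ on $[t,T]$ by uniqueness; evaluating at $s=t$ delivers the asserted equality. Throughout I use that, under (A4), the value $\mathcal{E}^g_{t,T}[X]$ is independent of the horizon, so that writing $\mathcal{E}^g_t[X]$ is meaningful, as recorded just before the statement. I also check in each case that the candidate is adapted on $[t,T]$ and square-integrable, which is immediate because the multipliers involved (either $Z\in L^2(\mathcal{F}_t)$, or the indicators $\mathds{1}_A,\mathds{1}_{A^c}$) are $\mathcal{F}_t$-measurable, hence $\mathcal{F}_s$-measurable for every $s\ge t$.

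\textbf{Constant preserving.} For $X\in L^2(\mathcal{F}_t)$ I take the candidate $\bar Y_s\equiv X$ and $\bar Z_s\equiv 0$ on $[t,T]$. Assumption (A4) gives $g(s,0)=0$, so both the driver term and the stochastic integral vanish and $\bar Y_s = X + \int_s^T g(r,0)\,\d r - \int_s^T 0\,\d B_r = X$ for all $s\in[t,T]$; adaptedness holds since $X$ is $\mathcal{F}_t$-measurable. Uniqueness then forces $\mathcal{E}^g_t[X]=\bar Y_t=X$. This is precisely the step where (A4) is indispensable.

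\textbf{Translation invariance.} Let $(Y^X,Z^X)$ solve the BSDE with terminal value $X$ and let $Z\in L^2(\mathcal{F}_t)$. I propose $\bar Y_s:=Y^X_s+Z$ and $\bar Z_s:=Z^X_s$ for $s\in[t,T]$. Because the driver depends only on its control argument and \emph{not} on $Y$, while $Z$ is $\mathcal{F}_t$-measurable and thus sits outside the two integrals unchanged, one computes $\bar Y_s=(X+Z)+\int_s^T g(r,\bar Z_r)\,\d r-\int_s^T \bar Z_r\,\d B_r$. Hence $(\bar Y,\bar Z)$ solves the BSDE with terminal value $X+Z$, and uniqueness at $s=t$ yields $\mathcal{E}^g_t[X+Z]=Y^X_t+Z=\mathcal{E}^g_t[X]+Z$.

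\textbf{Local property.} For $A\in\mathcal{F}_t$, let $(Y^X,Z^X)$ and $(Y^Y,Z^Y)$ solve the BSDEs with terminal data $X$ and $Y$, and set $\bar Y_s:=Y^X_s\mathds{1}_A+Y^Y_s\mathds{1}_{A^c}$, $\bar Z_s:=Z^X_s\mathds{1}_A+Z^Y_s\mathds{1}_{A^c}$. Since $\mathds{1}_A,\mathds{1}_{A^c}$ are $\mathcal{F}_t$-measurable they pass inside both integrals on $[s,T]\subseteq[t,T]$, and the key pointwise identity $g(r,\bar Z_r)=g(r,Z^X_r)\mathds{1}_A+g(r,Z^Y_r)\mathds{1}_{A^c}$ holds because on $A$ one has $\bar Z_r=Z^X_r$ and on $A^c$ one has $\bar Z_r=Z^Y_r$. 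Combining these, $(\bar Y,\bar Z)$ solves the BSDE with terminal value $X\mathds{1}_A+Y\mathds{1}_{A^c}$, and uniqueness at $s=t$ gives $\mathcal{E}^g_t[X\mathds{1}_A+Y\mathds{1}_{A^c}]=\mathcal{E}^g_t[X]\mathds{1}_A+\mathcal{E}^g_t[Y]\mathds{1}_{A^c}$.

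\textbf{Main obstacle.} None of the three steps is genuinely hard once this guess-and-verify-by-uniqueness template is in place; the only points requiring care are the measurability and integrability of the candidate processes on $[t,T]$ and, in the last two parts, the algebraic check that the candidate driver $g(r,\bar Z_r)$ equals the claimed combination. The latter hinges entirely on $g$ being a function of the control alone and on the multipliers being $\mathcal{F}_t$-measurable, which is exactly what makes the identities hold for conditioning at the intermediate time $t$.
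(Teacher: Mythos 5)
Your proposal is correct: each candidate pair is indeed adapted and square-integrable, the verification computations are right (including the observation that the indicator identity for the driver in the local property holds pointwise, and that translation invariance uses the driver's independence of the $y$-variable, which is built into the paper's setting (A1)--(A4)), and uniqueness from \cite{PP90} on $[t,T]$ closes each claim. The paper itself states this proposition without proof, deferring to \cite{P97} and \cite{CHMP}, and your guess-and-verify-by-uniqueness argument is precisely the standard one given in those references, with the correct identification of (A4) as the ingredient needed both for constant preservation and for the horizon-independence that makes the notation $\mathcal{E}^g_t[\,\cdot\,]$ meaningful.
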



\renewcommand\thesection{Appendix B}
\section{ }
\label{appB}
\renewcommand\thesection{B}

In this appendix we provide the proof of some technical result needed in the paper.

\begin{lemma}
\label{l1}
	For any bounded progressively measurable processes $\xi$ and $\sigma$, and for any constant $p>0$, the random variable $\exp\big(p(\int_0^T \xi_s\d s+\int_0^T \sigma_s\d B_s)\big)$ is $\P_0$-integrable.
\end{lemma}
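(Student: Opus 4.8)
The plan is to split the exponent into its drift part and its stochastic part, handle the drift trivially, and treat the stochastic integral via a Dol\'eans--Dade exponential. First I would record the bounds: since $\xi$ and $\sigma$ are bounded, there are constants $K_1,K_2>0$ with $|\xi_s|\leq K_1$ and $|\sigma_s|\leq K_2$ for all $s$. The drift then contributes only a deterministic factor, because
\begin{displaymath}
\exp\Big(p\int_0^T \xi_s\,\d s\Big)\leq \exp\big(pK_1 T\big),
\end{displaymath}
so it suffices to prove that $\exp\big(p\int_0^T \sigma_s\,\d B_s\big)$ is $\P_0$-integrable and then multiply the two estimates.

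For the stochastic integral I would introduce the exponential process
\begin{displaymath}
M_t:=\exp\Big(p\int_0^t \sigma_s\,\d B_s-\tfrac{p^2}{2}\int_0^t \sigma_s^2\,\d s\Big),\qquad t\in[0,T].
\end{displaymath}
Because $\int_0^T \sigma_s^2\,\d s\leq K_2^2 T$ is bounded, Novikov's condition $\E\big[\exp\big(\tfrac12\int_0^T (p\sigma_s)^2\,\d s\big)\big]<\infty$ holds trivially, so $M$ is a genuine $\P_0$-martingale and $\E[M_T]=M_0=1$. Rewriting
\begin{displaymath}
\exp\Big(p\int_0^T \sigma_s\,\d B_s\Big)=M_T\,\exp\Big(\tfrac{p^2}{2}\int_0^T \sigma_s^2\,\d s\Big)\leq M_T\,\exp\Big(\tfrac{p^2}{2}K_2^2 T\Big),
\end{displaymath}
and taking expectations gives $\E\big[\exp\big(p\int_0^T \sigma_s\,\d B_s\big)\big]\leq \exp\big(\tfrac{p^2}{2}K_2^2 T\big)<\infty$. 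Combining this with the deterministic drift bound yields the asserted integrability.

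There is no genuine obstacle here: the boundedness of $\sigma$ is precisely what trivializes Novikov's condition and promotes $M$ from a local martingale to a true martingale. The only point requiring a little care is exactly that promotion, since without it one would only obtain the supermartingale inequality $\E[M_T]\leq 1$, which would still suffice for the upper bound but should be justified cleanly; invoking Novikov (or equivalently the boundedness of the quadratic variation) settles this immediately.
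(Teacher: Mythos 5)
Your proof is correct, but it takes a genuinely more direct route than the paper. The paper writes the integrand as
$\exp\big(\int_0^T p\sigma_t \,\d B_t-\int_0^T p^2\sigma^2_t\,\d t\big)\cdot\exp\big(\int_0^T p^2\sigma^2_t\,\d t+\int_0^T p\xi_t \,\d t\big)$
and then applies the Cauchy--Schwarz inequality: squaring the first factor produces exactly the Dol\'eans--Dade exponential of $2p\sigma$, which has expectation one by Novikov, while the second factor is bounded by a deterministic constant since $\sigma$ and $\xi$ are bounded. You instead factor out the exact Dol\'eans--Dade exponential $M_T=\mathcal{E}(p\sigma)_T$ and bound the correction term $\exp\big(\tfrac{p^2}{2}\int_0^T\sigma_s^2\,\d s\big)$ and the drift term pointwise, which avoids Cauchy--Schwarz altogether and, as you correctly note, would even go through with only the supermartingale inequality $\E[M_T]\leq 1$ (a nonnegative local martingale is a supermartingale), so Novikov is not strictly needed in your version. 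What the paper's Cauchy--Schwarz splitting buys is robustness: it is the pattern one would use if $\sigma$ were not bounded but merely had suitable exponential moments, in which case no pointwise bound on the correction factor is available and the integrability must be shared between the two factors. In the present bounded setting, both arguments are equally valid and yours is the leaner one.
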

\emph{Proof.}
It is easy to check that
\begin{align*}
&\E\bigg[\exp\bigg(p\big(\int_0^T\sigma_t \d B_t+\int_0^T\xi_t \d t\big)\bigg)\bigg]\\
=&\E\bigg[\exp\bigg(\int_0^T p\sigma_t \d B_t-\int_0^T p^2\sigma^2_t\d t\bigg)\exp\bigg(\int_0^Tp^2\sigma^2_t\d t+\int_0^Tp\xi_t \d t\bigg)\bigg]\\
\leq &\bigg(\E\bigg[\exp\big(\int_0^T 2p\sigma_t \d B_t-\frac{1}{2}\int_0^T (2p\sigma_t)^2\d t\big)\bigg]\bigg)^{1/2}\bigg(\E\bigg[\exp\big(\int_0^T2p^2\sigma^2_t\d t+\int_0^T2p\xi_t \d t\big)\bigg]\bigg)^{1/2}\\
=&\bigg(\E\bigg[\exp\big(\int_0^T2p^2\sigma^2_t\d t+\int_0^T2p\xi_t \d t\big)\bigg]\bigg)^{1/2}\leq \text{const.},
\end{align*}
where we use the fact that $\{\exp\big(\int_0^t 2p\sigma_s \d B_s-\frac{1}{2}\int_0^t (2p\sigma_s)^2\d s\big)\}_{t\in[0,T]}$ is a martingale by Novikov's condition. The proof is then complete.
\vspace{0.25cm}

\emph{Proof of Lemma \ref{lem:suffcond}}
\vspace{0.15cm}

By the power growth condition and Lemma \ref{ocl1}, it is easy to check that
	\begin{displaymath}
	U(0)\leq U(C)\leq K\int_0^T (1+|Y_t^C|^\alpha)\d t\leq K(1+C_T^\alpha).
	\end{displaymath}
	Therefore, it suffices to show that the family $\{C_T^\alpha, C\in\mathcal{A}_h(w)\}$ is uniformly $\P_0$-square-integrable. To accomplish that recall Proposition \ref{A1} in \ref{app} (in particular see the definition of the set $D_h$), and for any $p> 2$ with $\alpha p <1$, by the H\"{o}lder inequality, we have
	\begin{equation}
	\label{eq:CT}
	\E[C_T^{\alpha p}]\leq \E\bigg[C_T \frac{\d \P^\xi}{\d \P_0}\bigg]^{\alpha p}\E\bigg[\bigg(\frac{\d \P^\xi}{\d \P_0}\bigg)^{\frac{-\alpha p}{1-\alpha p}}\bigg]^{1-\alpha p},
	\end{equation}
	where $\xi\in D_h$ and $|\xi|\leq \kappa$. By Lemma \ref{l1}, $\E\bigg[\bigg(\frac{\d \P^\xi}{\d \P_0}\bigg)^{\frac{-\alpha p}{1-\alpha p}}\bigg]\leq \text{const.}$, and letting $\ell$ be the convex dual of $h$, by Proposition \ref{A1} for some $K>0$ we have
	\begin{align}
	\label{eq:CT2}
	\E\bigg[C_T \frac{\d \P^\xi}{\d \P_0}\bigg]&=\E^{\P^\xi}[C_T]\leq K \E^{\P^\xi}\bigg[\int_0^T \gamma_t \d C_t\bigg] \leq K\bigg(\widetilde{\mathcal{E}}^h\bigg[\int_0^T \gamma_t\d C_t\bigg]+\E^{\P^\xi}\bigg[\int_0^T \ell(s,\xi_s)\d s\bigg]\bigg),
	\end{align}
where, in order to obtain the first inequality, we have used that the interest rate is bounded.

Also, noticing that $\xi\in D_h$, by Lemma \ref{l1}, it follows that
    \begin{equation}\label{bound f}
    \E^{\P^\xi}\bigg[\int_0^T \ell(s,\xi_s)\d s\bigg]=\E\bigg[\frac{\d\P^\xi}{\d\P_0}\int_0^T \ell(s,\xi_s)\d s\bigg]\leq \bigg(\E\bigg[\bigg(\frac{\d\P^\xi}{\d\P_0}\bigg)^2\bigg]\bigg)^{1/2}\bigg(T\E\bigg[\int_0^T \ell^2(s,\xi_s)\d s\bigg]\bigg)^{1/2}<\infty.
    \end{equation}
Feeding the latter back into \eqref{eq:CT2} and using \eqref{eq:CT} we conclude that the family $\{C^\alpha_T,C\in\mathcal{A}_h(w)\}$ is $p$-integrable under $\P_0$, thus leading to the desired result.
\vspace{0.25cm}

\emph{Proof of Lemma \ref{l71}}
\vspace{0.15cm}

By the strong Markov property and a change of variable, we have
	\begin{align*}
		&\E_\tau\bigg[\int_\tau^\infty \varepsilon^{\xi^1}_t \partial_yu\bigg(t,\sup_{\tau\leq v\leq t}\bigg\{L_v \exp\big(-\int_v^t\beta_sds\big)\bigg\}\bigg)\theta_{t,\tau}\d t\bigg]\\
		=&\E_\tau\bigg[\int_\tau^\infty \varepsilon^{\xi^1}_t e^{-\delta t}\beta e^{-\beta(t-\tau)}\inf_{\tau\leq v\leq t}\bigg\{Ke^{(\delta-r)v}e^{-\beta(\alpha-1)(t-v)}\frac{\varepsilon^{\xi^2}_v}{\varepsilon^{\xi^1}_v}\bigg\}\d t\bigg]\\
		=&K\beta e^{-r\tau} \E_\tau\bigg[\int_0^\infty e^{-(\delta+\alpha\beta)t} \inf_{0\leq v\leq t}\bigg\{e^{(\delta+\beta(\alpha-1)-r)v}\varepsilon^{\xi^2}_{v+\tau}\frac{\varepsilon^{\xi^1}_{t+\tau}}
{\varepsilon^{\xi^1}_{v+\tau}}\bigg\}\d t\bigg]\\
		=&K\beta e^{-r\tau} \E\bigg[\int_0^\infty e^{-(\delta+\alpha\beta)t} \inf_{0\leq v\leq t}\bigg\{e^{(\delta+\beta(\alpha-1)-r)v}\varepsilon^{x_2,\xi^2}_{v}
\frac{\varepsilon^{x_1,\xi^1}_{t}}{\varepsilon^{x_1,\xi^1}_{v}}\bigg\}\d t\bigg]\bigg|_{x_1=\varepsilon^{\xi^1}_\tau,x_2=\varepsilon^{\xi^2}_\tau}\\
		=&K\beta e^{-r\tau} \E\bigg[\int_0^\infty e^{-(\delta+\alpha\beta)t} \inf_{0\leq v\leq t}\bigg\{e^{(\delta+\beta(\alpha-1)-r)v}\varepsilon^{\xi^2}_{v}\frac{\varepsilon^{\xi^1}_{t}}{\varepsilon^{\xi^1}_{v}}\bigg\}\d t\bigg]\varepsilon^{\xi^2}_\tau.
	\end{align*}
	Hence, the result follows.


\end{document}